\documentclass[final,hidelinks,onefignum,onetabnum]{siamart220329}
\usepackage{lipsum}
\usepackage{amsfonts}
\usepackage{amssymb}
\usepackage{graphicx}
\usepackage{epstopdf}
\usepackage{subfigure}
\ifpdf
\DeclareGraphicsExtensions{.eps,.pdf,.png,.jpg}
\else
\DeclareGraphicsExtensions{.eps}
\fi

\usepackage{booktabs}
\usepackage{stmaryrd}
\SetSymbolFont{stmry}{bold}{U}{stmry}{m}{n}
\usepackage{bm}
\usepackage{caption}
\usepackage{multirow}
\usepackage{enumitem}
\setlist{leftmargin=5.5mm}
\usepackage{geometry}

\usepackage{hyperref}
\usepackage{cleveref}

\newsiamthm{assumption}{Assumption}

\renewcommand{\bf}{\textbf}

\newtheorem{thm}{Theorem}[section]

\newtheorem{lem}{Lemma}[section]

\newtheorem{rem}{Remark}[section]
\newtheorem{prop}[thm]{Proposition}

\newcommand{\bsub}{\begin{subequations}}
\newcommand{\esub}{\end{subequations}$\!$}

\newtheorem{example}{\textbf{Example}}[section]
\numberwithin{equation}{section}

\newsiamremark{remark}{Remark}
\newsiamremark{hypothesis}{Hypothesis}
\crefname{hypothesis}{Hypothesis}{Hypotheses}
\newsiamthm{claim}{Claim}
\newsiamremark{exmp}{Example}
\usepackage{algorithm}
\usepackage{algorithmic}

\makeatletter
\newcommand{\algorithmbreak}{\ifdim\pagegoal=\maxdimen \else\vfil\penalty-1000\vfilneg\fi}
\makeatother



\headers{}{}

\title{A linear, fully decoupled, and unconditionally energy-stable SAV-FEM for the Cahn--Hilliard--Navier--Stokes model
}
\author{Jinting Yang\thanks{School of Mathematics and Information Science, Henan Polytechnic University, Jiaozuo, 454003, Henan, P.R.China, \email{jtyang@hpu.edu.cn}).}
\and Nianyu Yi\thanks{Hunan Key Laboratory for Computation and Simulation in Science and Engineering; School of Mathematics and Computational Science, Xiangtan University, Xiangtan 411105, P.R.China (\email{yinianyu@xtu.edu.cn}).}
\and Peimeng Yin\thanks{Corresponding author. Department of Mathematical Sciences, The University of Texas at El Paso, El Paso, TX 79968, USA (\email{pyin@utep.edu}).}} %\footnotemark[3]

\usepackage{amsopn}

\ifpdf
\hypersetup{
  pdftitle={},
  pdfauthor={}
}
\fi

\usepackage{comment}
\usepackage{makecell}

\allowdisplaybreaks

\begin{document}

\maketitle
% REQUIRED 
\begin{abstract}
In this paper, we develop a linear, fully decoupled, and unconditionally energy-stable fully discrete finite element scheme for the Cahn--Hilliard--Navier--Stokes (CHNS) system by employing the scalar auxiliary variable (SAV) approach. 
Unlike existing SAV-based formulations that typically introduce multiple auxiliary variables or additional techniques to handle different nonlinearities, we introduce only one scalar auxiliary variable together with a novel update of the auxiliary variable to reformulate all nonlinear terms arising from the Cahn--Hilliard and Navier--Stokes equations, yielding an equivalent reformulation of the original CHNS system. 
An implicit--explicit (IMEX) Euler scheme is applied for temporal discretization, where the linear terms are treated implicitly and the nonlinear terms explicitly, while a finite element method is adopted for spatial discretization. 
The resulting fully discrete scheme can be efficiently decomposed into two linear subproblems and one scalar quadratic algebraic equation, which significantly simplifies the implementation. 
Furthermore, we prove that the proposed scheme satisfies an unconditional discrete energy dissipation law and establish its stability with respect to several relevant norms. 
Optimal-order $L^2$ error estimates are also derived for the fully discrete finite element approximation. 
Finally, a series of numerical experiments are presented to verify the theoretical results and demonstrate the efficiency of the proposed method.
\end{abstract}

	% In this paper, we carry out the optimal error estimates for the linear, decoupled, and unconditional energy-stable fully discrete finite element scheme of the Cahn-Hilliard-Navier-Stokes (CHNS) model based on scalar auxiliary variable (SAV). Firstly, the corresponding equivalent system of CHNS model with SAV is established. Secondly, the Euler implicit/explicit scheme and finite element method are used for the temporal and spatial discretizations respectively. In this way, the system is decomposed into two linear equations and a quadratic algebraic equation with one variable, which can be solved effectively. And the discrete unconditional energy dissipation, the stability of numerical solutions under different norms and the optimal error estimates for our proposed numerical schemes are also presented. Finally, we provide numerical experiments to verify our theoretical results.

% REQUIRED
\begin{keywords}
Cahn-Hilliard-Navier-Stokes equations; SAV approach; IMEX Euler scheme; Finite element method; Error estimates.
\end{keywords}

%REQUIRED
\begin{MSCcodes}
35K58, 65F55, 65M60, 65Y20
\end{MSCcodes}

% \noindent{
% \color{blue} To do list:\\
% 1. Name and affiliation. \YJ{Fixed.} \\
% 2. Full-rank computational complexity.\YJ{It can be divided into two parts: the linear part and the nonlinear part. For the full-rank format, the computational complexity of the linear part is $\mathcal{O}(\max(m,n)mn )$, and the computational complexity of the nonlinear part is $\mathcal{O}(mn)$. For the low-rank format, the computational complexity of the linear part is $\mathcal{O}(\max(m,n)(m+n)r)$, and the computational complexity of the nonlinear part is $\mathcal{O}((m+n)r^4)$.}\\
% 3. Figure 8. \YJ{Fixed.}\\
% 4. Check all the referenes. Table [34] as an example, (SAV) should be capital letter.
% \YJ{Fixed.}}
\section{Introduction}
	In this paper, we propose and analyze an SAV-based implicit--explicit (IMEX) Euler finite element method for the following CHNS system
\begin{subequations}\label{chnsequ}
	\begin{align}
		\phi_t-M\triangle \mu+\nabla\cdot(\mathbf{u}\phi) = &0,\qquad \text{in}\ \ \Omega\times(0,T],\label{equ1}\\
		\mu+\lambda\triangle\phi-g(\phi) = &0,\qquad \text{in}\ \ \Omega\times(0,T],\label{equ2}\\
		\mathbf{u}_t+(\mathbf{u}\cdot\nabla) \mathbf{u}+\nabla p-\nu\triangle \mathbf{u}-\mu\nabla\phi = &0
		,\qquad \text{in}\ \ \Omega\times(0,T],\label{equ3}\\
		\nabla\cdot \mathbf{u} = &0,\qquad \text{in}\ \ \Omega\times(0,T],\label{equ4}
	\end{align}
\end{subequations}
subject to the initial conditions
\begin{eqnarray}\label{ic}
	\mathbf{u}|_{t=0}=\mathbf{u}_0,\ \ 	\phi|_{t=0}=\phi_0,\ \  \text{in}\ \ \Omega,
\end{eqnarray}
and the boundary conditions
\begin{eqnarray}\label{bc}
	\nabla \phi\cdot\mathbf{n}=\nabla \mu\cdot\mathbf{n}=0,\ \ \mathbf{u} = \mathbf{0},\qquad \text{on}\ \ \partial\Omega\times(0,T].
\end{eqnarray}
% Here, $\Omega\subset\mathbb{R}^d,d=2,3$ is a polygonal domain with boundary $\partial\Omega$, and $g(\phi) = F^{'}(\phi)$, $F(\phi)=\frac{1}{4\varepsilon^2}(1-\phi^2)^2$ is the nonlinear free energy density with $\varepsilon$ representing the interface width, $ M, \lambda, \nu>0 $ represent the mobility constant, mixing coefficient and fluid viscosity, respectively. $\mathbf{u},p,\phi,\mu$ are the velocity, pressure, phase function and chemical potential, respectively. $\mathbf{n}$ denotes unit outward normal vector on $ \partial\Omega $. 
Here, $\Omega\subset\mathbb{R}^d$ ($d=2,3$) is a polygonal domain with boundary $\partial\Omega$, and $\mathbf{n}$ denotes the unit outward normal vector on $\partial\Omega$. The nonlinear term is given by $g(\phi)=F'(\phi)$, where
$F(\phi)=\frac{1}{4\varepsilon^2}(1-\phi^2)^2$
is the Ginzburg--Landau free-energy density, with $\varepsilon>0$ representing the interfacial width parameter. The positive constants $M$, $\lambda$, and $\nu$ denote the mobility, mixing coefficient, and fluid viscosity, respectively. The unknowns $\mathbf{u}$, $p$, $\phi$, and $\mu$ represent the velocity field, pressure, phase variable, and chemical potential, respectively.

% We can easily get the following
% energy dissipation law of the initial boundary problem \eqref{chnsequ}-\eqref{bc},
% \begin{eqnarray}\label{en}
% 	\frac{dE(\phi,\textbf{u})}{dt}=-M\|\nabla\mu\|_0^2-\nu\|\nabla \textbf{u}\|_0^2,
% \end{eqnarray}
% where $E(\phi,\textbf{u})=\int_\Omega\{\frac{1}{2}(|\textbf{u}|^2+|\nabla \phi|^2)+F(\phi)\}dx$ is the total energy, and $ \|\cdot\|_0 $ is defined as $ L^2$ norm.
% For the convenience of the following theoretical analysis, we set the parameters $ M=\lambda=\nu=1 $.

By taking suitable inner products of \eqref{chnsequ} and applying the boundary conditions \eqref{bc}, one readily obtains the following energy dissipation law for the initial-boundary value problem \eqref{chnsequ}--\eqref{bc}:
\begin{eqnarray*}
	\frac{dE(\phi,\textbf{u})}{dt}=-M\|\nabla\mu\|_0^2-\nu\|\nabla \textbf{u}\|_0^2,
\end{eqnarray*}
where $E(\phi,\textbf{u})=\int_\Omega\{\frac{1}{2}(|\textbf{u}|^2+|\nabla \phi|^2)+F(\phi)\}dx$ 
denotes the total free energy, and $\|\cdot\|_0$ denotes the $L^2(\Omega)$ norm for scalar functions and vector fields, with the squared norm defined by integrating the sum of squares of all components over $\Omega$. 
For convenience in the subsequent analysis, we assume throughout the paper that
$ M=\lambda=\nu=1 $.
%Under this assumption, the energy law \eqref{en} takes the simplified form stated above.

As a coupled nonlinear system, \eqref{chnsequ}--\eqref{bc} presents several challenges in the design of effective numerical schemes:
(i) the nonlinear coupling between the velocity and phase variables through the convective term in the phase equation and the stress term in the momentum equation;
(ii) the nonlinear energy potential and the stiffness induced by the small interfacial width parameter $\varepsilon$ in the phase equation;
(iii) the requirement that the resulting discrete scheme, even after decoupling the variables, preserves the energy dissipation law.

Constructing efficient numerical schemes for the CHNS system is highly challenging, particularly when one seeks to preserve a discrete energy dissipation law at the numerical level. Over the past two decades, a variety of numerical methods have been developed for the CHNS system. Among the most widely used approaches for temporal discretization are the linear stabilization method \cite{CCS2017,LQT2016,SY2010,SY2015,YY2017}, the convex-splitting method \cite{HW2015,HWW2009,SWWW2012,WWL2009}, the invariant energy quadratization (IEQ) method \cite{CLYY2025,CLYY2026,YZH2018,YZW2017,ZWY2017}, and the scalar auxiliary variable (SAV) method \cite{SX2018,SXY2018,YYC2024,YY2024,ZY2022}.
Shen and Yang \cite{SY2011} developed several effective time-discrete schemes for two-phase incompressible flows with variable densities and viscosities, and established the corresponding discrete energy dissipation laws. 
%Subsequently, they proposed two classes of decoupled, unconditionally energy-stable schemes for the CHNS system based on stabilization and convex-splitting techniques. At each time step, these schemes require only the solution of a sequence of elliptic problems, including a pressure Poisson equation \cite{SY2015}. 
Combining the convex-splitting method for the Cahn--Hilliard equation with a pressure-projection method for the Navier--Stokes equations, Han and Wang \cite{HW2015} constructed a second-order time-stepping scheme for the CHNS system.
For two-phase incompressible flow models with variable density and viscosity, Chen and Yang \cite{CY2022} proposed a fully decoupled numerical approach that combines Strang operator splitting with a decoupling strategy based on the zero-energy-contribution property. More recently, first- and second-order unconditionally energy-stable finite element schemes for the CHNS system were developed in \cite{CLYY2026} by combining the IEQ approach with finite element discretization. The resulting schemes were shown to preserve mass conservation and energy dissipation, and the theoretical findings were validated through a series of numerical experiments.

On the other hand, rigorous error analysis for numerical schemes of the CHNS system is considerably more challenging. Cai \textit{et al.} \cite{CCS2017,CS2018} established error estimates for both semi-discrete and fully discrete energy-stable schemes for two-phase incompressible flows. Li and Shen \cite{LS20201} developed a second-order, weakly coupled, linear, and energy-stable SAV-MAC scheme for the CHNS equations, and proved convergence results for the corresponding Cahn--Hilliard--Stokes system. For further theoretical developments on error analysis of numerical schemes for CHNS-type models, we refer the reader to \cite{CSWY2023,CJLWW2024,CJWW2022,G2025}.
%Recently, Diegel, Feng, and Wise \cite{DFW2015} proposed a fully discrete convex-splitting finite element scheme for the Cahn--Hilliard--Darcy--Stokes system based on the backward Euler method. They proved the unconditional energy stability of the scheme and derived error estimates for the phase-field variable, chemical potential, and velocity. However, no convergence result for the pressure approximation was established.
Subsequently, Diegel, Wang, Wang and Wise \cite{DWWW2017} extended this framework to the Cahn–Hilliard–Navier–Stokes system with a second-order in time scheme combining Crank–Nicolson and Adams–Bashforth extrapolation, and established unconditional energy stability and optimal convergence rates.
To the best of our knowledge, Cai \textit{et al.} \cite{CSWY2023} were the first to establish optimal $L^2$-error estimates for a convex-splitting finite element scheme for the CHNS system. By introducing a novel projection operator and employing negative-norm superconvergence techniques, they proved that, in both the Taylor--Hood and MINI finite element spaces, the phase-field variable, chemical potential, and pressure achieve optimal-order convergence in the $L^2$ norm for $r\ge 1$, while the velocity attains optimal-order $L^2$ convergence for $r\ge 2$. The theoretical predictions were further confirmed by numerical experiments.
More recently, a fully discrete finite element scheme based on a double scalar auxiliary variable (SAV) approach combined with a pressure-correction projection method was proposed in \cite{G2025}. The resulting scheme is linear, fully decoupled, and unconditionally energy stable. Furthermore, optimal-order $L^2$ error estimates were rigorously established in the $P_r\times P_r\times P_{r+1}\times P_r$ finite element space without relying on any specially designed projection operator.

In this paper, we propose a linear, fully decoupled, and unconditionally energy-stable fully discrete finite element scheme for the CHNS system based on the SAV approach.
% The SAV method was proposed in \cite{SX2018,SXY2018}. Its core idea is to introduce a single scalar auxiliary variable to represent the discrete energy, to handle gradient flow systems with nonlinear potential functions. This approach transforms the original nonlinear system into a linear one and preserves a discrete energy dissipation law at the fully discrete level. However, when extending the SAV method to the coupled CHNS system, the problem becomes more challenging due to additional nonlinearities (such as the convection term) and the incompressibility constraint introduced by the Navier–Stokes (NS) equations.
The SAV method, originally introduced in \cite{SX2018,SXY2018}, provides an efficient framework for the numerical approximation of gradient flow systems with nonlinear free-energy potentials. Its key idea is to introduce a scalar auxiliary variable associated with the nonlinear energy functional, thereby transforming the original nonlinear system into a linear one while preserving a discrete energy dissipation law at the fully discrete level.

In the literature, existing SAV-based schemes for the CHNS system have been developed to treat the nonlinearities arising from the Cahn--Hilliard and Navier--Stokes components separately.
They employed either the multi-scalar auxiliary variable (MSAV) approach \cite{G2025,LS2020} or a combination of the scalar auxiliary variable (SAV) and zero-energy-contribution (ZEC) techniques \cite{CY2021,Y20210}. In the MSAV framework, multiple independent scalar auxiliary variables are introduced to facilitate the decoupling of different nonlinear components. For the CHNS system, two auxiliary variables are typically employed to treat the nonlinear terms arising from the Cahn--Hilliard and Navier--Stokes equations separately, resulting in two independent scalar ordinary differential equations and enabling the development of linear, decoupled, and unconditionally energy-stable schemes.
The SAV-ZEC approach also introduces two auxiliary variables, but they serve different purposes. One auxiliary variable is associated with the nonlinear free-energy potential and contributes directly to the discrete energy, while the other is introduced to handle the nonlinear convection and coupling terms whose overall contribution to the energy dissipation law is zero. By exploiting the zero-energy-contribution property of these terms, the SAV-ZEC approach achieves linearization and unconditional energy stability while preserving a decoupled and efficient numerical structure.

Although both the MSAV and SAV-ZEC approaches provide effective decoupling strategies and lead to unconditionally energy-stable schemes, they require the introduction of multiple auxiliary variables. As a result, the nonlinear system needs to be decomposed multiple times, which increases the complexity of both the numerical formulation and implementation and may introduce additional computational costs.

%In contrast, the present work employs only a single scalar auxiliary variable to deal all the nonlinear terms simultaneously, combined with an implicit--explicit (IMEX) time discretization and a finite element spatial discretization.
In contrast, the present work introduces only a single scalar auxiliary variable to simultaneously reformulate all nonlinear terms, together with a novel update strategy for the scalar auxiliary variable. An implicit--explicit (IMEX) Euler time discretization is then employed, combined with a finite element spatial discretization.
The strategy of employing a unified technique to handle multiple nonlinear terms has become an effective strategy for solving large-scale coupled systems \cite{liu2026structure, li2026fully}.
Moreover, unlike the pressure-correction SAV scheme proposed in \cite{YY2024}, where the velocity and pressure variables are decoupled through a projection step, our approach preserves the velocity--pressure coupling by directly solving the Navier--Stokes subsystem at each time step. This choice is motivated by both analytical and computational considerations: it simplifies the error analysis and allows us to establish optimal error estimates under the weaker regularity assumption $\phi\in H^2(\Omega)$, rather than requiring $\phi\in H^3(\Omega)$.

The proposed method remains computationally efficient, as each time step only involves solving one linear Cahn--Hilliard-type system, one Stokes-type system, and one scalar quadratic equation for the auxiliary variable. Furthermore, we prove that the proposed single-SAV IMEX finite element scheme is uniquely solvable and satisfies an unconditional discrete energy dissipation law. We also establish stability and derive optimal-order $L^2$ error estimates for the fully discrete finite element approximation. 
The error analysis is developed by decomposing the total error into a projection error and a discrete error using the standard $L^2$ projection and the Ritz projection. A major challenge is that the nonlinear potential $g(\phi)=F'(\phi)$ is not globally Lipschitz. This difficulty is resolved by proving the uniform $L^\infty$ boundedness of the numerical solution, allowing the error estimates to be established without requiring any restrictive assumptions on the free energy.

The remainder of this paper is organized as follows. In Section \ref{secScheme}, we present the model and its equivalent form. Then we develop the decoupled scheme and show its associated  stability results. In Section \ref{secError}, we establish the optimal error estimates  for the solutions of a fully discrete system. Finally, several numerical examples are presented to test and verify the theoretical results of the proposed numerical scheme in Section \ref{secNum}.

\section{The fully discrete SAV-FEM scheme}\label{secScheme}
% 	We introduce a scalar energy variable
% $\tilde E_1[\phi]=\int_\Omega F(\phi)dx>-c_0 (c_0>0)$ and let $\tilde C_0>c_0$ such that $\tilde E_1[\phi]+\tilde C_0>0$. We further denote $E_1=\tilde E_1+C_0$. In this case, $ E_1 $ has a positive lower bound $C_0=\tilde C_0-c_0$. Let $r(t)=\sqrt{E_1[\phi]}$,  the CHNS system \eqref{chnsequ} can be transformed into
\subsection{Prerequisites and the fully discrete scheme}

We first introduce the scalar energy functional
$
\tilde E_1[\phi]=\int_\Omega F(\phi)\,dx$, $\tilde E_1[\phi] > -c_0$ with $c_0>0$,
and choose a constant $\tilde C_0>c_0$ such that $\tilde E_1[\phi]+\tilde C_0>0$. Define
\begin{equation*}\label{C0}
    E_1[\phi]=\tilde E_1[\phi]+C_0,\qquad C_0=\tilde C_0-c_0>0,
\end{equation*}
so that $E_1[\phi]\ge C_0>0$. We then introduce the scalar auxiliary variable
\[
r(t)=\sqrt{E_1[\phi(t)]}.
\]
With this reformulation, the CHNS system \eqref{chnsequ} can be reformulated as
\begin{subequations}\label{1sequ}
	\begin{align}
		\phi_t-\triangle \mu+\frac{r(t)}{\sqrt{E_1[\phi]}}\nabla\cdot(\textbf{u}\phi)=0,&\qquad \text{in}\ \ \Omega\times(0,T],\label{1sequ1}\\
		\mu+\triangle\phi-g(\phi)=0,&\qquad \text{in}\ \ \Omega\times(0,T],\label{1sequ2}\\
		\textbf{u}_t+\frac{r(t)}{\sqrt{E_1[\phi]}}(\textbf{u}\cdot\nabla) \textbf{u}+\nabla p-\triangle \textbf{u}-\frac{r(t)}{\sqrt{E_1[\phi]}}\mu\nabla\phi
		=0,&\qquad \text{in}\ \ \Omega\times(0,T],\label{1sequ3}\\
		\nabla\cdot \textbf{u}=0,&\qquad \text{in}\ \ \Omega\times(0,T],\label{1sequ4}\\
		r_t=\frac{1}{2r(t)}\Big{(} \int_\Omega g(\phi)\phi_tdx+\frac{r(t)}{\sqrt{E_1[\phi]}}((\textbf{u}\cdot\nabla \textbf{u},\textbf{u})\nonumber\\
		+(\textbf{u}\cdot\nabla\phi,\mu)-(\mu\nabla\phi,\textbf{u}))\Big{)},&\qquad \text{in}\ \ \Omega\times(0,T] \label{1sequ5}.
	\end{align}
\end{subequations}

Unlike the standard SAV approach, the update equation for $r$ in \eqref{1sequ5} contains additional correction terms. These additional terms are essential for enabling a single scalar auxiliary variable to simultaneously treat multiple nonlinear terms while maintaining the key structural properties of the original system.

% For any integer $k \geq 0$ and real number $1 \leq p \leq \infty$, we denote $L^p(\Omega)$ and $W^{k,p}(\Omega)$ respectively the Lebesgue
% spaces and Sobolev spaces of functions defined on $\Omega$ with the abbreviation $H^k(\Omega)=W^{k,2}(\Omega)$. Let $\textbf{H}^{k}(\Omega)=(H^k(\Omega))^2$, $\textbf{L}^{p}(\Omega)=(L^p(\Omega))^2$ are the corresponding vector space and $\textbf{H}_0^1(\Omega)$ is a closed subspace of $\textbf{H}^1(\Omega)$ consisting of the functions with zero trace on $\Omega$. We equip the $H^k(\Omega)$ space with the norm $\|\cdot\|_k$ and set
For any integer $k \ge 0$ and real number $1 \le p \le \infty$, we denote by
$L^p(\Omega)$ and $W^{k,p}(\Omega)$ the Lebesgue and Sobolev spaces of functions
defined on $\Omega$, respectively. We set $H^k(\Omega)=W^{k,2}(\Omega)$.
Let $\mathbf{L}^p(\Omega) = (L^p(\Omega))^d$ and $\mathbf{H}^k(\Omega) = (H^k(\Omega))^d$ 
be the corresponding vector-valued spaces. Moreover, $\mathbf{H}_0^1(\Omega)$
denotes the closed subspace of $\mathbf{H}^1(\Omega)$ consisting of functions
with vanishing trace on $\partial\Omega$.
We endow $H^k(\Omega)$ with the norm $\|\cdot\|_k$ and introduce the following spaces:
\begin{eqnarray*}
	&&W=H^1(\Omega),\ \ \textbf{X}=\textbf{H}_0^1(\Omega),\ \ \textbf{Y}=\textbf{L}^2(\Omega),\\
	&&M=L_0^2(\Omega)=\left\lbrace q\in L^2(\Omega);\int_{\Omega}qdx=0\right\rbrace ,
	\\
	&&\textbf{H}=\{\textbf{v}\in \textbf{Y};\nabla\cdot \textbf{v}=0\  \text{in}\  \Omega, \ \ \text{and}\ \ \textbf{v}\cdot \textbf{n}|_{\partial\Omega}=0\},\\
	&&\textbf{V}=\{\textbf{v}\in \textbf{X};\nabla\cdot \textbf{v}=0\},\ \ \
	\mathbb{R}=\{ \text{the\ space\ of\ real\ numbers}\}.
\end{eqnarray*}
For all $\textbf{u}\in\textbf{V}$ and $\textbf{v},\textbf{w}\in\textbf{X}$, it follows
\begin{eqnarray*}
	((\textbf{u}\cdot\nabla)\textbf{v},\textbf{w})=((\textbf{u}\cdot\nabla)\textbf{v},\textbf{w})+\frac{1}{2}((\nabla\cdot\textbf{u})\textbf{v},\textbf{w})=\frac{1}{2}((\textbf{u}\cdot\nabla)\textbf{v},\textbf{w})-\frac{1}{2}((\textbf{u}\cdot\nabla)\textbf{w},\textbf{v}).
\end{eqnarray*}
In particular, taking $\mathbf{w}=\mathbf{v}$ yields
\begin{equation}\label{triliform}
((\mathbf{u}\cdot\nabla)\mathbf{v},\mathbf{v})=0.
\end{equation}

Taking the inner products of \eqref{1sequ1}--\eqref{1sequ4} with $\mu$, $\phi_t$, $\mathbf{u}$, and $p$, respectively, and multiplying \eqref{1sequ5} by $2r$, then using \eqref{triliform} together with the identity
\begin{eqnarray}\label{ab}
	2(a-b,a)=|a|^2-|b|^2+|a-b|^2, \qquad \forall a, b\in R,
\end{eqnarray} 
we obtain the following energy dissipation law:
\begin{eqnarray*}
	\frac{d\tilde{E}(\phi,\textbf{u},r)}{dt}=-\|\nabla\mu\|_0^2-\|\nabla \textbf{u}\|_0^2 \leq 0,
\end{eqnarray*}
where $\tilde{E}(\phi,\textbf{u},r)=\int_\Omega\frac{1}{2}(|\textbf{u}|^2+|\nabla\phi|^2)dx+r^2$ denotes the modified energy.

% In order to establish the stability and the convergence of the SAV schemes for the CHNS system \eqref{chnsequ}, some Sobolev inequalities are needed \cite{A1975, FHL2007, H2003, H2005, HS2007}
% To establish the stability and convergence of the SAV schemes for the CHNS system \eqref{1sequ}, we will repeatedly use the following Sobolev inequalities (see, e.g., \cite{A1975,FHL2007,H2003,H2005,HS2007}):
In order to establish the stability and convergence of the SAV schemes for the CHNS system \eqref{1sequ}, we shall frequently make use of the following Sobolev inequalities (see, e.g., \cite{A1975,FHL2007,H2003,H2005,HS2007}):
\begin{align}\label{s1}
	&\|\textbf{v}\|_{L^r}\leq C\|\nabla \textbf{v}\|_0\quad (2\leq r\leq6),\ \ \ \ \ 
	\|\textbf{v}\|_{L^4}\leq C\|\textbf{v}\|_0^{\frac{1}{2}}\|\nabla \textbf{v}\|_0^{\frac{1}{2}},\ \ \
	%\|v\|_{L^4}\leq C\|v\|_0^{\frac{1}{2}}\|\nabla v\|_0^{\frac{1}{2}},\ \ \
	\forall\ \textbf{v}\in \textbf{X},\\
	\label{s2}
	&\|\textbf{v}\|_{L^\infty}\leq C\|\textbf{v}\|_0^{\frac{1}{2}}\|A \textbf{v}\|_0^{\frac{1}{2}}, \ \ \  \ \ \ \ \ \ \ \ \ \  \ \ \  \|\textbf{v}\|_2\leq C\|A\textbf{v}\|_0,\ \ \
	\forall\ \textbf{v}\in \textbf{H}^2(\Omega)\cap \textbf{V},\\
	\label{s3}
	&\|\phi\|_{L^r}\leq C\|\phi\|_1\quad (2\leq r\leq \infty),\ \ \ \ \ \ \  
	\forall \phi\in W,\\
	\label{s4}
	&\|\phi\|_{L^\infty}\leq C\|\phi\|_0^{\frac{1}{2}}(\|\phi\|_0^2+\|\triangle\phi\|_0^2)^{\frac{1}{4}},\ \ \  
	\forall\ \phi\in H^2(\Omega)\  \text{with}\ \frac{\partial\phi}{\partial n} \Big{|} _{\partial\Omega}=0.
\end{align}
Here, $ A=-P\triangle $ denotes the Stokes operator, where $P$ is the $L^2$-orthogonal projection from $\mathbf{Y}$ onto $\mathbf{H}$. Throughout this paper, $C>0$ denotes a generic constant depending only on the domain $\Omega$. The value of $C$ may vary from one estimate to another.
%is Stokes operator with $ P $ is the $L^2$-orthogonal projection from $\textbf{Y}$ to $\textbf{H}$. $C>0$ is a general constant that depends on $ \Omega $. In the following analysis, C will take different values according to different situations.

% We will use the classical discrete Gronwall's Lemma frequently in the following proofs.
We will frequently use the following classical discrete Gronwall's Lemma in the subsequent analysis.
\begin{lem} \cite{S1990,S1992} For integers $n\geq0$, let $c,a_n,b_n,c_n,d_n\geq0$ such
	that
	\begin{eqnarray}\label{2.1}
		a_m+\Delta t\sum_{n=0}^{m}b_n\leq\Delta t\sum_{n=0}^{m}d_na_n+\Delta t\sum_{n=0}^{m}c_n+c,\ \ \forall\ m\geq 0.
	\end{eqnarray}
	Suppose further that $ \Delta td_n<1 $, $\forall n \geq 0$, and define $ \sigma_n=(1-\Delta td_n)^{-1} $. Then
	\begin{eqnarray}\label{2.2}
		a_m+\Delta t\sum_{n=0}^{m}b_n\leq e^{\left(\Delta t\sum\limits_{n=0}^{m}d_n\sigma_n\right) }\left( \Delta t\sum\limits_{n=0}^{m}c_n+c\right) ,\ \ \forall\ m\geq 0.
	\end{eqnarray}
\end{lem}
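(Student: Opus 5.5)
We prove the discrete Gronwall lemma by a one-step induction, multiplying through by the factors $\sigma_n$ as we go. It suffices to treat $b_n=0$ first. Indeed, if we set $\tilde c=\Delta t\sum_{n=0}^m c_n+c$ and regard it as fixed for the final index $m$, the left-hand quantity $a_k+\Delta t\sum_{n\le k} b_n$ dominates $a_k$. Since $c_n\ge0$, the right-hand side of \eqref{2.1} is nondecreasing in its upper index. So we work with $S_k:=\Delta t\sum_{n=0}^{k}d_na_n+\tilde c$ for $k\le m$. This is possible because, for every $k\le m$, we have $a_k\le a_k+\Delta t\sum_{n\le k}b_n\le \Delta t\sum_{n\le k}d_na_n+\Delta t\sum_{n\le k}c_n+c\le S_k$.

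The key recursion comes from the last term of $S_k$, which contains $a_k$ itself. We write $S_k=S_{k-1}+\Delta t d_k a_k\le S_{k-1}+\Delta t d_k S_k$, with the convention $S_{-1}=\tilde c$. Since $\Delta t d_k<1$, this gives $S_k\le \sigma_k S_{-1+k}$. Iterating yields $S_k\le \tilde c\prod_{n=0}^{k}\sigma_n$. We then use the elementary bound $\sigma_n=1+\Delta t d_n\sigma_n\le \exp(\Delta t d_n\sigma_n)$, which follows from $1+x\le e^x$ because $\sigma_n-1=\Delta t d_n\sigma_n$. This gives $\prod_{n=0}^{m}\sigma_n\le \exp\bigl(\Delta t\sum_{n=0}^m d_n\sigma_n\bigr)$.

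Finally we restore the $b_n$ term. Apply \eqref{2.1} at index $m$; its right-hand side is exactly $S_m$ (with $\tilde c$ as defined), so $a_m+\Delta t\sum_{n=0}^m b_n\le S_m\le e^{\Delta t\sum_{n=0}^m d_n\sigma_n}\bigl(\Delta t\sum_{n=0}^m c_n+c\bigr)$. This is \eqref{2.2}.

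The only delicate point is the implicit index. Because $d_m a_m$ appears on the right-hand side at the same index $m$, the restriction $\Delta t d_n<1$ and the factors $\sigma_n$ are needed. We also have to take care that $\tilde c$ is fixed with respect to the final $m$, so that the monotonicity in $k$ is legitimate.
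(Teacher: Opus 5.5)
Your proof is correct and complete. You freeze $\tilde c$ at the final index $m$, which is legitimate since $c_n\ge 0$. This gives $a_k\le S_k$ for every $k\le m$, hence $(1-\Delta t\,d_k)S_k\le S_{k-1}$. The identity $\sigma_n=1+\Delta t\,d_n\sigma_n$ combined with $1+x\le e^x$ then turns $\prod_{n=0}^{m}\sigma_n$ into exactly the exponential in \eqref{2.2}.

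The paper gives no proof of this lemma; it simply quotes it from the cited references. So there is no different route to compare against, and what you wrote is the standard self-contained argument.

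Your ``reduce to $b_n=0$'' framing is not really a reduction. You only discard the $b_n$ to get the intermediate bounds $a_k\le S_k$, and then restore them by applying \eqref{2.1} at index $m$. That is fine.

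The same recursion also proves the accompanying remark. Build the majorant from the sum up to $k-1$, i.e. $S_k=\Delta t\sum_{n=0}^{k-1}d_na_n+\tilde c$. Then $S_{k+1}\le(1+\Delta t\,d_k)S_k$, with nothing to absorb, which yields the version with $\sigma_n=1$ and no restriction on $\Delta t$.
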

\begin{rem} 
If the first summation on the right-hand side of \eqref{2.1} is taken from $n=0$ to $m-1$, namely,
%If the first sum on the right in \eqref{2.1} extends only up to $ m-1 $, 
then the estimate \eqref{2.2} holds for all $ n > 0 $ with
	$ \sigma_n = 1 $. 
\end{rem}

We consider the fully discrete finite element method based on the IMEX Euler scheme for solving the model system \eqref{1sequ}. Denote
\begin{eqnarray*}
	\Delta t=T/N,\ \ t^n=n\Delta t,\ \ d_tf^{n+1}=\frac{f^{n+1}-f^n}{\Delta t},\ \ \text{for}\ n\leq N,
\end{eqnarray*}
where $T$ denotes the final time, $N$ is the number of time steps, and $\Delta t$ is the uniform time-step size. Let $\Omega\subset\mathbb{R}^d$ be a polygonal domain, and let $\mathcal{T}_h$ be a shape-regular triangulation of $\Omega$ consisting of triangular elements $K$. We denote $h=\max_{K\in\mathcal {T}_h}h_K$, where $h_K$ is the diameter of the element $ K $. 

%We construct the phase field-velocity-pressure finite element spaces: 
%$(W_h\times \textbf{X}_h\times M_h)\subset (W\times \textbf{X}\times M)$ based on the mesh $\mathcal {T}_h$ 
% For any non-negative integer $ l $, we denote $P_l(K)$ as the space of polynomials over $ K $ of degree less than or equal to $ l $. Introduce $\hat{b}\in H_0^1(K)$ to take the value $ 1 $ at the barycenter of $ K $ and such that $0\leq\hat{b}(x)\leq1$, which is called the bubble function. Choosing the following phase field-velocity-pressure finite element spaces

For any non-negative integer $l$, let $P_l(K)$ denote the space of polynomials on $K$ of degree less than or equal to $l$. Let $\hat{b}\in H_0^1(K)$ be the bubble function satisfying $\hat{b}(x_K)=1$ at the barycenter $x_K$ of $K$ and $0\le \hat{b}(x)\le 1$ for all $x\in K$.
We define the phase-field, velocity, and pressure finite element spaces as follows:
\begin{eqnarray*}
	&&W_h=\{\phi_h\in C^0(\Omega)\cap W;\ \phi_h|_K\in P_1(K),\ \forall\ K\in\mathcal{T}_h\},\\
	&&\textbf{X}_h=\{\textbf{v}_h\in \textbf{C}^0(\Omega)\cap \textbf{X};\ \textbf{v}_h|_K\in \left( P_1(K)\oplus\ \text{span}\{\hat{b}\}\right) ^d,\ \forall\ K\in\mathcal{T}_h\},\\
	&&M_h=\{q_h\in C^0(\Omega)\cap M;\ q_h|_K\in P_1(K),\ \forall\ K\in\mathcal{T}_h\}.
\end{eqnarray*}
Define the subspace $\textbf{V}_h$ of $\textbf{X}_h$ as
\begin{eqnarray*}
	\textbf{V}_h=\{\textbf{v}_h\in \textbf{X}_h;\ \nabla\cdot \textbf{v}_h=0\}.
\end{eqnarray*}

% The so-called inf-sup inequality is defined by: for each $q_h\in M_h$, there exists $\textbf{0}\neq \textbf{v}_h\in \textbf{X}_h$ such that
% \begin{eqnarray}\label{inf-sup}
% 	(q_h,\nabla\cdot \textbf{v}_h)\geq\beta\|q_h\|_0\|\textbf{v}_h\|_1,
% \end{eqnarray}
% where $\beta>0$ is a constant independent of $ h $ and $\Delta t$.

The inf-sup condition is stated as follows: for each $q_h \in M_h$, there exists a nonzero $\mathbf{v}_h \in \mathbf{X}_h$ such that
\begin{equation}\label{inf-sup}
(q_h, \nabla \cdot \mathbf{v}_h)
\ge \beta |q_h|_0 |\mathbf{v}_h|_1,
\end{equation}
where $\beta>0$ is a constant independent of $h$ and $\Delta t$.

% Set $P_h:\textbf{Y}\rightarrow \textbf{X}_h$ as the $L^2$-orthogonal projection \cite{AM1994,H2015,HS2008} defined by
Let $P_h:\mathbf{Y}\to \mathbf{X}_h$ be the $L^2$-orthogonal projection \cite{AM1994,H2015,HS2008} defined by
\begin{eqnarray}\label{orth1}
	(P_h\textbf{v},\textbf{v}_h)=(\textbf{v},\textbf{v}_h),\ \ \forall\ \textbf{v}\in \textbf{Y},\ \textbf{v}_h\in \textbf{X}_h.
\end{eqnarray}
The projection satisfies the following inequalities
\begin{eqnarray}\label{orth2}
	\begin{cases}
		\|P_h\textbf{v}\|_1\leq C\|\textbf{v}\|_1,\ \ \|\textbf{v}-P_h\textbf{v}\|_0\leq Ch\|\textbf{v}\|_1,\ \ \forall\ \textbf{v}\in \textbf{X},\\
		\|\textbf{v}-P_h\textbf{v}\|_0+h\|\textbf{v}-P_h\textbf{v}\|_1\leq Ch^2\|A\textbf{v}\|_0,\ \ \ \forall\ \textbf{v}\in \textbf{H}^2(\Omega)\cap\textbf{V}.
	\end{cases}
\end{eqnarray}

Denote the discrete Stokes operator $A_h=-P_h\triangle_h:\textbf{X}_h\rightarrow \textbf{X}_h$ by the definition:
\begin{equation*}
(A_h \textbf{u}_h, \textbf{v}_h) = (A_h^{1/2} \textbf{u}_h, A_h^{1/2} \textbf{v}_h) = (\nabla \textbf{u}_h, \nabla \textbf{v}_h), \ \ \forall\ \textbf{u}_h,\textbf{v}_h\in \textbf{X}_h,
\end{equation*}
and the discrete Laplacian operator $ \triangle_h $ is defined by 
\begin{eqnarray}\label{Ah-1}
	(-\triangle_hw_h,s_h)&=&(\nabla w_h,\nabla s_h),\ \ \forall\ w_h,s_h\in W_h(or\ \textbf{X}_h).
\end{eqnarray}
%Here, the discrete norm can be defined, where
We also introduce the following discrete norms. For $\mathbf{v}_h \in \mathbf{V}_h$, define
\begin{eqnarray}
	\|\textbf{v}_h\|_2&=&\|A_h\textbf{v}_h\|_0,\ \ \|\textbf{v}_h\|_{-1}=\|A_h^{-1/2}\textbf{v}_h\|_0,\\
	\label{Ah-11}
	\|A_h^{1/2}\textbf{v}_h\|_0&=&\|\nabla\textbf{v}_h\|_0,\ \ \|\nabla A_h^{-1/2}\textbf{v}_h\|_0=\|\textbf{v}_h\|_0.
\end{eqnarray}

Let $R_h:H^1(\Omega)\rightarrow W_h$ be the $Ritz$ orthogonal projection \cite{CMS2020} in the finite element space
\begin{eqnarray}\label{orth3}
	(\nabla R_h\phi,\nabla\psi_h)=(\nabla \phi,\nabla\psi_h),\ \ \forall\ \phi\in H^1,\ \psi_h\in W_h.
\end{eqnarray}
The projection satisfies the following inequality
\begin{eqnarray}\label{orth4}
	\|\phi-R_h\phi\|_0+h\|\nabla(\phi-R_h\phi)\|_0\leq Ch^2\|\phi\|_2.
\end{eqnarray}

%	\begin{lem} \label{lem5.12} \cite{T2006}
	%		Let $R_h\psi$ be the $Ritz$ projection of $ \psi $. It holds that
	%		\begin{eqnarray*}
		%			\|\psi-R_h\psi\|_{-s}\leq Ch^{s+q}\|\psi\|_q,\ \ \ \text{for}\ 0\leq s\leq l-1,\ \ 1\leq q\leq l+1.
		%		\end{eqnarray*}
	%	\end{lem}

\begin{lem} \label{lem5.11} \cite{CMS2020}
	For all $\phi_h\in W_h$, the discrete Laplacian operator $\triangle_h$ satisfies the following estimates:
	\begin{eqnarray*}
		\|\triangle_h\phi_h\|_{-2}&\leq& C\|\phi_h\|_0,\\
		\|\triangle_h \phi\|_{-1}&\leq& C\|\nabla \phi\|_0.
	\end{eqnarray*}
\end{lem}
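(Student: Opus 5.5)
The plan is to prove both estimates by duality, from the definitions of the negative norms and of the discrete Laplacian \eqref{Ah-1}. I take $\|f\|_{-s}=\sup_{0\ne v\in H^s(\Omega)}(f,v)/\|v\|_s$ for $s=1,2$. Since $\triangle_h\phi_h\in W_h$, it is a legitimate $L^2$ function and this pairing makes sense. Two tools will be used throughout. The first is the $L^2$ projection $Q_h:L^2(\Omega)\to W_h$ onto $W_h$, the scalar analogue of \eqref{orth1}. The second is the Ritz projection $R_h$ from \eqref{orth3}--\eqref{orth4}. The key identity is that, for any $v$,
\[
(\triangle_h\phi_h,v)=(\triangle_h\phi_h,Q_hv)=-(\nabla\phi_h,\nabla Q_hv),
\]
because $\triangle_h\phi_h\in W_h$.

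\textbf{Second estimate ($s=1$).} Let $v\in H^1(\Omega)$. Using the identity,
\[
|(\triangle_h\phi_h,v)|\le\|\nabla\phi_h\|_0\|\nabla Q_hv\|_0\le C\|\nabla\phi_h\|_0\|v\|_1 .
\]
The last step uses the $H^1$-stability of the $L^2$ projection on quasi-uniform meshes, i.e.\ the scalar version of the first inequality in \eqref{orth2}. Taking the supremum over $v$ gives $\|\triangle_h\phi_h\|_{-1}\le C\|\nabla\phi_h\|_0$. The statement writes $\phi$ rather than $\phi_h$; I read this as $\phi_h$.

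\textbf{First estimate ($s=2$).} Let $v\in H^2(\Omega)$. This time I do not pass the derivative onto $Q_hv$; I replace $Q_hv$ by $R_hv$. By the definition \eqref{Ah-1} of $\triangle_h$ and then the definition \eqref{orth3} of $R_h$,
\[
(\triangle_h\phi_h,R_hv)=-(\nabla\phi_h,\nabla R_hv)=-(\nabla\phi_h,\nabla v).
\]
For the $H^2$ test functions in the dual norm I use the Neumann-compatible class, $\partial v/\partial n=0$ on $\partial\Omega$, which matches the boundary condition \eqref{bc}. Integrating by parts then gives
\[
-(\nabla\phi_h,\nabla v)=(\phi_h,\triangle v),\qquad |(\phi_h,\triangle v)|\le\|\phi_h\|_0\|v\|_2 .
\]
The remaining piece is the projection defect $(\triangle_h\phi_h,v-R_hv)$. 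It is bounded by
\[
\|\triangle_h\phi_h\|_0\|v-R_hv\|_0\le\|\triangle_h\phi_h\|_0\,Ch^2\|v\|_2,
\]
using \eqref{orth4}. To finish, I apply the inverse estimate $\|\triangle_h\phi_h\|_0\le Ch^{-2}\|\phi_h\|_0$, which follows from $\|\triangle_h w_h\|_0\le Ch^{-1}\|\nabla w_h\|_0$ together with $\|\nabla w_h\|_0\le Ch^{-1}\|w_h\|_0$ on a quasi-uniform mesh. This makes the defect term $\le C\|\phi_h\|_0\|v\|_2$. Taking the supremum over $v$ yields $\|\triangle_h\phi_h\|_{-2}\le C\|\phi_h\|_0$.

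\textbf{Main obstacle.} The delicate step is the $s=2$ case. It needs a precise choice of the $H^2$ dual space, so that the boundary term in the integration by parts vanishes. It also needs quasi-uniformity of $\mathcal{T}_h$, so that the inverse estimate exactly cancels the $h^2$ from \eqref{orth4}. The paper only assumes shape-regularity, so I would add the quasi-uniformity hypothesis, as is standard in \cite{CMS2020}. If one insists on plain shape-regularity, I would first try to bound the defect term through duality for the Neumann problem instead, at the price of requiring elliptic regularity on a convex $\Omega$.
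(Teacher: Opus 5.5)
The paper gives no proof of this lemma; it only cites \cite{CMS2020}, so there is no in-paper route to compare against. Your duality argument is a correct self-contained proof under the hypotheses you make explicit: quasi-uniform $\mathcal{T}_h$, and $\|\cdot\|_{-2}$ taken as the dual of $\{v\in H^2(\Omega):\partial v/\partial n=0\}$. It also relies on \eqref{orth4}, which already presupposes $H^2$-regularity of the Neumann problem. Both hypotheses you flag as delicate are forced by the statement itself, not by your method.

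The Neumann restriction is not optional. If all of $H^2(\Omega)$ is used as test space, the first bound is false. Take $\phi_h=\sum_{z\in\partial\Omega}\varphi_z$, the sum of the nodal basis functions at the boundary nodes. Then $\phi_h\equiv1$ on $\partial\Omega$, $\|\phi_h\|_0\le Ch^{1/2}$ and $\|\nabla\phi_h\|_0\le Ch^{-1/2}$. Take also $v(x)=|x|^2$. Your own splitting, with the inverse estimate and \eqref{orth4} applied to the defect term, gives
\[
(\triangle_h\phi_h,v)=(\phi_h,\triangle v)-\int_{\partial\Omega}\phi_h\frac{\partial v}{\partial n}\,ds+(\triangle_h\phi_h,v-R_hv)=-2d|\Omega|+O(h^{1/2}).
\]
So the ratio to $\|\phi_h\|_0\|v\|_2$ blows up like $h^{-1/2}$. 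You should therefore define $\|\cdot\|_{-2}$ with the Neumann class from the outset, rather than starting with all of $H^2(\Omega)$ and switching midway. This choice also keeps $\|\cdot\|_{-2}\le\|\cdot\|_{-1}$, which the paper uses.

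Likewise, the mesh hypothesis cannot be removed from the $s=1$ bound, because that bound is equivalent to $H^1$-stability of $Q_h$. Choosing $\phi_h=Q_hv$ gives
\[
\|\nabla Q_hv\|_0^2=-(\triangle_hQ_hv,v)\le\|\triangle_hQ_hv\|_{-1}\|v\|_1\le C\|\nabla Q_hv\|_0\|v\|_1 .
\]
Hence whatever mesh condition the $H^1$-stability of the $L^2$ projection requires is needed for the lemma itself. Quasi-uniformity suffices, while sufficiently strongly graded meshes can violate it. Your Neumann-duality fallback only addresses the $s=2$ defect term, so it cannot remove this requirement.

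On the $\phi$ versus $\phi_h$ question: your $s=1$ argument covers the statement as literally written. It works for any $\phi\in H^1(\Omega)$ with $\triangle_h\phi\in W_h$ defined through \eqref{Ah-1}, since it only uses $(\triangle_h\phi,Q_hv)=-(\nabla\phi,\nabla Q_hv)$. The $s=2$ bound, by contrast, genuinely needs $\phi_h\in W_h$, because $\triangle_h\phi=\triangle_hR_h\phi$ and $R_h$ is not bounded in $L^2$.
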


The fully discrete SAV-FEM scheme based on the formulation \eqref{1sequ} is formulated as follows: find $(\phi_h^{n+1},\mu_h^{n+1},\textbf{u}_h^{n+1},p_h^{n+1},r_h^{n+1})\in W_h\times W_h\times \textbf{X}_h\times M_h\times R$ such that, for all $(\psi_h,\tau_h,\textbf{v}_h,q_h)\in W_h\times W_h\times \textbf{X}_h\times M_h$, it holds
\begin{subequations}\label{dsequ11}
	\begin{align}
		(d_t\phi^{n+1}_h,\psi_h)&+(\nabla \mu_h^{n+1},\nabla \psi_h)+\frac{r_h^n}{\sqrt{E_{1,h}^n}}(\textbf{u}_h^n\cdot\nabla\phi_h^n,\psi_h)=0,\\
		(\mu_h^{n+1},\tau_h)&-(\nabla\phi_h^{n+1},\nabla\tau_h)-(g(\phi_h^n),\tau_h)=0,\\
		(d_t\textbf{u}_h^{n+1},\textbf{v}_h)&+\frac{r_h^n}{\sqrt{E_{1,h}^n}}(\textbf{u}_h^n\cdot\nabla \textbf{u}_h^n,\textbf{v}_h)-(p_h^{n+1},\nabla \cdot \textbf{v}_h)+(\nabla \textbf{u}_h^{n+1},\nabla \textbf{v}_h)\nonumber\\
		&\ \ \ -\frac{r_h^n}{\sqrt{E_{1,h}^n}}(\mu^n_h\nabla\phi_h^n,\textbf{v}_h)
		=0,\\
		(\nabla\cdot \textbf{u}_h^{n+1},q_h)&=0,\\
		d_tr_h^{n+1}&=\frac{1}{2r_h^{n+1}}\Big{(} (g(\phi_h^n),d_t\phi_h^{n+1})+\frac{r_h^n}{\sqrt{E_{1,h}^n}}((\textbf{u}_h^n\cdot\nabla \textbf{u}_h^n,\textbf{u}_h^{n+1}) \nonumber\\
		&\ \ \ +(\textbf{u}_h^n\cdot\nabla\phi_h^n,\mu_h^{n+1})-(\mu_h^n\nabla\phi_h^n,\textbf{u}_h^{n+1}))\Big{)},
	\end{align}
\end{subequations}
with 
\[\phi_h^0=R_h\phi^0,\quad  \mu_h^0=R_h\mu^0, \quad  \textbf{u}_h^0=P_h\textbf{u}^0,\quad   r_h^0=\sqrt{E_{1,h}^0}=\sqrt{E_1[\phi_h^0]},\]  and 
\[E_{1,h}^n=E_1[\phi_h^n]=\int_\Omega F(\phi_h^n)dx.\]

\begin{thm}\label{thm11}
	Scheme \eqref{dsequ11} is unconditionally energy stable in the sense that
	\begin{eqnarray}\label{ener}
		\tilde{E}_h^{n+1}(\phi,\textbf{u},r)-\tilde{E}_h^n(\phi,\textbf{u},r)\leq-\Delta t(\|\nabla\mu_h^{n+1}\|_0^2+\|\nabla \textbf{u}_h^{n+1}\|_0^2),
	\end{eqnarray}
	where $\tilde{E}_h^n(\phi,\textbf{u},r)=\int_\Omega\frac{1}{2}(|\textbf{u}_h^n|^2+|\nabla \phi_h^n|^2)dx+|r_h^n|^2$.
\end{thm}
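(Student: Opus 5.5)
The proof tests the scheme \eqref{dsequ11} against a specific choice of test functions: $\psi_h=\Delta t\,\mu_h^{n+1}$ in the phase equation, $\tau_h=\phi_h^{n+1}-\phi_h^n$ in the chemical potential equation, $\mathbf{v}_h=\Delta t\,\mathbf{u}_h^{n+1}$ in the momentum equation and $q_h=p_h^{n+1}$ in the divergence equation. The scalar equation is multiplied by $2\Delta t\, r_h^{n+1}$. Summing these cancels the explicit nonlinear terms against the correction terms in the $r$-update, which is exactly why \eqref{1sequ5} was designed with the extra terms.

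\emph{Phase and chemical potential equations.} The phase equation gives
\[
(\phi_h^{n+1}-\phi_h^n,\mu_h^{n+1})+\Delta t\|\nabla\mu_h^{n+1}\|_0^2+\Delta t\,\frac{r_h^n}{\sqrt{E_{1,h}^n}}(\mathbf{u}_h^n\cdot\nabla\phi_h^n,\mu_h^{n+1})=0 .
\]
The chemical potential equation with $\tau_h=\phi_h^{n+1}-\phi_h^n$ gives
\[
(\mu_h^{n+1},\phi_h^{n+1}-\phi_h^n)=(\nabla\phi_h^{n+1},\nabla(\phi_h^{n+1}-\phi_h^n))+(g(\phi_h^n),\phi_h^{n+1}-\phi_h^n).
\]
By the identity \eqref{ab}, the gradient term equals $\tfrac12(\|\nabla\phi_h^{n+1}\|_0^2-\|\nabla\phi_h^n\|_0^2+\|\nabla(\phi_h^{n+1}-\phi_h^n)\|_0^2)$.

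\emph{Momentum and divergence equations.} The momentum equation with $\mathbf{v}_h=\Delta t\,\mathbf{u}_h^{n+1}$, combined with $(\nabla\cdot\mathbf{u}_h^{n+1},p_h^{n+1})=0$, removes the pressure. By \eqref{ab} it yields
\[
\tfrac12\bigl(\|\mathbf{u}_h^{n+1}\|_0^2-\|\mathbf{u}_h^n\|_0^2+\|\mathbf{u}_h^{n+1}-\mathbf{u}_h^n\|_0^2\bigr)+\Delta t\|\nabla\mathbf{u}_h^{n+1}\|_0^2
+\Delta t\frac{r_h^n}{\sqrt{E_{1,h}^n}}\bigl[(\mathbf{u}_h^n\cdot\nabla\mathbf{u}_h^n,\mathbf{u}_h^{n+1})-(\mu_h^n\nabla\phi_h^n,\mathbf{u}_h^{n+1})\bigr]=0 .
\]
Since the convective term is not of the form \eqref{triliform} here, no skew-symmetry is used; it is simply kept.

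\emph{Scalar equation.} Multiplying the $r$-equation by $2\Delta t\,r_h^{n+1}$ and applying \eqref{ab} gives
\[
|r_h^{n+1}|^2-|r_h^n|^2+|r_h^{n+1}-r_h^n|^2=(g(\phi_h^n),\phi_h^{n+1}-\phi_h^n)+\Delta t\frac{r_h^n}{\sqrt{E_{1,h}^n}}\Big[(\mathbf{u}_h^n\cdot\nabla\mathbf{u}_h^n,\mathbf{u}_h^{n+1})+(\mathbf{u}_h^n\cdot\nabla\phi_h^n,\mu_h^{n+1})-(\mu_h^n\nabla\phi_h^n,\mathbf{u}_h^{n+1})\Big].
\]
This multiplication is legitimate only if $r_h^{n+1}\neq0$. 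I take this as part of the (assumed) solvability of the scalar quadratic equation; it is the one point needing care.

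\emph{Summation.} Adding the phase relation (with the chemical potential relation substituted in), the momentum relation and the scalar relation, every nonlinear term and the $g$-term appear exactly once with opposite signs and cancel. The result is
\[
\tilde E_h^{n+1}-\tilde E_h^n+\tfrac12\|\nabla(\phi_h^{n+1}-\phi_h^n)\|_0^2+\tfrac12\|\mathbf{u}_h^{n+1}-\mathbf{u}_h^n\|_0^2+|r_h^{n+1}-r_h^n|^2=-\Delta t(\|\nabla\mu_h^{n+1}\|_0^2+\|\nabla\mathbf{u}_h^{n+1}\|_0^2).
\]
Dropping the nonnegative numerical dissipation terms gives \eqref{ener}. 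The main obstacle is only bookkeeping: verifying that the signs of the three correction terms in the discrete $r$-equation match those produced by the chosen test functions. No inequalities are needed.
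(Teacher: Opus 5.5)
Your proof is correct and follows the paper's argument: the same test functions (up to the harmless factor $\Delta t$), multiplication of the $r$-equation by $2r_h^{n+1}$, and the identity \eqref{ab}, giving the same energy identity with the three nonnegative numerical-dissipation terms. Your remark that \eqref{triliform} is not actually needed is right---the explicit term $(\mathbf{u}_h^n\cdot\nabla\mathbf{u}_h^n,\mathbf{u}_h^{n+1})$ is cancelled by the matching correction term in the $r$-update, not by skew-symmetry---and your caveat that $r_h^{n+1}\neq 0$ is an assumption the paper also makes tacitly.
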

\begin{proof}
	%Taking inner product with $(\mu^{n+1},d_t\phi^{n+1},\textbf{u}^{n+1},p^{n+1})$ in \eqref{dsequ1}-\eqref{dsequ4} respectively, and multiplying \eqref{dsequ5} by $2r^{n+1}$, using \eqref{triliform} and \eqref{ab}, we have
    Taking the inner products of (\ref{dsequ11}a)--(\ref{dsequ11}d) with $\mu_h^{n+1}$, $d_t\phi_h^{n+1}$, $\mathbf{u}_h^{n+1}$, and $p_h^{n+1}$, respectively, and multiplying (\ref{dsequ11}e) by $2r_h^{n+1}$, then applying \eqref{triliform} and the identity \eqref{ab}, we obtain
	\begin{eqnarray*}
		&&\frac{1}{2}(\|\textbf{u}_h^{n+1}\|_0^2-\|\textbf{u}_h^n\|_0^2+\|\textbf{u}_h^{n+1}-\textbf{u}_h^n\|_0^2)+\frac{1}{2}(\|\nabla\phi_h^{n+1}\|_0^2-\|\nabla\phi_h^n\|_0^2+\|\nabla(\phi_h^{n+1}-\phi_h^n)\|_0^2)\nonumber\\
		&&+|r_h^{n+1}|^2-|r_h^n|^2+|r_h^{n+1}-r_h^n|^2\nonumber\\
		&=&-\Delta t(\|\nabla\mu_h^{n+1}\|_0^2+\|\nabla \textbf{u}_h^{n+1}\|_0^2),
	\end{eqnarray*}
	which yields \eqref{ener}.
\end{proof}

% \begin{rem}
% It is worth noting that the term $\frac{r(t)}{\sqrt{E_1[\phi]}}$ added in front of the nonlinear coupling term in equation \eqref{1sequ} can also be used as the coefficient of the nonlinear term $g(\phi)$. In this way, the corresponding term $g(\phi)$ in equation (\ref{1sequ}e) also needs to be multiplied by this auxiliary variable. that is (2.1b) and (2.1e) changes to 
% \PY{Show the two equations}
% In addition, Theorem \ref{thm11} still holds for the corresponding fully discrete SAV-FEM scheme.
% However, to simplify the subsequent theoretical analysis, this modification scheme is not adopted in this paper. Instead, in the fully discrete scheme \eqref{dsequ11}, $g(\phi)$ is directly and explicitly handled.
% \end{rem}

\begin{rem}
The scalar coefficient $\frac{r(t)}{\sqrt{E_1[\phi]}}$
introduced in \eqref{1sequ} is not restricted to the nonlinear coupling terms and may also be incorporated into the nonlinear potential term $g(\phi)$. In this case, equations \eqref{1sequ2} and \eqref{1sequ5} become
\begin{align}
& \mu+\Delta\phi-\frac{r(t)}{\sqrt{E_1[\phi]}}g(\phi)=0,
\label{rem:eq1}\\
& r_t
=\frac{1}{2r(t)}
\left(
\frac{r(t)}{\sqrt{E_1[\phi]}}
\int_\Omega g(\phi)\phi_t\,dx
+\frac{r(t)}{\sqrt{E_1[\phi]}}
\bigl(
(\mathbf u\cdot\nabla\mathbf u,\mathbf u)
+(\mathbf u\cdot\nabla\phi,\mu)
-(\mu\nabla\phi,\mathbf u)
\bigr)
\right).
\label{rem:eq2}
\end{align}
The corresponding fully discrete SAV-FEM scheme remains linear, fully decoupled, and unconditionally energy stable. Furthermore, the same optimal-order error estimates established in Section~\ref{secError} can be obtained for this alternative formulation. However, to simplify the theoretical analysis and computational realization of the proposed SAV-FEM framework, we do not pursue this formulation in the present work. Instead, the nonlinear potential term $g(\phi)$ is treated explicitly in the fully discrete scheme \eqref{dsequ11}, while the scalar coefficient is introduced only to handle the nonlinear coupling terms. 
%This choice leads to a simpler formulation and analysis while preserving linearity, full decoupling, unconditional energy stability, and optimal-order convergence.
\end{rem}

\begin{rem}
We introduce a single scalar auxiliary variable, together with a novel update equation, to reformulate all nonlinear terms arising from the Cahn--Hilliard and Navier--Stokes equations. The proposed approach is combined with an IMEX time discretization, where the nonlinear terms are treated explicitly. This strategy shares a similar philosophy with the conventional explicit Euler method, where nonlinear terms are evaluated explicitly without the aid of an auxiliary variable. However, the conventional explicit Euler method generally fails to preserve an unconditional energy dissipation law. The proposed SAV reformulation overcomes this limitation by incorporating the nonlinear energy contributions into the auxiliary variable, thereby recovering unconditional energy stability. Furthermore, the auxiliary variable is updated explicitly, allowing the resulting scheme to remain fully decoupled while preserving the energy dissipation property. Consequently, the proposed approach achieves both unconditional energy stability and enhanced computational efficiency.
\end{rem}

\subsection{Stability analysis of the fully discrete scheme}
We assume that 
\begin{eqnarray*}
	 \phi^0\in H^2(\Omega), \ \textbf{u}^0\in \textbf{H}^2(\Omega)\cap \textbf{V}.
\end{eqnarray*}
Under these assumptions, we establish several stability estimates for the numerical solution of scheme \eqref{dsequ11} in various norms throughout this subsection.

\begin{lem}\cite{YY2024} \label{lem5.1}
	For all $m\geq0$, there exist positive constants $M_1,M_2$ such that
	\begin{eqnarray*}
		|\tilde{E}_h^{m+1}|+\Delta t\sum_{n=0}^m(\|\nabla \textbf{u}_h^{n+1}\|_0^2+\|\nabla \mu_h^{n+1}\|_0^2)
		\leq M_1,&\\
		\|\phi_h^{m+1}\|_1^2+\Delta t\sum_{n=0}^m\|\mu_h^{n+1}\|_1^2
		\leq
		M_2,&
	\end{eqnarray*}
	where \[\begin{aligned}M_1&=|\tilde{E}_h^0|,\ \  M_2=M_2^{'}+2M_1(1+\Delta t)+C\Delta t((M_2^{'}+M_1)^3+M_2^{'}), \\
		M_2^{'}&=\left( \|\phi_h^0\|_0^2+M_1\left( 1+4\Delta t+\frac{CM_1^2}{C_0}\right) \right) e^{\frac{CM_1^2}{C_0}}.\end{aligned}\]
\end{lem}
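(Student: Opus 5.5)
The first bound follows directly from Theorem \ref{thm11}. Summing \eqref{ener} from $n=0$ to $m$ telescopes to
\[
\tilde E_h^{m+1}+\Delta t\sum_{n=0}^m\big(\|\nabla\mu_h^{n+1}\|_0^2+\|\nabla\mathbf u_h^{n+1}\|_0^2\big)\le \tilde E_h^0 .
\]
Since $\tilde E_h^n\ge 0$ (it is a sum of squares), we get the first estimate with $M_1=|\tilde E_h^0|$. This also gives the pointwise bounds $\|\mathbf u_h^{n}\|_0^2\le 2M_1$, $\|\nabla\phi_h^n\|_0^2\le 2M_1$ and $|r_h^n|^2\le M_1$ for all $n$.

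For the second bound we still need $\|\phi_h^{m+1}\|_0$ and $\Delta t\sum\|\mu_h^{n+1}\|_0^2$.

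\emph{Estimate for $\|\phi_h^{m+1}\|_0$.} Take $\psi_h=\phi_h^{n+1}$ in (\ref{dsequ11}a). The identity \eqref{ab} gives the terms $\tfrac12(\|\phi_h^{n+1}\|_0^2-\|\phi_h^n\|_0^2+\|\phi_h^{n+1}-\phi_h^n\|_0^2)$. The diffusion term $\Delta t(\nabla\mu_h^{n+1},\nabla\phi_h^{n+1})$ is bounded by $\Delta t(\|\nabla\mu_h^{n+1}\|_0^2+\|\nabla\phi_h^{n+1}\|_0^2)/2$; both pieces are controlled by $M_1$ after summation, which accounts for the $4\Delta t M_1$ terms.

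The convection term $\Delta t\frac{r_h^n}{\sqrt{E_{1,h}^n}}(\mathbf u_h^n\cdot\nabla\phi_h^n,\phi_h^{n+1})$ is handled as follows. The coefficient satisfies $|r_h^n|/\sqrt{E_{1,h}^n}\le \sqrt{M_1/C_0}$, using $E_{1,h}^n\ge C_0$. Then Hölder with $L^2\cdot L^4\cdot L^4$, together with \eqref{s1} and \eqref{s3}, gives the bound
\[
C\Delta t\sqrt{M_1/C_0}\,\|\mathbf u_h^n\|_0^{1/2}\|\nabla\mathbf u_h^n\|_0^{1/2}\|\nabla\phi_h^n\|_0\|\phi_h^{n+1}\|_1 .
\]
I would split $\|\phi_h^{n+1}\|_1\le\|\phi_h^{n+1}\|_0+\|\nabla\phi_h^{n+1}\|_0$ and use Young. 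This yields a Gronwall-type term $\Delta t\frac{CM_1^2}{C_0}\|\phi_h^{n+1}\|_0^2$ (roughly, after using $\|\nabla\mathbf u_h^n\|$ summability) plus quantities bounded by $M_1$. Applying the discrete Gronwall lemma then gives $\|\phi_h^{m+1}\|_0^2\le M_2'$, with the exponential factor $e^{CM_1^2/C_0}$.

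The delicate point is arranging Young's inequality so that the Gronwall coefficient $d_n$ is summable with $\Delta t\sum d_n\le CM_1^2/C_0$. Pairing $\|\nabla\mathbf u_h^n\|_0$ with the squared factor and using $\Delta t\sum\|\nabla\mathbf u_h^n\|_0^2\le M_1$ achieves this. The case $n=0$ involves $\mathbf u_h^0=P_h\mathbf u^0$ and is covered by the $H^1$ stability in \eqref{orth2}.

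\emph{Estimate for $\Delta t\sum\|\mu_h^{n+1}\|_0^2$.} Take $\tau_h=\mu_h^{n+1}$ in (\ref{dsequ11}b):
\[
\|\mu_h^{n+1}\|_0^2=(\nabla\phi_h^{n+1},\nabla\mu_h^{n+1})+(g(\phi_h^n),\mu_h^{n+1}).
\]
The first term is bounded by $\tfrac12(\|\nabla\phi_h^{n+1}\|_0^2+\|\nabla\mu_h^{n+1}\|_0^2)$. For the second, $g(\phi)=\varepsilon^{-2}(\phi^3-\phi)$, so \eqref{s3} gives $\|g(\phi_h^n)\|_0\le C(\|\phi_h^n\|_1^3+\|\phi_h^n\|_1)$. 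Young then yields $\|\mu_h^{n+1}\|_0^2\le C(\|\phi_h^n\|_1^6+\|\phi_h^n\|_1^2)+\|\nabla\mu_h^{n+1}\|_0^2+\|\nabla\phi_h^{n+1}\|_0^2$. Multiplying by $\Delta t$, summing, and inserting $\|\phi_h^n\|_1^2\le M_2'+2M_1$ produces $C\Delta t((M_2'+M_1)^3+M_2')$ per step, consistent with the stated form of $M_2$.

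Adding the $\|\phi_h^{m+1}\|_1^2$ bound and the $\Delta t\sum\|\nabla\mu_h^{n+1}\|_0^2\le M_1$ bound completes the proof. The main obstacle is the Gronwall step for $\|\phi_h^{n+1}\|_0$, specifically the explicit convection term, where the SAV coefficient bound via $C_0$ is essential.
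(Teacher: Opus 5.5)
Your first estimate and your recovery of $\Delta t\sum_n\|\mu_h^{n+1}\|_0^2$ from (\ref{dsequ11}b), using the cubic growth of $g$, are fine. The overall route is also the one the constants in this lemma point to; the paper does not prove the lemma itself but imports it from \cite{YY2024}.

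The gap is at the step you yourself call delicate. Your Gronwall term is $\Delta t\,d_n\|\phi_h^{n+1}\|_0^2$ at the \emph{new} time level. The discrete Gronwall lemma, with the sum running up to $m$, therefore needs $\Delta t\,d_n<1$ and puts $\sigma_n=(1-\Delta t\,d_n)^{-1}$ into the exponent. The $\sigma_n=1$ variant of the Remark does not apply, because the term is not at level $n$.

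If, as you propose, $d_n$ carries $\|\nabla\mathbf u_h^n\|_0$, the only control you have is $\Delta t\|\nabla\mathbf u_h^n\|_0^2\le M_1$ for $n\ge1$.
\begin{itemize}
\item With $\|\mathbf u_h^n\|_{L^4}\le C\|\nabla\mathbf u_h^n\|_0$ you get $d_n\sim\frac{M_1}{C_0}\|\nabla\mathbf u_h^n\|_0^2$ and only $\Delta t\,d_n\le CM_1^2/C_0$. This need not be below $1$, however small $\Delta t$ is.
\item With your half-power bound, $d_n\sim\frac{M_1^{3/2}}{C_0}\|\nabla\mathbf u_h^n\|_0$ and $\Delta t\,d_n\le C\frac{M_1^2}{C_0}\Delta t^{1/2}$. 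That is a time-step restriction.
\end{itemize}
Either way the bound is not unconditional, and the factor $e^{CM_1^2/C_0}$ silently drops $\sigma_n$.

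The repair is to keep $\mathbf u_h^n$ out of the Gronwall coefficient. Test with $2\Delta t\phi_h^{n+1}$ and use $\|\mathbf u_h^n\|_{L^4}\le C\|\nabla\mathbf u_h^n\|_0$, $\|\nabla\phi_h^n\|_0^2\le 2M_1$ and Young's inequality:
\[
2\Delta t\frac{|r_h^n|}{\sqrt{E_{1,h}^n}}\big|(\mathbf u_h^n\cdot\nabla\phi_h^n,\phi_h^{n+1})\big|
\le\frac{\Delta t}{2T}\|\phi_h^{n+1}\|_1^2+CT\frac{M_1^2}{C_0}\,\Delta t\|\nabla\mathbf u_h^n\|_0^2 .
\]
Then $d_n=1/(2T)$ and $\Delta t\,d_n\le\frac12$, since $\Delta t\le T$. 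The exponential factor is an absolute constant, and the source sums to $CT\frac{M_1^2}{C_0}\big(M_1+\Delta t\|\nabla\mathbf u_h^0\|_0^2\big)$.

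Alternatively, avoid Gronwall altogether via mass conservation. Take $\psi_h=1$ in (\ref{dsequ11}a). For $n\ge1$,
\[
(\mathbf u_h^n\cdot\nabla\phi_h^n,1)=-(\nabla\cdot\mathbf u_h^n,\phi_h^n)=-(\nabla\cdot\mathbf u_h^n,\phi_h^n-\bar{\phi}_h^n)=0 .
\]
This uses $\mathbf u_h^n|_{\partial\Omega}=0$, (\ref{dsequ11}d) at step $n$, and $\phi_h^n-\bar{\phi}_h^n\in M_h$. The first step changes $(\phi_h,1)$ by at most $2\Delta tM_1$, even if $P_h\mathbf u^0$ is not discretely solenoidal. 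Poincar\'e--Wirtinger then bounds $\|\phi_h^{m+1}\|_0$ by $\|\nabla\phi_h^{m+1}\|_0$ and the mean.

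Finally, do not force a literal match with the displayed constants, because they cannot be reproduced.
\begin{itemize}
\item $\Delta t\sum_{n=0}^m\|\nabla\phi_h^{n+1}\|_0^2\le 2TM_1$, not $O(\Delta t M_1)$.
\item Summing your per-step bound for $\|\mu_h^{n+1}\|_0^2$ gives a factor $(m+1)\Delta t\le T$, not $\Delta t$.
\item The $n=0$ term brings in $\|\mathbf u^0\|_1$.
\end{itemize}
So your remarks that the diffusion term ``accounts for the $4\Delta tM_1$ terms'' and that the $\mu$-bound is ``consistent with the stated form of $M_2$'' are inaccurate. This affects only the explicit constants, which are necessarily $T$-dependent, not the substance of the lemma: bounds independent of $h$ and $\Delta t$.
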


\begin{lem}\cite{YY2024} \label{lem5.2}
	For all $m\geq0$, there exists a positive constant $M_3$ such that
	\begin{eqnarray*}
		\|\triangle_h\phi_h^{m+1}\|_0^2+\Delta t\sum_{n=0}^m\|\triangle_h\mu_h^{n+1}\|_0^2
		\leq
		M_3,
	\end{eqnarray*}
	where $M_3=\left( \|\triangle_h\phi_h^0\|_0^2+C(M_2^3+M_2)+\frac{CM_1^3}{C_0}\right) e^{\frac{CM_1^2}{C_0}}$.
\end{lem}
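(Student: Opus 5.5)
We test the $\phi$-equation (\ref{dsequ11}a) with $-\Delta t\,\triangle_h\mu_h^{n+1}$ and the $\mu$-equation (\ref{dsequ11}b) with $\Delta t\,\triangle_h^2\phi_h^{n+1}$, in the discrete sense. The goal is an $H^2$-type energy inequality for $\phi_h$, in the spirit of the $H^1$ estimate in Lemma~\ref{lem5.1}.

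\textbf{Step 1: the $\mu$-equation.} By the definition (\ref{Ah-1}) of $\triangle_h$, equation (\ref{dsequ11}b) says
\[
\mu_h^{n+1}=-\triangle_h\phi_h^{n+1}+Q_h g(\phi_h^n),
\]
where $Q_h$ is the $L^2$ projection onto $W_h$. Applying $\triangle_h$ gives
\[
\triangle_h\mu_h^{n+1}=-\triangle_h^2\phi_h^{n+1}+\triangle_h Q_h g(\phi_h^n).
\]

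\textbf{Step 2: the $\phi$-equation.} Equation (\ref{dsequ11}a) reads
\[
d_t\phi_h^{n+1}=\triangle_h\mu_h^{n+1}-s^n Q_h(\mathbf{u}_h^n\cdot\nabla\phi_h^n),\qquad s^n=r_h^n/\sqrt{E_{1,h}^n}.
\]
Test it with $\triangle_h^2\phi_h^{n+1}$, i.e.\ take the inner product of $\triangle_h$ applied to it with $\triangle_h\phi_h^{n+1}$. Then use the identity \eqref{ab}:
\[
\tfrac12\big(\|\triangle_h\phi_h^{n+1}\|_0^2-\|\triangle_h\phi_h^{n}\|_0^2+\|\triangle_h(\phi_h^{n+1}-\phi_h^n)\|_0^2\big)=\Delta t(\triangle_h\mu_h^{n+1},\triangle_h\phi_h^{n+1})-\Delta t\,s^n(\nabla(\mathbf{u}_h^n\cdot\nabla\phi_h^n),\nabla\triangle_h\phi_h^{n+1}).
\]
This is cleaner if written with $\mu$: from Step 1, $\triangle_h\phi_h^{n+1}=Q_h g(\phi_h^n)-\mu_h^{n+1}$. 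Substituting yields $-\Delta t\|\triangle_h\mu_h^{n+1}\|_0^2$ plus the cross term $\Delta t(\triangle_h\mu_h^{n+1},Q_hg(\phi_h^n))$. After one integration by parts this cross term becomes $-\Delta t(\nabla\mu_h^{n+1},\nabla Q_hg(\phi_h^n))$. It is bounded by
\[
\tfrac14\Delta t\|\triangle_h\mu_h^{n+1}\|_0^2+C\Delta t\|g(\phi_h^n)\|_0^2,
\]
and $\|g(\phi_h^n)\|_0\le C(\|\phi_h^n\|_1^3+\|\phi_h^n\|_1)\le C(M_2^{3/2}+M_2^{1/2})$ by \eqref{s3} and Lemma~\ref{lem5.1}. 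Alternatively, keep it as $\Delta t\|\nabla\mu_h^{n+1}\|_0\|\nabla g(\phi_h^n)\|_0$, which is summable by Lemma~\ref{lem5.1}. This is where the $C(M_2^3+M_2)$ in $M_3$ comes from.

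\textbf{Step 3: the convection term (main obstacle).} The term to handle is
\[
\Delta t\,s^n(\mathbf{u}_h^n\cdot\nabla\phi_h^n,\triangle_h\mu_h^{n+1}).
\]
It is bounded by $\tfrac14\Delta t\|\triangle_h\mu_h^{n+1}\|_0^2+C\Delta t|s^n|^2\|\mathbf{u}_h^n\cdot\nabla\phi_h^n\|_0^2$. The two factors are controlled as follows.
\begin{itemize}
\item Since $E_{1,h}^n\ge C_0$ and $|r_h^n|^2\le M_1$, we have $|s^n|^2\le M_1/C_0$.
\item By H\"older, $\|\mathbf{u}_h^n\cdot\nabla\phi_h^n\|_0\le\|\mathbf{u}_h^n\|_{L^4}\|\nabla\phi_h^n\|_{L^4}$.
\item By \eqref{s1}, $\|\mathbf{u}_h^n\|_{L^4}^2\le C\|\mathbf{u}_h^n\|_0\|\nabla\mathbf{u}_h^n\|_0$.
\item By a discrete Gagliardo--Nirenberg inequality, $\|\nabla\phi_h^n\|_{L^4}^2\le C\|\nabla\phi_h^n\|_0(\|\triangle_h\phi_h^n\|_0+\|\phi_h^n\|_1)$.
\end{itemize}
Combining these gives
\[
C\Delta t\frac{M_1}{C_0}M_1\,\|\nabla\mathbf{u}_h^n\|_0\big(\|\triangle_h\phi_h^n\|_0^2+M_2\big).
\]
Write $d_n=C\frac{M_1^2}{C_0}\|\nabla\mathbf{u}_h^n\|_0$. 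By Lemma~\ref{lem5.1}, $\Delta t\sum d_n\le CM_1^2/C_0\cdot\sqrt{T M_1}$, i.e.\ it is bounded. The delicate point is precisely this discrete $L^4$ interpolation with $\triangle_h$ in place of the continuous Laplacian on a nonsmooth mesh. If it fails, I would fall back on the discrete Sobolev inequality together with an inverse-free argument via Lemma~\ref{lem5.11}, or project onto the continuous Neumann Laplacian and use \eqref{s4}.

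\textbf{Step 4: Gronwall.} Sum over $n$ and absorb the $\triangle_h\mu_h$ terms on the left. Since the index of $d_n a_n$ lags by one, the Remark after the discrete Gronwall lemma applies with $\sigma_n=1$. This yields
\[
\|\triangle_h\phi_h^{m+1}\|_0^2+\Delta t\sum\|\triangle_h\mu_h^{n+1}\|_0^2\le\Big(\|\triangle_h\phi_h^0\|_0^2+C(M_2^3+M_2)+\tfrac{CM_1^3}{C_0}\Big)e^{CM_1^2/C_0},
\]
which is the claimed $M_3$.
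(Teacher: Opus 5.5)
\textbf{There is a genuine gap in Step 2, and it is exactly where the difficulty of this lemma lies.}

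\emph{The error.} Testing (\ref{dsequ11}a) with $\Delta t\,\triangle_h^2\phi_h^{n+1}$ turns the diffusion term into $\Delta t(\triangle_h\mu_h^{n+1},\triangle_h^2\phi_h^{n+1})$, not $\Delta t(\triangle_h\mu_h^{n+1},\triangle_h\phi_h^{n+1})$: you dropped one $\triangle_h$. As written, the step is also internally inconsistent. Substituting $\triangle_h\phi_h^{n+1}=Q_hg(\phi_h^n)-\mu_h^{n+1}$ into $(\triangle_h\mu_h^{n+1},\triangle_h\phi_h^{n+1})$ gives $+\|\nabla\mu_h^{n+1}\|_0^2$, not $-\|\triangle_h\mu_h^{n+1}\|_0^2$.

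\emph{The correct identity.} The right substitution is $\triangle_h^2\phi_h^{n+1}=\triangle_hQ_hg(\phi_h^n)-\triangle_h\mu_h^{n+1}$, which is your own Step 1. The convection term then also produces $-\Delta t\,s^n(\mathbf{u}_h^n\cdot\nabla\phi_h^n,\triangle_hQ_hg(\phi_h^n))$. Since $\triangle_h\mu_h^{n+1}-s^nQ_h(\mathbf{u}_h^n\cdot\nabla\phi_h^n)=d_t\phi_h^{n+1}$, everything collapses to
\[
\tfrac12\big(\|\triangle_h\phi_h^{n+1}\|_0^2-\|\triangle_h\phi_h^{n}\|_0^2+\|\triangle_h(\phi_h^{n+1}-\phi_h^n)\|_0^2\big)+\Delta t\|\triangle_h\mu_h^{n+1}\|_0^2
=\big(g(\phi_h^n),\triangle_h(\phi_h^{n+1}-\phi_h^n)\big)+\Delta t\,s^n(\mathbf{u}_h^n\cdot\nabla\phi_h^n,\triangle_h\mu_h^{n+1}).
\]
So the potential contribution is $\Delta t(d_t\phi_h^{n+1},\triangle_hQ_hg(\phi_h^n))$. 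This carries two more derivatives than the term $\Delta t(\triangle_h\mu_h^{n+1},Q_hg(\phi_h^n))$ you bounded, and $\|g(\phi_h^n)\|_0\le C(M_2^{3/2}+M_2^{1/2})$ does not control it.

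\emph{Why the obvious fixes fail.}
\begin{itemize}
\item Young's inequality against $\|\triangle_h\mu_h^{n+1}\|_0^2$ needs $\|\triangle_hQ_hg(\phi_h^n)\|_0$, a discrete $H^2$ norm of the projected cubic of a $P_1$ function. There is no discrete chain rule for this. Bounding it needs $L^\infty$ and $W^{1,4}$ control of $\phi_h^n$ through $\|\triangle_h\phi_h^n\|_0$ itself. A straightforward bound is then superlinear in $\|\triangle_h\phi_h^n\|_0^2$, so the linear Gronwall lemma does not close.
\item Absorbing the term into the numerical dissipation $\tfrac12\|\triangle_h(\phi_h^{n+1}-\phi_h^n)\|_0^2$ leaves $\sum_n\|g(\phi_h^n)\|_0^2=O(\Delta t^{-1})$ after summation, with no factor $\Delta t$.
\end{itemize}

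\emph{What an extra idea could look like.}
\begin{itemize}
\item Summation by parts in time. The boundary term $\|g(\phi_h^m)\|_0^2\le C(M_2^3+M_2)$ is harmless. The remainder $\sum_n(g(\phi_h^n)-g(\phi_h^{n-1}),\triangle_h\phi_h^n)$ must still be estimated through the scheme.
\item A pointwise bound on $\|\mu_h^{n+1}\|_0$. Test the time-differenced $\mu$-equation with $\mu_h^{n+1}$. This yields the dissipation $\Delta t\|\triangle_h\mu_h^{n+1}\|_0^2$, and the nonlinear term becomes $(d_t\phi_h^n,G^n\mu_h^{n+1})$. Here $G^n$ is quadratic in $\phi_h^n,\phi_h^{n-1}$, so $\|G^n\|_{L^3}\le C(1+M_2)$, and $\Delta t\sum\|\mu_h^{n+1}\|_1^2\le M_2$ by Lemma~\ref{lem5.1}. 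One then reads off $\|\triangle_h\phi_h^{n+1}\|_0\le\|\mu_h^{n+1}\|_0+\|g(\phi_h^n)\|_0$.
\end{itemize}

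By contrast, your Step 3, which you flagged as the main obstacle, is the routine part, given a discrete Gagliardo--Nirenberg inequality on a quasi-uniform mesh. The nonlinear potential term is the crux, and the proposal does not handle it.
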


\begin{lem}\label{lem5.3}
	For all $m\geq0$, there exists a positive constant $M_4$ such that
	\begin{eqnarray*}
		\|\nabla \textbf{u}_h^{m+1}\|_0^2
		+\Delta t\sum_{n=0}^m\|A_h\textbf{u}_h^{n+1}\|_0^2
		\leq
		M_4,
	\end{eqnarray*}
	where  $M_4=\left( \|\nabla\textbf{u}_h^0\|_0^2+\Delta t\|A_h\textbf{u}_h^0\|_0^2+\frac{CM_1M_2}{C_0}(M_1+M_3)\right) e^{\frac{CM_1^4}{C_0^2}}$.
\end{lem}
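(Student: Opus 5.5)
Take $\mathbf{v}_h=\Delta t A_h\mathbf{u}_h^{n+1}$ in (\ref{dsequ11}c). Since $A_h\mathbf{u}_h^{n+1}\in\mathbf{V}_h$ (the discrete divergence constraint (\ref{dsequ11}d) is preserved by the discrete Stokes operator), the pressure term drops out. We interpret $A_h$ on the discretely divergence-free subspace, so that $(p_h^{n+1},\nabla\cdot A_h\mathbf{u}_h^{n+1})=0$. The time-difference term becomes
\[
\tfrac12\bigl(\|\nabla\mathbf{u}_h^{n+1}\|_0^2-\|\nabla\mathbf{u}_h^n\|_0^2+\|\nabla(\mathbf{u}_h^{n+1}-\mathbf{u}_h^n)\|_0^2\bigr)
\]
by \eqref{ab} and \eqref{Ah-11}, and the viscous term gives $\Delta t\|A_h\mathbf{u}_h^{n+1}\|_0^2$. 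The two nonlinear terms both carry the coefficient $\frac{r_h^n}{\sqrt{E_{1,h}^n}}$. By Lemma \ref{lem5.1}, $|r_h^n|^2\le\tilde E_h^n\le M_1$ and $E_{1,h}^n\ge C_0$, so $\bigl|\frac{r_h^n}{\sqrt{E_{1,h}^n}}\bigr|^2\le M_1/C_0$. This is the source of the $C_0^{-1}$ factors in $M_4$.

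For the convection term, use H\"older and \eqref{s1}, \eqref{s2} (discrete analogues on $\mathbf{V}_h$):
\[
|(\mathbf{u}_h^n\cdot\nabla\mathbf{u}_h^n,A_h\mathbf{u}_h^{n+1})|\le C\|\mathbf{u}_h^n\|_{L^4}\|\nabla\mathbf{u}_h^n\|_{L^4}\|A_h\mathbf{u}_h^{n+1}\|_0\le C\|\mathbf{u}_h^n\|_0^{1/2}\|\nabla\mathbf{u}_h^n\|_0\|A_h\mathbf{u}_h^n\|_0^{1/2}\|A_h\mathbf{u}_h^{n+1}\|_0 .
\]
Young's inequality then gives $\tfrac14\|A_h\mathbf{u}_h^{n+1}\|_0^2+\tfrac18\|A_h\mathbf{u}_h^n\|_0^2+C\frac{M_1^2}{C_0^2}\|\mathbf{u}_h^n\|_0^2\|\nabla\mathbf{u}_h^n\|_0^4$. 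Write $\|\nabla\mathbf{u}_h^n\|_0^4=\|\nabla\mathbf{u}_h^n\|_0^2\cdot\|\nabla\mathbf{u}_h^n\|_0^2$. The second factor is the Gronwall quantity and the first is summable, since $\Delta t\sum\|\nabla\mathbf{u}_h^n\|_0^2\le M_1$. With $\|\mathbf{u}_h^n\|_0^2\le 2M_1$, the Gronwall exponent becomes $\frac{CM_1^4}{C_0^2}$, which matches the statement.

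The term $\|A_h\mathbf{u}_h^n\|_0^2$ at the old level is absorbed by summing over $n$. The sum $\Delta t\sum_n\|A_h\mathbf{u}_h^n\|_0^2$ shifts by one index, which is why the initial term $\Delta t\|A_h\mathbf{u}_h^0\|_0^2$ appears in $M_4$.

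For the coupling term, estimate
\[
|(\mu_h^n\nabla\phi_h^n,A_h\mathbf{u}_h^{n+1})|\le\|\mu_h^n\|_{L^\infty}\|\nabla\phi_h^n\|_0\|A_h\mathbf{u}_h^{n+1}\|_0 .
\]
Bound $\|\nabla\phi_h^n\|_0^2\le M_2$ by Lemma \ref{lem5.1}. Bound $\|\mu_h^n\|_{L^\infty}^2\le C(\|\mu_h^n\|_1^2+\|\triangle_h\mu_h^n\|_0^2)$ by a discrete analogue of \eqref{s4}; the sum $\Delta t\sum_n$ of this is at most $C(M_2+M_3)$ by Lemmas \ref{lem5.1}--\ref{lem5.2}. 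Young's inequality gives $\tfrac14\|A_h\mathbf{u}_h^{n+1}\|_0^2+C\frac{M_1}{C_0}M_2\|\mu_h^n\|_{L^\infty}^2$. After summation this contributes $\frac{CM_1M_2}{C_0}(M_1+M_3)$. The $M_1$ in that bracket enters because $\Delta t\sum\|\mu_h^n\|_1^2$ is controlled via $M_1$ and $M_2$.

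Summing from $0$ to $m$ and applying the discrete Gronwall lemma (in the form of the Remark, since the Gronwall term is at level $n$) gives the claim. The main obstacle is the $L^\infty$ bound on $\mu_h^n$. If the discrete version of \eqref{s4} is unavailable, I would instead use $\|\mu_h^n\|_{L^4}\|\nabla\phi_h^n\|_{L^4}$ with $\|\nabla\phi_h^n\|_{L^4}\le C\|\triangle_h\phi_h^n\|_0$ (bounded by $M_3$). This still fits the structure of $M_4$ up to constants.
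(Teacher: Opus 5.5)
Your proposal is correct and follows the paper's proof: you use the same test function $A_h\mathbf{u}_h^{n+1}$ in (\ref{dsequ11}c), the same bound $|r_h^n|^2/E_{1,h}^n\le M_1/C_0$, and the same absorption of the old-level $\|A_h\mathbf{u}_h^n\|_0^2$ followed by Gronwall with $\sigma_n=1$. The paper bounds the convection term via $\|\mathbf{u}_h^n\|_{L^\infty}\|\nabla\mathbf{u}_h^n\|_0$ instead of your $L^4\times L^4$ split, with the same outcome, and you are more careful than the paper about why the pressure term drops. For the coupling term the paper uses exactly your fallback, $\|\mu_h^n\|_{L^4}^2\le C\|\mu_h^n\|_1^2$ (summed to $M_2$) times $\|\nabla\phi_h^n\|_{L^4}^2\le C(M_1+M_3)$, and that is where the factor $M_2(M_1+M_3)$ comes from. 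Your primary $L^\infty$ route, as written with $\|\nabla\phi_h^n\|_0^2\le M_2$, gives $\frac{CM_1M_2}{C_0}(M_2+M_3)$, not the claimed bracket $(M_1+M_3)$. It does fit the stated $M_4$ if you use the energy bound $\|\nabla\phi_h^n\|_0^2\le 2M_1$ instead, since $M_1^2(M_2+M_3)\le M_1M_2(M_1+M_3)$ because $M_2\ge M_1$.
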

\begin{proof}
	Taking inner product with $2\Delta tA\textbf{u}_h^{n+1}$ in (\ref{dsequ11}c), we have 
	\begin{eqnarray}\label{A1}
		&&\|\nabla\textbf{u}_h^{n+1}\|_0^2-\|\nabla\textbf{u}_h^n\|_0^2
		+\|\nabla(\textbf{u}_h^{n+1}-\textbf{u}_h^n)\|_0^2
		+2\Delta t\|A\textbf{u}_h^{n+1}\|_0^2\nonumber\\
		&=&2\Delta t\frac{r_h^n}{\sqrt{E_{1,h}^n}}((\mu_h^n \nabla\phi_h^n,A\textbf{u}_h^{n+1})-(\textbf{u}_h^n\cdot\nabla\textbf{u}_h^n,A\textbf{u}_h^{n+1})).
	\end{eqnarray}
	
	% For the right-hand side terms of \eqref{A1}, thanks to \eqref{s1}-\eqref{s3}, the Young inequality and the Cauchy-Schwarz inequality, we have
    For the terms on the right-hand side of \eqref{A1}, by \eqref{s1}--\eqref{s3}, the Cauchy--Schwarz inequality, and Young's inequality, we obtain
	\begin{eqnarray*}
		\left| 2\Delta t\frac{r_h^n}{\sqrt{E_{1,h}^n}}(\mu_h^n\cdot\nabla\phi_h^n,A\textbf{u}_h^{n+1})\right| 
		&\leq&2\Delta t\frac{\left| r_h^n\right| }{\sqrt{E_{1,h}^n}}\|\mu_h^n\|_{L^4}\|\nabla\phi_h^n\|_{L^4}\|A\textbf{u}_h^{n+1}\|_0\\
		&\leq&\frac{\Delta t}{2}\|A\textbf{u}_h^{n+1}\|_0^2+\Delta t\frac{CM_1}{C_0}\|\mu_h^n\|_1^2(M_1+M_3),\\
		\left| 2\Delta t\frac{r_h^n}{\sqrt{E_{1,h}^n}}(\textbf{u}_h^n\cdot\nabla\textbf{u}_h^n,A\textbf{u}_h^{n+1})\right| 
		&\leq& 2\Delta t\frac{\left| r_h^n\right| }{\sqrt{E_{1,h}^n}}\|\textbf{u}_h^n\|_{L^\infty}\|\nabla\textbf{u}_h^n\|_0\|A\textbf{u}_h^{n+1}\|_0\\
		&\leq& C\Delta t\frac{\left| r_h^n\right| }{\sqrt{E_{1,h}^n}}\|\textbf{u}_h^n\|_0^{1/2}\|\nabla\textbf{u}_h^n\|_0\|A\textbf{u}_h^n\|_0^{1/2}\|A\textbf{u}_h^{n+1}\|_0\\
		&\leq&\frac{\Delta t}{2}\|A\textbf{u}_h^{n+1}\|_0^2+\Delta t\frac{CM_1}{C_0}\|\textbf{u}_h^n\|_0\|\nabla\textbf{u}_h^n\|_0^2\|A\textbf{u}_h^n\|_0\\
		&\leq&\frac{\Delta t}{2}\|A\textbf{u}_h^{n+1}\|_0^2+\frac{\Delta t}{2}\|A\textbf{u}_h^n\|_0^2+\Delta t\frac{CM_1^2}{C_0^2}\|\textbf{u}_h^n\|_0^2\|\nabla\textbf{u}_h^n\|_0^4.
	\end{eqnarray*}
Combining the above inequalities with \eqref{A1} and summing from $ n =0 $ to $ m $, we obtain
	\begin{eqnarray*}
		&&\|\nabla \textbf{u}_h^{m+1}\|_0^2+\frac{\Delta t}{2}\|A\textbf{u}_h^{m+1}\|_0^2
		+\frac{\Delta t}{2}\sum_{n=0}^m\|A\textbf{u}_h^{n+1}\|_0^2
		\\
		&\leq& \|\nabla\textbf{u}_h^0\|_0^2+\frac{\Delta t}{2}\|A\textbf{u}_h^0\|_0^2+\frac{CM_1^2}{C_0^2}\Delta t\sum_{n=0}^m\|\textbf{u}_h^n\|_0^2\|\nabla\textbf{u}_h^n\|_0^4\\
		&&+\frac{CM_1}{C_0}\Delta t\sum_{n=0}^m\|\mu_h^n\|_1^2(M_1+M_3),
	\end{eqnarray*}
	% which together with the Gronwall's Lemma with
	% $ \sigma_n = 1 $ for all $ n \geq 0 $, Lemma \ref{lem1} and Lemma \ref{lem2}, lead to the desired result.
    which, together with the discrete Gronwall's Lemma (with $\sigma_n=1$ for all $n\ge 0$) and Lemmas \ref{lem5.1} and \ref{lem5.2}, yields the desired result.
\end{proof}

%	\hskip\parindent
%	\section{Convergence analysis of Euler semi-implicit-SAV scheme}\label{secSemiEst}
%	\setcounter{equation}{0}
\section{Error estimates for fully discrete scheme} \label{secError}
This section is devoted to establishing the error estimates for scheme \eqref{dsequ11}. First, we derive error estimates for the phase field, velocity, and auxiliary variable based on the stability results established above. Then, an error estimate for the pressure is provided. 

By Taylor's formula with integral remainder, we have
\[
f(t^{n+1})=f(t^n)+\Delta tf_t(t^n)+\int_{t^n}^{t^{n+1}}(t^{n+1}-t)f_{tt}(t)dt,
\]
and 
\[
f(t^n)=f(t^{n+1})-\Delta tf_t(t^{n+1})+\int_{t^{n+1}}^{t^n}(t^n-t)f_{tt}(t)dt.
\]
Then we discretize equations \eqref{1sequ} at the time level as
	\begin{subequations}\label{sequ}
		\begin{align}
			(d_t\phi(t^{n+1}),\psi)&+(\nabla \mu(t^{n+1}),\psi)
			+\frac{r(t^{n+1})}{\sqrt{E_1(t^{n+1})}}(\textbf{u}(t^{n+1})\cdot\nabla\phi(t^{n+1}),\psi)\nonumber\\
			&- (R_\phi,\psi) =0,\label{sequ1}\\
			(\mu(t^{n+1}),\tau)&-(\nabla\phi(t^{n+1}),\nabla\tau)
			-(g(\phi(t^{n+1})),\tau)=0,\label{sequ2}\\
			(d_t\textbf{u}(t^{n+1}),\textbf{v})&+\frac{r(t^{n+1})}{\sqrt{E_1(t^{n+1})}}(\textbf{u}(t^{n+1})\cdot\nabla \textbf{u}(t^{n+1}),\textbf{v})+(\nabla p(t^{n+1}),\textbf{v})+(\nabla \textbf{u}(t^{n+1}),\nabla\textbf{v}) \nonumber\\
			&-\frac{r(t^{n+1})}{\sqrt{E_1(t^{n+1})}}(\mu(t^{n+1})\nabla\phi(t^{n+1}),\textbf{v})- (R_u,\textbf{v})
			=0,\label{sequ3}\\
			(\nabla\cdot \textbf{u}(t^{n+1}),q)&=0,\label{sequ5}\\
			d_tr(t^{n+1})&=\frac{1}{2r(t^n)} (g(\phi(t^n)),d_t\phi(t^{n+1}))+\frac{1}{2\sqrt{E_1(t^n)}}((\textbf{u}(t^n)\cdot\nabla \textbf{u}(t^n),\textbf{u}(t^n))
			\nonumber\\
			&+(\textbf{u}(t^n)\nabla \phi(t^n),\mu(t^n))-(\mu(t^n)\nabla \phi(t^n),\textbf{u}(t^n)))-T_1 +R_r,\label{sequ6}
		\end{align}
	\end{subequations}
	where
	\begin{eqnarray*}
		&&R_\phi=\frac{1}{\Delta t}\int_{t^n}^{t^{n+1}}(t^n-t)\phi_{tt}(t)dt,\\
		&&R_u=\frac{1}{\Delta t}\int_{t^n}^{t^{n+1}}(t^n-t)\textbf{u}_{tt}(t)dt,\\
		&&R_r=\frac{1}{\Delta t}\int_{t^n}^{t^{n+1}}(t^{n+1}-t)r_{tt}(t)dt,\\ 
		&&T_1=\frac{1}{2\Delta tr(t^n)}\left( g(\phi(t^n)),\int_{t^n}^{t^{n+1}}(t^{n+1}-t)\phi_{tt}(t)dt\right).
	\end{eqnarray*}

We impose the following regularity assumption on the solution.

\begin{assumption}\label{aassume}
Assume that the solution $(\phi,\mu,\mathbf{u})$ of system \eqref{chnsequ} satisfies
        \begin{eqnarray}
		\begin{cases}
			\textbf{u}\in L^\infty(0,T;\textbf{H}^2(\Omega)),\ \ \phi,\mu\in L^\infty(0,T;H^2(\Omega)),\\
				\phi_t\in  L^\infty(0,T;L^2(\Omega))\cap L^2(0,T;H^1(\Omega)),\\
			\textbf{u}_t\in L^\infty(0,T;\textbf{L}^2(\Omega)),\ \ 
			\mu_t\in L^\infty(0,T;H^1(\Omega)),\\
				\textbf{u}_{tt}\in L^2(0,T;\textbf{H}^{-1}(\Omega)),\ \ \phi_{tt}\in L^2(0,T;L^2(\Omega)).
		\end{cases}
	\end{eqnarray}
\end{assumption}
    
	% We impose the following assumption.
 %    \begin{assumption}
 %    We assume the solutions $ (\phi, \mu, \textbf{u}) $ of the system \eqref{chnsequ} possess the following regularity assumption
 %        \begin{eqnarray}\label{aassume}
	% 	\begin{cases}
	% 		\textbf{u}\in L^\infty(0,T;\textbf{H}^2(\Omega)),\ \ \phi,\mu\in L^\infty(0,T;H^2(\Omega)),\\
	% 			\phi_t\in  L^\infty(0,T;L^2(\Omega))\cap L^2(0,T;H^1(\Omega)),\\
	% 		\textbf{u}_t\in L^\infty(0,T;\textbf{L}^2(\Omega)),\ \ 
	% 		\mu_t\in L^\infty(0,T;H^1(\Omega)),\\
	% 			\textbf{u}_{tt}\in L^2(0,T;\textbf{H}^{-1}(\Omega)),\ \ \phi_{tt}\in L^2(0,T;L^2(\Omega)).
	% 	\end{cases}
	% \end{eqnarray}
 %    \end{assumption}
	
	The following truncation error estimates can be established, provided that the exact solutions satisfy \Cref{aassume}.
    
	\begin{lem}\cite{YY2024} \label{truncation}
		Under \Cref{aassume}, for all $  m\geq 0 $, the truncation errors 
		%$R_\phi, R_u, R_r, T_1   $ 
		satisfy
		\begin{eqnarray*}
\sum_{n=0}^m(\|R_\phi\|_{-1}^2+\|R_u\|_{-1}^2+\|T_1\|_0^2+|R_r|^2)
			\leq
			C\Delta t,
		\end{eqnarray*}
	\end{lem}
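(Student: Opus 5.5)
We bound each of the four truncation terms separately. In every case we apply the Cauchy--Schwarz inequality in time to the integral remainder and then sum over $n$. The common step is: for any Banach space $B$ and $f$ with $f_{tt}\in L^2(0,T;B)$,
\[
\Big\|\frac{1}{\Delta t}\int_{t^n}^{t^{n+1}}(t^n-t)f_{tt}(t)\,dt\Big\|_B^2
\le \frac{1}{\Delta t^2}\Big(\int_{t^n}^{t^{n+1}}(t-t^n)^2dt\Big)\Big(\int_{t^n}^{t^{n+1}}\|f_{tt}\|_B^2dt\Big)
=\frac{\Delta t}{3}\int_{t^n}^{t^{n+1}}\|f_{tt}\|_B^2dt.
\]
The same bound holds with the weight $(t^{n+1}-t)$. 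Summing from $n=0$ to $m$ gives at most $\frac{\Delta t}{3}\|f_{tt}\|_{L^2(0,T;B)}^2$.

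For $R_\phi$ we take $B=L^2$ and use $\|\cdot\|_{-1}\le C\|\cdot\|_0$ together with $\phi_{tt}\in L^2(0,T;L^2)$. For $R_u$ we take $B=\mathbf{H}^{-1}$, which is covered by $\mathbf{u}_{tt}\in L^2(0,T;\mathbf{H}^{-1})$ in \Cref{aassume}. The term $T_1$ is a scalar. We bound it by $\frac{1}{2r(t^n)}\|g(\phi(t^n))\|_0$ times the $L^2$ norm of the remainder. Since $r(t^n)=\sqrt{E_1[\phi(t^n)]}\ge\sqrt{C_0}$, the factor $1/r$ is controlled. Since $\phi\in L^\infty(0,T;H^2)\subset L^\infty(0,T;L^\infty)$ and $g$ is a cubic polynomial, $\|g(\phi(t^n))\|_0\le C$. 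The $\phi_{tt}$ estimate above then finishes $T_1$.

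The main obstacle is $R_r$, which requires $r_{tt}\in L^2(0,T)$ even though this is not listed directly in \Cref{aassume}. We will derive it from the assumptions. Since $r=\sqrt{E_1[\phi]}$, differentiating gives
\[
r_t=\frac{(g(\phi),\phi_t)}{2r},\qquad
r_{tt}=\frac{(g'(\phi)\phi_t,\phi_t)+(g(\phi),\phi_{tt})}{2r}-\frac{(g(\phi),\phi_t)^2}{4r^3}.
\]
We bound each piece using $r\ge\sqrt{C_0}$, the uniform bound $\|\phi\|_{L^\infty}\le C$ (which controls $g(\phi)$ and $g'(\phi)$), and the inequality $\|\phi_t\|_{L^4}^2\le C\|\phi_t\|_1^2$. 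This yields
\[
|r_{tt}|\le C\big(\|\phi_t\|_1^2+\|\phi_{tt}\|_0+\|\phi_t\|_0^2\big).
\]
Squaring and integrating, the middle term is in $L^2(0,T)$. The term $\|\phi_t\|_1^4$ is the delicate one. We would control it by $\|\phi_t\|_{L^4}^2\le C\|\phi_t\|_0\|\phi_t\|_1$, using $\phi_t\in L^\infty(L^2)$ together with $\phi_t\in L^2(H^1)$, which produces $\int\|\phi_t\|_1^2\,dt<\infty$. This is valid in two dimensions. In three dimensions one either uses $\mu_t\in L^\infty(H^1)$ and the equation to upgrade the regularity of $\phi_t$, or simply absorbs the issue into the regularity framework of \cite{YY2024}.

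Finally, we apply the common step with $B=\mathbb{R}$ to $R_r$ and add the four bounds. This gives $C\Delta t$, with $C$ depending on $T$, $C_0$, and the norms in \Cref{aassume}.
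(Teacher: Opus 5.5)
Your overall route is the standard one (the paper gives no proof of its own and simply defers to \cite{YY2024}). Your treatment of the common Cauchy--Schwarz step and of $R_\phi$, $R_u$ and $T_1$ is correct. The weak point is $R_r$: as written, you establish $r_{tt}\in L^2(0,T)$ only for $d=2$. When $d=3$, interpolation gives only $\|\phi_t\|_{L^4}^4\le C\|\phi_t\|_0\|\phi_t\|_1^3$. The time integral of that is not controlled by $\phi_t\in L^\infty(0,T;L^2)\cap L^2(0,T;H^1)$. Deferring to ``the regularity framework of \cite{YY2024}'' is not an argument, and the paper explicitly allows $d=3$.

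The obstacle is self-inflicted. You have already used $\|\phi\|_{L^\infty(0,T;L^\infty)}\le C$ (from $\phi\in L^\infty(0,T;H^2)$, $d\le 3$) to control $g'(\phi)$. So estimate $|(g'(\phi)\phi_t,\phi_t)|\le\|g'(\phi)\|_{L^\infty}\|\phi_t\|_0^2\le C\|\phi_t\|_0^2$ directly, instead of passing through $\|\phi_t\|_{L^4}^2\le C\|\phi_t\|_1^2$. Then
\[
|r_{tt}|\le C\big(\|\phi_t\|_0^2+\|\phi_{tt}\|_0\big),\qquad
\int_0^T|r_{tt}|^2\,dt\le C\big(T\|\phi_t\|_{L^\infty(0,T;L^2)}^4+\|\phi_{tt}\|_{L^2(0,T;L^2)}^2\big),
\]
with $C$ depending on $C_0$, $\varepsilon$ and $\|\phi\|_{L^\infty(0,T;L^\infty)}$, uniformly for $d=2,3$. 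The common step with $B=\mathbb{R}$ then finishes the proof. Your $\mu_t$ fallback could also be made rigorous: differentiating the weak form of \eqref{equ2} in time and testing with $\phi_t$ gives $\|\nabla\phi_t\|_0^2=(\mu_t-g'(\phi)\phi_t,\phi_t)$, hence $\phi_t\in L^\infty(0,T;H^1)$. That route is unnecessary, though.
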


\begin{lem}\label{lem2.1}
Let $E_1(t^n) = \int_\Omega F(\phi(t^n)) \,dx$ and $E_{1,h}^n = \int_\Omega F(\phi_h^n) \,dx$ denote the continuous and discrete energies at time $t^n$, respectively. Assume that $F(\phi)$ is Lipschitz continuous on the range of $\phi$. Then there exists a constant $C > 0$, independent of the mesh size $h$ and time step $\Delta t$, such that
\begin{eqnarray*}
\left|
\frac{1}{\sqrt{E_1(t^n)}} - \frac{1}{\sqrt{E_{1,h}^n}}
\right|
&\le& C \|e_{\phi h}^n\|_0,
\end{eqnarray*}
where $e_{\phi h}^n = \phi(t^n) - \phi_h^n$ is the phase-field error.
\end{lem}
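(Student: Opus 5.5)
The plan is to reduce the difference of inverse square roots to a difference of energies, and then bound that energy difference by the $L^2$ error using the Lipschitz hypothesis on $F$. The lower bound $E_1\ge C_0>0$ from the construction of $E_1$ (and likewise for $E_{1,h}^n$, since the same shift $C_0$ is built into $E_1[\cdot]$) prevents the denominators from degenerating.

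First we write the algebraic identity
\[
\frac{1}{\sqrt{E_1(t^n)}}-\frac{1}{\sqrt{E_{1,h}^n}}
=\frac{E_{1,h}^n-E_1(t^n)}{\sqrt{E_1(t^n)}\sqrt{E_{1,h}^n}\,\bigl(\sqrt{E_1(t^n)}+\sqrt{E_{1,h}^n}\bigr)}.
\]
Since both $E_1(t^n)\ge C_0$ and $E_{1,h}^n\ge C_0$, the denominator is bounded below by $2C_0^{3/2}$. Hence
\[
\left|\frac{1}{\sqrt{E_1(t^n)}}-\frac{1}{\sqrt{E_{1,h}^n}}\right|\le \frac{1}{2C_0^{3/2}}\,\bigl|E_1(t^n)-E_{1,h}^n\bigr|.
\]
The additive constant $C_0$ cancels in the numerator, so the difference of energies equals $\int_\Omega \bigl(F(\phi(t^n))-F(\phi_h^n)\bigr)\,dx$.

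Next we use the Lipschitz continuity of $F$ on the relevant range, with constant $L$, to obtain
\[
\bigl|E_1(t^n)-E_{1,h}^n\bigr|\le L\int_\Omega|\phi(t^n)-\phi_h^n|\,dx\le L|\Omega|^{1/2}\|e_{\phi h}^n\|_0
\]
by Cauchy--Schwarz. Combining the two displays gives the claim with $C=L|\Omega|^{1/2}/(2C_0^{3/2})$, which is independent of $h$ and $\Delta t$.

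The main obstacle is justifying the Lipschitz constant. For the quartic potential $F$ this is only valid on a bounded set that contains the values of both $\phi(t^n)$ and $\phi_h^n$. I would handle this with the mean value form $F(a)-F(b)=g(\xi)(a-b)$ and the bound $|g(\xi)|\le C(1+|a|^3+|b|^3)$. The term $\phi(t^n)$ is bounded in $L^\infty$ by Assumption~\ref{aassume} together with \eqref{s3}. For $\phi_h^n$ there are two options. One is to invoke the uniform $L^\infty$ bound announced in the introduction, for which the $H^1$ bound of Lemma \ref{lem5.1} and the bound on $\triangle_h\phi_h$ from Lemma \ref{lem5.2} give control via a discrete analogue of \eqref{s4}. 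The other is to avoid $L^\infty$ entirely: use H\"older with $\|g(\xi)\|_{L^2}\le C(1+\|\phi\|_{L^6}^3+\|\phi_h^n\|_{L^6}^3)$ and the $H^1$ bound $M_2$ from Lemma \ref{lem5.1}, which suffices since $H^1\subset L^6$ for $d\le3$. I would take the second route, because it needs only Lemma \ref{lem5.1}.
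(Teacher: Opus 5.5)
Your proposal is correct and follows the paper's proof: the identity for the difference of inverse square roots, the lower bound $2C_0^{3/2}$ on the denominator from positivity of the energies, and the Lipschitz bound with Cauchy--Schwarz giving $L|\Omega|^{1/2}\|e_{\phi h}^n\|_0$. The paper takes Lipschitz continuity as a hypothesis, stated only on the range of $\phi$ even though it is applied at values of $\phi_h^n$. Your H\"older/$L^6$ variant, using $|g(s)|\le C(1+|s|^3)$ and the $H^1$ bound of Lemma~\ref{lem5.1}, removes that hypothesis, so it is a tightening of the same argument rather than a different route.
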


\begin{proof}
Using the algebraic identity
\[
\frac{1}{\sqrt{A}} - \frac{1}{\sqrt{B}}
= \frac{B - A}{\sqrt{AB}(\sqrt{A} + \sqrt{B})},
\]
we obtain
\[
\left|
\frac{1}{\sqrt{E_1(t^n)}} - \frac{1}{\sqrt{E_{1,h}^n}}
\right|
=
\frac{|E_1(t^n) - E_{1,h}^n|}
{\sqrt{E_1(t^n)E_{1,h}^n}
\left(\sqrt{E_1(t^n)} + \sqrt{E_{1,h}^n}\right)}.
\]
By the positivity assumption $E_1(t^n), E_{1,h}^n \ge C_0' > 0$, the denominator satisfies
\[
\sqrt{E_1(t^n)E_{1,h}^n}
\left(\sqrt{E_1(t^n)} + \sqrt{E_{1,h}^n}\right)
\ge C_0' \cdot 2\sqrt{C_0'} =: C_1 > 0,
\]
so that
\[
\left|
\frac{1}{\sqrt{E_1(t^n)}} - \frac{1}{\sqrt{E_{1,h}^n}}
\right|
\le \frac{1}{C_1} |E_1(t^n) - E_{1,h}^n|.
\]
Since $F$ is Lipschitz continuous on the range of $\phi$, there exists $L>0$ such that $|F(a)-F(b)| \le L|a-b|$. Therefore,
\[
|E_1(t^n) - E_{1,h}^n|
= \left|\int_\Omega \big(F(\phi(t^n)) - F(\phi_h^n)\big)\,dx\right|
\le L \int_\Omega |\phi(t^n) - \phi_h^n|\,dx
\le L |\Omega|^{1/2} \|e_{\phi h}^n\|_0.
\]
Combining the above estimates yields the desired result with $C = L|\Omega|^{1/2}/C_1$.
\end{proof}

\begin{lem}\cite{YY2024}\label{final}
	For all $m\geq0$, there exists a positive constant $C$ such that
	\begin{eqnarray*}
		\|d_t\phi_h^{m+1}\|_1
		\leq C.
	\end{eqnarray*}
\end{lem}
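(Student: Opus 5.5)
The bound on $\|d_t\phi_h^{m+1}\|_1$ comes from the discrete Cahn--Hilliard equations (\ref{dsequ11}a)--(\ref{dsequ11}b) alone, using the stability results of Lemmas \ref{lem5.1}--\ref{lem5.3} to control the explicit right-hand sides. We treat $\delta\phi:=\phi_h^{n+1}-\phi_h^n$ as the unknown in an elliptic-type identity and test with $d_t\phi_h^{n+1}$.

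First, eliminate $\mu_h^{n+1}$. By (\ref{dsequ11}b) and the definition \eqref{Ah-1}, $\mu_h^{n+1}=-\triangle_h\phi_h^{n+1}+Q_h g(\phi_h^n)$, where $Q_h$ is the $L^2$ projection onto $W_h$. Substitute this into (\ref{dsequ11}a) and take $\psi_h=d_t\phi_h^{n+1}$. The term $(\nabla\mu_h^{n+1},\nabla d_t\phi_h^{n+1})$ contains
\[
(\nabla(-\triangle_h\phi_h^{n+1}),\nabla d_t\phi_h^{n+1})=(\triangle_h\phi_h^{n+1},\triangle_h d_t\phi_h^{n+1}),
\]
which is not sign-definite unless we use differences. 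Instead we test with $\psi_h=d_t\phi_h^{n+1}-\triangle_h d_t\phi_h^{n+1}$, so that the $H^1$ norm $\|d_t\phi_h^{n+1}\|_0^2+\|\nabla d_t\phi_h^{n+1}\|_0^2$ appears on the left. The remaining terms are $(\nabla\mu_h^{n+1},\nabla\psi_h)$ and the convective term $\frac{r_h^n}{\sqrt{E_{1,h}^n}}(\mathbf u_h^n\cdot\nabla\phi_h^n,\psi_h)$.

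Second, bound each term using $|r_h^n|\le \sqrt{M_1}$, $E_{1,h}^n\ge C_0$, and the uniform bounds $\|\phi_h^n\|_1^2\le M_2$, $\|\triangle_h\phi_h^n\|_0^2\le M_3$, $\|\nabla\mathbf u_h^n\|_0^2\le M_4$. For the convective term, Hölder with \eqref{s1}--\eqref{s3} gives
\[
\|\mathbf u_h^n\cdot\nabla\phi_h^n\|_0\le \|\mathbf u_h^n\|_{L^4}\|\nabla\phi_h^n\|_{L^4}\le C\|\nabla\mathbf u_h^n\|_0(\|\triangle_h\phi_h^n\|_0+\|\phi_h^n\|_1).
\]
This is uniformly bounded, and it pairs with $\psi_h$ through $\|d_t\phi_h^{n+1}\|_0$ and $\|\triangle_h d_t\phi_h^{n+1}\|_0$. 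Handling the pairing with $\triangle_h d_t\phi_h^{n+1}$ requires integrating by parts onto the gradient, which is one level too strong. So we go back and test only with $\psi_h=d_t\phi_h^{n+1}$ plus an $H^{-1}$-duality argument.

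Concretely, (\ref{dsequ11}a) says $d_t\phi_h^{n+1}=\triangle_h\mu_h^{n+1}-Q_h(\beta^n\mathbf u_h^n\cdot\nabla\phi_h^n)$, where $\beta^n=r_h^n/\sqrt{E_{1,h}^n}$. Hence
\[
\|\nabla d_t\phi_h^{n+1}\|_0\le \|\nabla\triangle_h\mu_h^{n+1}\|_0+C\|\nabla Q_h(\mathbf u_h^n\cdot\nabla\phi_h^n)\|_0 .
\]
This shows a pointwise-in-time $H^1$ bound needs $\triangle_h\mu_h^{n+1}\in H^1$ uniformly, i.e.\ an $L^\infty$-in-time bound one level above Lemma \ref{lem5.2}.

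The main obstacle is therefore a discrete higher-order energy estimate. Apply $d_t$ to the scheme, i.e.\ subtract step $n$ from step $n+1$, and test the differenced equation with $d_t\phi_h^{n+1}$ and $-\triangle_h d_t\phi_h^{n+1}$. This produces
\[
\tfrac12 d_t\|\nabla d_t\phi_h^{n+1}\|_0^2+\|\nabla\triangle_h d_t\phi_h^{n+1}\|_0^2
\]
on the left. The right-hand side involves $d_t$ of the explicit terms: $g'(\phi_h^n)d_t\phi_h^n$, controlled via $\|\phi_h^n\|_{L^\infty}\le C$ from \eqref{s4} and Lemma \ref{lem5.2}; $d_t\beta^n$, controlled via (\ref{dsequ11}e) and Lemma \ref{lem5.1}; and $d_t\mathbf u_h^n$, controlled by Lemma \ref{lem5.3} after a similar differenced estimate for (\ref{dsequ11}c). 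Each of these is absorbed by Young's inequality together with the summability bounds $\Delta t\sum\|\triangle_h\mu_h^{n+1}\|_0^2\le M_3$ and $\Delta t\sum\|A_h\mathbf u_h^{n+1}\|_0^2\le M_4$. The initial quantity $\|\nabla d_t\phi_h^1\|_0$ is bounded directly from the first step using $\phi^0\in H^2$. The discrete Gronwall lemma (with $\sigma_n=1$) then yields $\|\nabla d_t\phi_h^{m+1}\|_0\le C$. Combined with the $L^2$ part obtained by testing with $d_t\phi_h^{n+1}$, this gives $\|d_t\phi_h^{m+1}\|_1\le C$.
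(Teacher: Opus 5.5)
The paper gives no proof of this lemma; it is quoted from \cite{YY2024}, so I assess your argument on its own. The route you finally settle on is the natural one: difference two consecutive steps of (\ref{dsequ11}a)--(\ref{dsequ11}b), eliminate $d_t\mu_h^{n+1}=-\triangle_h d_t\phi_h^{n+1}+Q_h d_tg(\phi_h^n)$, test with $-\triangle_h d_t\phi_h^{n+1}$, and apply Gronwall. The left-hand side and the structure of the right-hand side are as you describe. It does not close as written, for two concrete reasons.

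\emph{The first step.} The differenced identity exists only for $n\ge1$, so everything rests on $\|\nabla d_t\phi_h^1\|_0\le C$. You claim this follows from $\phi^0\in H^2(\Omega)$; it does not. Eliminating $\mu_h^1$ at $n=0$ gives
\[
(I+\Delta t\,\triangle_h^2)\,d_t\phi_h^1=-\triangle_h^2\phi_h^0+\triangle_h Q_h g(\phi_h^0)-Q_h\bigl(\beta^0\,\mathbf{u}_h^0\cdot\nabla\phi_h^0\bigr),
\]
so $d_t\phi_h^1$ is a discrete analogue of $\phi_t(0)=-\triangle^2\phi^0+\cdots$. Equivalently, by your own remark, you would need $\|\nabla\triangle_h\mu_h^1\|_0\le C$. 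The resolvent $(I+\Delta t\triangle_h^2)^{-1}$ only damps modes of $-\triangle_h$ with eigenvalue $\lambda\gtrsim\Delta t^{-1/2}$. A single eigenmode with $\lambda\approx\Delta t^{-1/2}$, normalized in $H^2$ (and $h\lesssim\Delta t^{1/4}$), already gives $\|\nabla d_t\phi_h^1\|_0\sim\Delta t^{-3/4}$.

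A uniform bound requires $\|\nabla\triangle_h^2\phi_h^0\|_0\le C$. That means roughly five derivatives of $\phi^0$, compatibility conditions, and a matching discrete initialization; the Ritz choice $\phi_h^0=R_h\phi^0$ with $P_1$ elements does not obviously provide this. Assumption~\ref{aassume} implicitly forces smoother data, but it says nothing about $\nabla\triangle_h^2\phi_h^0$. Since the statement includes $m=0$, no Gronwall argument can compensate. These hypotheses must be added and used explicitly.

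\emph{The factor $d_t\beta^n$.} By (\ref{dsequ11}e), $d_t r_h^{n}=S^{n-1}/(2r_h^{n})$, where $S^{n-1}$ is the bracket on the right of (\ref{dsequ11}e) at the previous step. Lemmas \ref{lem5.1}--\ref{lem5.3}, together with $\|\mu_h^n\|_0\le C$ from (\ref{dsequ11}b) and Lemma \ref{lem5.2}, do give $|S^{n-1}|\le C(1+\|d_t\phi_h^n\|_0)$. But you also need a uniform lower bound $|r_h^n|\ge c>0$, and Lemma \ref{lem5.1} bounds $|r_h^n|$ only from above. The energy law yields only $\sum_n|r_h^{n+1}-r_h^n|^2\le M_1$, i.e.\ $\Delta t\sum_n|d_tr_h^{n+1}|^2\le M_1/\Delta t$, which is useless here. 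Positivity is not automatic, since (\ref{dsequ11}e) is a quadratic in $r_h^{n+1}$ whose roots need not stay away from zero.

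The paper silently uses $1/|r_h^{n+1}|\le C$ in the proof of Theorem~\ref{thm5.5}. You must either state it as an assumption or prove it, e.g.\ by an induction coupled with the error estimate. If you do the latter, avoid circularity, since that proof uses the present lemma.

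A smaller point: no differenced Navier--Stokes estimate is needed. The term $\beta^{n-1}d_t\mathbf{u}_h^n\cdot\nabla\phi_h^n$ enters only as forcing, and $\Delta t\sum_n\|d_t\mathbf{u}_h^n\|_0^2\le C$ follows directly from (\ref{dsequ11}c) and Lemmas \ref{lem5.1}, \ref{lem5.3}. Differencing (\ref{dsequ11}c) would only add another initial quantity, $\|d_t\mathbf{u}_h^1\|_0$, to control.
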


Define the errors
\begin{align*}
	e_{\phi h}^n&=\phi(t^n)-\phi_h^n,\ \ \Phi_{\phi h}^n=R_h\phi(t^n)-\phi_h^n,\ \
	\Theta_{\phi h}^n=\phi(t^n)-R_h\phi(t^n),\\
	e_{\mu h}^n&=\mu(t^n)-\mu_h^n,\ \ \Phi_{\mu h}^n=R_h\mu(t^n)-\mu_h^n,\ \
	\Theta_{\mu h}^n=\mu(t^n)-R_h\mu(t^n),\\
	e_{u h}^n&=\textbf{u}(t^n)-\textbf{u}_h^n,\ \ \Phi_{u h}^n=P_h\textbf{u}(t^n)-\textbf{u}_h^n,\ \
	\Theta_{u h}^n=\textbf{u}(t^n)-P_h\textbf{u}(t^n),\\
	e_{r h}^n&=r(t^n)-r_h^n,\ \ \ \ e_{p h}^n=p(t^n)-p_h^n.
\end{align*}

By subtracting \eqref{dsequ11} from \eqref{sequ}, we obtain the error equations
\begin{subequations}\label{eequ}
	\begin{align}
		(d_te_{\phi h}^{n+1},\psi_h)&+(\nabla e_{\mu h}^{n+1},\nabla\psi_h)+\left( \frac{r(t^{n+1})}
		{\sqrt{E_1(t^{n+1})}}\textbf{u}(t^{n+1})\cdot\nabla\phi(t^{n+1})
		-\frac{r_h^n}
		{\sqrt{E_{1, h}^n}}\textbf{u}_h^n\cdot\nabla\phi_h^n,\psi_h\right) \nonumber\\
		&\ \ \ +(R_\phi,\psi_h)=0,\\
		(e_{\mu h}^{n+1},\tau_h)&-(\nabla e_{\phi h}^{n+1},\nabla\tau_h)-
		(g(\phi(t^{n+1}))-g(\phi_h^n),\tau_h)=0,\\
		(d_te_{u h}^{n+1},\textbf{v}_h)&+(\nabla e_{u h}^{n+1},\nabla \textbf{v}_h) -(e_{p h}^{n+1},\nabla \cdot \textbf{v}_h)-(R_u,\textbf{v}_h)\nonumber\\
		&\ \ \ +\left( \frac{r(t^{n+1})}
		{\sqrt{E_1(t^{n+1})}}\textbf{u}(t^{n+1})\cdot\nabla \textbf{u}(t^{n+1})-\frac{r_h^n}
		{\sqrt{E_{1, h}^n}}\textbf{u}_h^n\cdot\nabla \textbf{u}_h^n,\textbf{v}_h\right)\nonumber\\
		&\ \ \ -\left( \frac{r(t^{n+1})}
		{\sqrt{E_1(t^{n+1})}}\mu(t^{n+1})\nabla\phi(t^{n+1})-\frac{r_h^n}
		{\sqrt{E_{1, h}^n}}\mu_h^n\nabla\phi_h^n,\textbf{v}_h\right) =0,\\
		(\nabla\cdot e_{u h}^{n+1},q_h)&=0,\\
		d_te_{rh}^{n+1}&=\frac{1}{2r(t^n)}(g(\phi(t^n)),d_t\phi(t^{n+1}))-\frac{1}{2r_h^{n+1}}(g(\phi_h^n),d_t\phi_h^{n+1})\nonumber\\
		&\ \ \ +\frac{1}{2\sqrt{E_1(t^n)}}((\textbf{u}(t^n)\cdot\nabla \textbf{u}(t^n),\textbf{u}(t^n)) 
		+(\textbf{u}(t^n)\cdot\nabla\phi(t^n),\mu(t^n))\nonumber\\
		&\ \ \ \ \ \ \ \ \ \ \ \ \ \ \ \ \ \ \ \ \ -(\mu(t^n)\nabla\phi(t^n),\textbf{u}(t^n)))\nonumber\\
		&\ \ \ -\frac{r_h^n}{2r_h^{n+1}\sqrt{E_{1,h}^n}}((\textbf{u}_h^n\cdot\nabla \textbf{u}_h^n,\textbf{u}_h^{n+1})
		+(\textbf{u}_h^n\cdot\nabla\phi_h^n,\mu_h^{n+1})-(\mu_h^n\nabla\phi_h^n,\textbf{u}_h^{n+1}))\nonumber\\
		&\ \ \ -T_1+R_r.
	\end{align}
\end{subequations}

% By analogy with the proofs of Lemma 3.2 and Lemma 3.4 in Reference \cite{YY2024}, with $l = 1$, we can obtain the following two conclusions.
Following the proofs of Lemmas 3.2 and 3.4 in \cite{YY2024}, with $l=1$, we can derive the following two conclusions.
\begin{lem}\label{thm4.1}
	Under \Cref{aassume}, for all $  m\geq 0 $ and $e_{\phi h}^0=0$, it holds
	\begin{eqnarray*}
		\|\Phi_{\phi h}^{m+1}\|_1^2+\Delta t
	\sum_{n=0}^m\| e_{\mu h}^{n+1}\|_1^2
	&\leq&
	C \Big{(} \Delta t^2+h^{2}+ \Delta t\sum_{n=0}^m(\|\phi(t^n)\|_2^2 +\|\nabla\textbf{u}_h^n\|_0^2\\
	&&+\|\nabla\textbf{u}_h^n\|_0^2\|\nabla\phi_h^n\|_0^2)(\|\Phi_{uh}^{n+1}\|_0^2+|e_{rh}^{n+1}|^2)\Big{)} .
	\end{eqnarray*}
\end{lem}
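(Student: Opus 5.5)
We split the errors as $e_{\phi h}=\Theta_{\phi h}+\Phi_{\phi h}$ and $e_{\mu h}=\Theta_{\mu h}+\Phi_{\mu h}$ using the Ritz projection \eqref{orth3}, so that $(\nabla\Theta_{\phi h},\nabla\psi_h)=(\nabla\Theta_{\mu h},\nabla\psi_h)=0$. Rewriting (\ref{eequ}a)--(\ref{eequ}b) in terms of $\Phi$, we take $\psi_h=\Phi_{\mu h}^{n+1}$ in (\ref{eequ}a) and $\tau_h=d_t\Phi_{\phi h}^{n+1}$ in (\ref{eequ}b), then add. With \eqref{ab} this gives
\[
\tfrac{1}{2}\big(\|\nabla\Phi_{\phi h}^{n+1}\|_0^2-\|\nabla\Phi_{\phi h}^{n}\|_0^2+\|\nabla(\Phi_{\phi h}^{n+1}-\Phi_{\phi h}^n)\|_0^2\big)+\Delta t\|\nabla\Phi_{\mu h}^{n+1}\|_0^2
\]
on the left. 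On the right, times $\Delta t$, appear:
\begin{itemize}
\item the projection term $(d_t\Theta_{\phi h}^{n+1},\Phi_{\mu h}^{n+1})$;
\item the truncation term $(R_\phi,\Phi_{\mu h}^{n+1})$;
\item the nonlinear potential term $(g(\phi(t^{n+1}))-g(\phi_h^n),d_t\Phi_{\phi h}^{n+1})$;
\item the term $(\Theta_{\mu h}^{n+1},d_t\Phi_{\phi h}^{n+1})$;
\item the convection difference tested with $\Phi_{\mu h}^{n+1}$.
\end{itemize}
To control the $L^2$ parts, we also test (\ref{eequ}a) with $\Phi_{\phi h}^{n+1}$ to get $\|\Phi_{\phi h}\|_0$, and test (\ref{eequ}b) with $\Phi_{\mu h}^{n+1}$ to bound $\|\Phi_{\mu h}^{n+1}\|_0$ by $\|\nabla\Phi_{\phi h}\|$, $\|\nabla\Phi_{\mu h}\|$ and the $g$-difference. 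This yields the full $\|\cdot\|_1$ norms.

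For the potential term we avoid $d_t\Phi_{\phi h}$ in $L^2$. Instead we use (\ref{eequ}a) with a discrete inverse Laplacian: $d_t\Phi_{\phi h}^{n+1}$ equals $\triangle_h\Phi_{\mu h}^{n+1}$ plus lower-order terms, so the pairing is moved onto $\nabla\Phi_{\mu h}^{n+1}$ via $\|\nabla(\cdot)\|$ of the $g$-difference. Then $g(\phi(t^{n+1}))-g(\phi_h^n)=[g(\phi(t^{n+1}))-g(\phi(t^n))]+[g(\phi(t^n))-g(\phi_h^n)]$. The first bracket is $O(\Delta t)$ by $\phi_t\in L^2(H^1)$. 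The second is bounded by $C(\|e_{\phi h}^n\|_1)$, using the uniform $L^\infty$ bounds of $\phi_h^n$ and $\nabla\phi_h^n$. These come from Lemmas \ref{lem5.1}, \ref{lem5.2}, \eqref{s4} and Lemma \ref{final}, and they make $g$ effectively Lipschitz with $g'$ bounded. Young's inequality absorbs $\tfrac{1}{4}\Delta t\|\nabla\Phi_{\mu h}^{n+1}\|_0^2$, and the projection and truncation pieces give $C\Delta t(h^2+\|R_\phi\|_{-1}^2)$. By Lemma \ref{truncation}, these sum to $C(h^2+\Delta t^2)$.

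The convection difference is the main obstacle. We split it as
\[
\Big(\frac{r(t^{n+1})}{\sqrt{E_1(t^{n+1})}}-\frac{r_h^n}{\sqrt{E_{1,h}^n}}\Big)\mathbf{u}_h^n\cdot\nabla\phi_h^n
+\frac{r(t^{n+1})}{\sqrt{E_1(t^{n+1})}}\big(\mathbf{u}(t^{n+1})\cdot\nabla\phi(t^{n+1})-\mathbf{u}_h^n\cdot\nabla\phi_h^n\big).
\]
For the scalar factor we use Lemma \ref{lem2.1}, $|r(t^{n+1})-r_h^n|\le|e_{rh}^{n+1}|+C\Delta t$, and $E_1\ge C_0$. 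The test with $\Phi_{\mu h}^{n+1}$ is bounded by $\|\mathbf{u}_h^n\|_{L^4}\|\nabla\phi_h^n\|_0\|\Phi_{\mu h}^{n+1}\|_{L^4}$. After Young, this produces the weight $\|\nabla\mathbf{u}_h^n\|_0^2\|\nabla\phi_h^n\|_0^2$ times $|e_{rh}^{n+1}|^2$, together with an $\|e_{\phi h}^n\|_0^2$ contribution.

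For the second part we write $\mathbf{u}(t^{n+1})-\mathbf{u}_h^n=(\mathbf{u}(t^{n+1})-\mathbf{u}(t^n))+\Theta_{uh}^n+\Phi_{uh}^n$ and $\nabla\phi(t^{n+1})-\nabla\phi_h^n$ analogously. We integrate by parts where helpful: since $\mathbf{u}$ is divergence free, $(\mathbf{w}\cdot\nabla\phi,\psi)=-(\mathbf{w}\phi,\nabla\psi)$. This lets $\Phi_{uh}$ be measured only in $L^2$, with weight $\|\phi(t^n)\|_2^2$. The $\nabla(\phi-\phi_h)$ factor paired with $\mathbf{u}_h^n$ gives the weight $\|\nabla\mathbf{u}_h^n\|_0^2$ on $\|\nabla\Phi_{\phi h}\|^2$.

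Finally we sum over $n$, use $e_{\phi h}^0=0$, and apply the discrete Gronwall lemma to the $\Phi_{\phi h}$ terms. The terms in $\|\Phi_{uh}^{n+1}\|_0^2$ and $|e_{rh}^{n+1}|^2$ are left on the right-hand side with the stated weights, since they will be closed later jointly with the velocity and $r$ estimates. Adding $\Theta_{\mu h}$ via \eqref{orth4} gives the $\|e_{\mu h}\|_1$ sum.

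The delicate point is that the weights multiplying $\Phi_{uh}$ and $e_{rh}$ must be exactly summable quantities. These are $\|\phi(t^n)\|_2^2$, $\|\nabla\mathbf{u}_h^n\|_0^2$ and $\|\nabla\mathbf{u}_h^n\|_0^2\|\nabla\phi_h^n\|_0^2$, whose sums are bounded by Lemmas \ref{lem5.1}--\ref{lem5.3}. No factor may require $\|\Phi_{uh}\|_1$, which is why the integration-by-parts step is needed.
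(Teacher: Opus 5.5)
The paper does not prove this lemma; it defers to Lemmas 3.2 and 3.4 of \cite{YY2024}. Your overall strategy is the standard one and is sound. You test with $\Phi_{\mu h}^{n+1}$ and $d_t\Phi_{\phi h}^{n+1}$, add $L^2$ tests to recover full $H^1$ norms, and remove $d_t\Phi_{\phi h}^{n+1}$ from its pairings with $g(\phi(t^{n+1}))-g(\phi_h^n)$ and $\Theta_{\mu h}^{n+1}$ by going back through (\ref{eequ}a). This is the same device the paper uses in \eqref{dtphi}. Moving these pairings onto $\nabla\Phi_{\mu h}^{n+1}$ requires $H^1$-stability of the $L^2$ projection onto $W_h$, i.e.\ a quasi-uniform mesh, which the paper also assumes tacitly.

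However, the step you call essential fails as written. The identity $(\mathbf{w}\cdot\nabla\phi,\psi)=-(\mathbf{w}\phi,\nabla\psi)$ needs $\nabla\cdot\mathbf{w}=0$ pointwise, and neither $\Theta_{uh}^n$ nor $\Phi_{uh}^n=P_h\mathbf{u}(t^n)-\mathbf{u}_h^n$ is solenoidal. Here $P_h$ is the $L^2$ projection onto the MINI space, and $\mathbf{u}_h^n$ is only weakly divergence-free against $M_h$. The leftover term $((\nabla\cdot\Phi_{uh}^n)\phi,\psi)$ therefore brings back exactly the $\|\Phi_{uh}^n\|_1$ you wanted to avoid. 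No integration by parts is needed:
\[
|(\Phi_{uh}^n\cdot\nabla\phi(t^n),\Phi_{\mu h}^{n+1})|\le\|\Phi_{uh}^n\|_0\|\nabla\phi(t^n)\|_{L^4}\|\Phi_{\mu h}^{n+1}\|_{L^4}\le C\|\phi(t^n)\|_2\|\Phi_{uh}^n\|_0\|\Phi_{\mu h}^{n+1}\|_1 .
\]
This gives the $\|\phi(t^n)\|_2^2$ weight directly, using the full $\|\Phi_{\mu h}^{n+1}\|_1$ that your $L^2$ test of (\ref{eequ}b) supplies.

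Two further justifications need correcting.

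\emph{Gradient bound.} A uniform $L^\infty$ bound on $\nabla\phi_h^n$ does not follow from Lemmas \ref{lem5.1}, \ref{lem5.2}, \eqref{s4} or Lemma \ref{final}. These give only $\|\phi_h^n\|_{L^\infty}\le C$ and $\|d_t\phi_h^{n+1}\|_1\le C$. The bound is also unnecessary. Write $\nabla\bigl(g(\phi(t^n))-g(\phi_h^n)\bigr)=g'(\phi_h^n)\nabla e_{\phi h}^n+\bigl(g'(\phi(t^n))-g'(\phi_h^n)\bigr)\nabla\phi(t^n)$ and use $\|\nabla\phi(t^n)\|_{L^4}\le C\|\phi(t^n)\|_2$; this gives $\|g(\phi(t^n))-g(\phi_h^n)\|_1\le C\|e_{\phi h}^n\|_1$.

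\emph{Time index.} $r(t^{n+1})-r_h^n=e_{rh}^n+(r(t^{n+1})-r(t^n))$, not $e_{rh}^{n+1}+O(\Delta t)$. The latter would need $|r_h^{n+1}-r_h^n|=O(\Delta t)$, which you do not have. More simply, $r/\sqrt{E_1}\equiv 1$ for the exact solution, so the scalar factor equals $e_{rh}^n/\sqrt{E_1(t^n)}+r_h^n\bigl(1/\sqrt{E_1(t^n)}-1/\sqrt{E_{1,h}^n}\bigr)$, as in the paper's splitting of $H_2$. The index shift is harmless because $e_{rh}^0=0$.
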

\begin{lem}\label{r}
	Under \Cref{aassume},
	for all $m\geq0$ and $e_{rh}^0=0$, it holds
	\begin{eqnarray*}
		|e_{rh}^{m+1}|^2
		&\leq&
		C\Big{(}\Delta t^2+h^{2}+\frac{\Delta t}{4}\sum_{n=0}^m\|e_{\mu h}^{n+1}\|_1^2+\frac{\Delta t}{2}\sum_{n=0}^m\|\nabla\Phi_{uh}^{n+1}\|_0^2\\
		&&+C \Delta t\sum_{n=0}^m(\|\phi(t^n)\|_2^2 +\|\nabla\textbf{u}_h^n\|_0^2(\|\phi_h^n\|_1^2+\|\textbf{u}_h^n\|_0^2)+\|\mu_h^{n+1}\|_1^2\\
		&&+(\|\phi(t^n)\|_2^2+\|\nabla\textbf{u}_h^n\|_0^2)\|\mu(t^n)\|_1^2+\|\nabla\phi(t^n)\|_0^2\|\nabla\textbf{u}(t^n)\|_0^2\\
		&&+\|\nabla\phi_h^n\|_0^2\|\mu_h^n\|_1^2+\|\nabla\textbf{u}(t^n)\|_0^2\|\mu_h^n\|_1^2+\|\nabla\textbf{u}_h^{n+1}\|_0^2\\
		&&+\|\phi_t\|^2_{L^\infty(0,T;H^1(\Omega))})(
		\|\Phi_{\phi h}^{n+1}\|_1^2+\|\Phi_{uh}^{n+1}\|_0^2)\Big{)}.
	\end{eqnarray*}
\end{lem}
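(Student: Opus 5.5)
This estimate comes out of the error equation (\ref{eequ}e) for the auxiliary variable. I would multiply it by $2\Delta t\, e_{rh}^{n+1}$, use identity \eqref{ab} on the left to produce $|e_{rh}^{n+1}|^2-|e_{rh}^n|^2+|e_{rh}^{n+1}-e_{rh}^n|^2$, and then bound each right-hand-side group by Young's inequality. The aim is for every bound to have the form (small multiple of $\|e_{\mu h}^{n+1}\|_1^2$ or $\|\nabla\Phi_{uh}^{n+1}\|_0^2$) $+$ (coefficient)$\times$($\|\Phi_{\phi h}^{n+1}\|_1^2+\|\Phi_{uh}^{n+1}\|_0^2$ or $|e_{rh}^{n+1}|^2$) $+$ $O(\Delta t^2+h^2)$. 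Summing from $n=0$ to $m$ with $e_{rh}^0=0$ then gives the stated bound. The terms involving $|e_{rh}^{n+1}|^2$ with bounded coefficients are absorbed into the generic constant or the coefficient list.

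\textbf{The potential term.} I would split
\[
\frac{g(\phi(t^n))}{r(t^n)}-\frac{g(\phi_h^n)}{r_h^{n+1}}
=\frac{g(\phi(t^n))-g(\phi_h^n)}{r(t^n)}+g(\phi_h^n)\Big(\frac1{r(t^n)}-\frac1{r_h^{n+1}}\Big),
\]
and write $d_t\phi(t^{n+1})-d_t\phi_h^{n+1}=d_t\Theta_{\phi h}^{n+1}+d_t\Phi_{\phi h}^{n+1}$.
\begin{itemize}
\item The difference $g(\phi(t^n))-g(\phi_h^n)$ is controlled by $\|e_{\phi h}^n\|_0$. This needs the uniform $L^\infty$ bounds on $\phi_h^n$, which follow from Lemmas \ref{lem5.1}--\ref{lem5.2} and \eqref{s4}. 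Together with Lemma \ref{final}, it gives contributions of the form $\|\Phi_{\phi h}^{n}\|_1^2+h^2$.
\item The term $d_t\Phi_{\phi h}^{n+1}$ is the delicate one. I would not bound it directly. Instead I would test (\ref{eequ}a) with a suitable function, or use (\ref{eequ}a) in $H^{-1}$ form, to replace it by $\nabla e_{\mu h}^{n+1}$ and the convective errors. This is where $\tfrac{\Delta t}{4}\sum\|e_{\mu h}^{n+1}\|_1^2$ and the coefficients $\|\phi(t^n)\|_2^2$, $\|\nabla\mathbf u_h^n\|_0^2\|\phi_h^n\|_1^2$ on the right arise.
\item The term $1/r(t^n)-1/r_h^{n+1}$ is bounded by $C|e_{rh}^{n+1}|+C|r(t^{n+1})-r(t^n)|$. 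This needs a positive lower bound on $r_h^{n+1}$; if no such bound is available, I would assume it as in \cite{YY2024}.
\end{itemize}

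\textbf{The coupling terms.} For the trilinear group, I would first rewrite the exact terms, which sum to zero by \eqref{triliform} and the cancellation $(\mathbf u\cdot\nabla\phi,\mu)=(\mu\nabla\phi,\mathbf u)$. The whole group then equals minus the discrete one, which I would expand as differences. Each difference is telescoped, for example $\mathbf u_h^n\cdot\nabla\mathbf u_h^n\cdot\mathbf u_h^{n+1}$ against zero using $(\mathbf u_h^n\cdot\nabla\mathbf u_h^n,\mathbf u_h^{n+1})=(\mathbf u_h^n\cdot\nabla\mathbf u_h^n,\mathbf u_h^{n+1}-\mathbf u(t^{n+1}))+\dots$, and estimated with \eqref{s1}, \eqref{s3}, the $L^4$ interpolation, and the projection bounds \eqref{orth2}, \eqref{orth4}. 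This produces the products $\|\nabla\mathbf u_h^n\|_0^2\|\mathbf u_h^n\|_0^2$, $\|\nabla\phi_h^n\|_0^2\|\mu_h^n\|_1^2$, $\|\nabla\mathbf u(t^n)\|_0^2\|\mu_h^n\|_1^2$ and so on, multiplying $\|\Phi_{\phi h}^{n+1}\|_1^2+\|\Phi_{uh}^{n+1}\|_0^2$, plus $\tfrac{\Delta t}{2}\|\nabla\Phi_{uh}^{n+1}\|_0^2$. The factor $r_h^n/(r_h^{n+1}\sqrt{E_{1,h}^n})$ is bounded using Lemma \ref{lem5.1}, $E_{1,h}^n\ge C_0$, and Lemma \ref{lem2.1}.

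\textbf{Remaining terms and main obstacle.} The truncation terms $T_1$ and $R_r$ give $C\Delta t^2$ by Lemma \ref{truncation}, and $\Theta$-terms give $Ch^2$. The main obstacle I expect is the bound on $(g(\phi_h^n),d_t\Phi_{\phi h}^{n+1})$ without losing a factor of $\Delta t^{-1}$. My first attempt would be to introduce $w_h$ solving $-\Delta_h w_h=P_h g(\phi_h^n)-\overline{g}$ (mean removed, using mass conservation of $\Phi_{\phi h}$) and pair it with (\ref{eequ}a).
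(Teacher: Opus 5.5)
Your plan is correct and follows the skeleton the paper relies on. The paper defers this proof to \cite{YY2024}, but the same mechanics appear in $G_3$ and $G_7$--$G_9$ of the proof of Theorem~\ref{thm5.5}. You test (\ref{eequ}e) with $2\Delta t\,e_{rh}^{n+1}$. You pair the difference of the potential coefficients with $d_t\phi_h^{n+1}$, using Lemma~\ref{final} and the $L^\infty$ bound on $\phi_h^n$. You convert $d_te_{\phi h}^{n+1}$ into $\nabla e_{\mu h}^{n+1}$ and convective errors through (\ref{eequ}a). Then you sum and close with Gronwall.

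Where you genuinely differ is the coupling group. The paper handles the three exact/discrete pairs separately. It splits each into a difference of trilinear terms plus the coefficient difference $\frac{r_h^{n+1}}{\sqrt{E_1(t^n)}}-\frac{r_h^n}{\sqrt{E_{1,h}^n}}$ times the discrete term. That extra piece needs Lemma~\ref{lem2.1} and the increment $r_h^{n+1}-r_h^n$ (hence the $|r_t|$ terms). It is also the source of coefficients such as $\|\mu_h^{n+1}\|_1^2$ and $\|\nabla\mathbf u_h^{n+1}\|_0^2$ in $G_7$ and $G_9$.

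You instead use that the exact group vanishes identically, by \eqref{triliform} and the pointwise identity $(\mathbf u\cdot\nabla\phi)\mu=\mu\nabla\phi\cdot\mathbf u$. This removes that piece: the whole contribution becomes the bounded factor $\frac{r_h^n}{r_h^{n+1}\sqrt{E_{1,h}^n}}$ times a pure consistency error of the discrete group. That is shorter and still yields an estimate of the stated form.

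Two things to tighten:
\begin{itemize}
\item The $C\Delta t\,|e_{rh}^{n+1}|^2$ terms cannot go into the coefficient list, which multiplies only $\|\Phi_{\phi h}^{n+1}\|_1^2+\|\Phi_{uh}^{n+1}\|_0^2$. They must be removed by the discrete Gronwall lemma for $\Delta t$ small, and you should say so explicitly.
\item Your auxiliary $w_h$ is unnecessary. Test (\ref{eequ}a) directly with the $L^2$ projection (or $R_h$) of the potential coefficient. In the $R_h$ case, bound the remainder by $Ch\,\|d_t\Phi_{\phi h}^{n+1}\|_0\le Ch$ via Lemma~\ref{final}. This already gives $\frac{\Delta t}{8}\|e_{\mu h}^{n+1}\|_1^2+C\Delta t\,|e_{rh}^{n+1}|^2$ plus truncation, projection and convective terms.
\end{itemize}

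The lower bound on $|r_h^{n+1}|$ that you flag is likewise not proved in the paper; it is used tacitly as $1/|r_h^{n+1}|\le C$ in the estimate of $I_2$. Your explicit assumption therefore puts you on the same footing.
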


Next, we prove the error estimation for the velocity $\textbf{u}$.
\begin{lem}\label{lemu}
	Under \Cref{aassume}, for all $  m\geq 0 $ and $e_{u h}^0=0$, it holds
	\begin{eqnarray*}
		\|\Phi_{u h}^{m+1}\|_0^2+\Delta t
	\sum_{n=0}^m\|\nabla \Phi_{u h}^{n+1}\|_0^2
	&\leq&
	C \Big{(} \Delta t^2+h^{2}+\frac{\Delta t}{4}\sum_{n=0}^m\|e_{\mu h}^{n+1}\|_1^2\\
    &&+ \Delta t\sum_{n=0}^m(\|\phi(t^n)\|_2^2+\|\mu_h^n\|_1^2\|\nabla\phi_h^n\|_0^2+\|\nabla\textbf{u}(t^n)\|_0^2\\
    &&+\|\textbf{u}_h^n\|_0^2\|\nabla\textbf{u}_h^n\|_0^2)(\|\Phi_{\phi h}^{n+1}\|_0^2+|e_{rh}^{n+1}|^2)\Big{)} .
	\end{eqnarray*}
\end{lem}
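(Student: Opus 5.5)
We test the velocity error equation (\ref{eequ}c) with $\mathbf{v}_h=2\Delta t\,\Phi_{uh}^{n+1}$ and use (\ref{eequ}d) to remove the pressure. The splitting $e_{uh}=\Theta_{uh}+\Phi_{uh}$ then yields a discrete energy inequality for $\Phi_{uh}$.

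Since $P_h$ is the $L^2$ projection, $(d_t\Theta_{uh}^{n+1},\Phi_{uh}^{n+1})=0$. The identity \eqref{ab} then gives
\[
\|\Phi_{uh}^{n+1}\|_0^2-\|\Phi_{uh}^n\|_0^2+\|\Phi_{uh}^{n+1}-\Phi_{uh}^n\|_0^2+2\Delta t\|\nabla\Phi_{uh}^{n+1}\|_0^2 .
\]
The remaining linear terms are handled as follows.
\begin{itemize}
\item $(\nabla\Theta_{uh}^{n+1},\nabla\Phi_{uh}^{n+1})$ is bounded by \eqref{orth2}, giving $Ch^2$ plus a small multiple of $\|\nabla\Phi_{uh}^{n+1}\|_0^2$.
\item In the pressure term, $\Phi_{uh}^{n+1}$ is not exactly divergence free. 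Using (\ref{eequ}d) we replace $e_{ph}$ by $p(t^{n+1})-\rho_h p(t^{n+1})$ with $\rho_h$ an $L^2$ projection onto $M_h$; this contributes $Ch^2\|p\|_1^2$.
\item $(R_u,\Phi_{uh}^{n+1})$ is bounded by $\|R_u\|_{-1}\|\nabla\Phi_{uh}^{n+1}\|_0$, and Lemma \ref{truncation} turns it into $C\Delta t^2$ after summation.
\end{itemize}

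The core of the proof is the two nonlinear differences. In each, write $\frac{r(t^{n+1})}{\sqrt{E_1(t^{n+1})}}$ minus $\frac{r_h^n}{\sqrt{E_{1,h}^n}}$ times a product as a telescoping sum of four kinds of pieces.
\begin{enumerate}
\item Time shifts $t^{n+1}\to t^n$ in the exact quantities. These are bounded through $\mathbf{u}_t,\phi_t,\mu_t$ in Assumption \ref{aassume} and give $C\Delta t^2$.
\item The coefficient difference, split as $\frac{e_{rh}^n}{\sqrt{E_1(t^n)}}$ plus $r_h^n\big(\frac{1}{\sqrt{E_1(t^n)}}-\frac{1}{\sqrt{E_{1,h}^n}}\big)$. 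The second part is bounded by Lemma \ref{lem2.1}, using $|r_h^n|\le\sqrt{M_1}$ and $\|e_{\phi h}^n\|_0\le\|\Phi_{\phi h}^n\|_0+Ch^2$.
\item The velocity differences $e_{uh}^n\cdot\nabla\mathbf{u}(t^n)$ and $\mathbf{u}_h^n\cdot\nabla e_{uh}^n$.
\item The differences $e_{\mu h}^n\nabla\phi(t^n)$ and $\mu_h^n\nabla e_{\phi h}^n$.
\end{enumerate}

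Each piece is paired with $\Phi_{uh}^{n+1}$ using H\"older's inequality with the $L^4/L^4/L^2$ or $L^\infty/L^2/L^2$ splittings from \eqref{s1}--\eqref{s4}. Young's inequality then puts $\frac{1}{4}\Delta t\|\nabla\Phi_{uh}^{n+1}\|_0^2$ on the left. The weights that appear are the uniform bounds from Lemmas \ref{lem5.1}--\ref{lem5.3} and the quantities $\|\phi(t^n)\|_2^2$, $\|\mu_h^n\|_1^2\|\nabla\phi_h^n\|_0^2$, $\|\nabla\mathbf{u}(t^n)\|_0^2$ and $\|\mathbf{u}_h^n\|_0^2\|\nabla\mathbf{u}_h^n\|_0^2$. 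These are exactly the coefficients multiplying $(\|\Phi_{\phi h}\|_0^2+|e_{rh}|^2)$ in the statement.

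Two terms need particular care.
\begin{itemize}
\item In the convection term $\mathbf{u}_h^n\cdot\nabla e_{uh}^n$ with $\Phi_{uh}^n$, use the skew-symmetric form behind \eqref{triliform} to move the gradient onto $\Phi_{uh}^{n+1}$. Writing $\Phi_{uh}^n=\Phi_{uh}^{n+1}-(\Phi_{uh}^{n+1}-\Phi_{uh}^n)$ makes the leading part vanish. The remainder is absorbed by the numerical dissipation $\|\Phi_{uh}^{n+1}-\Phi_{uh}^n\|_0^2$ together with $\|\mathbf{u}_h^n\|_{L^\infty}$, which is controlled via \eqref{s2} and Lemma \ref{lem5.3}.
\item In the term $\mu_h^n\nabla e_{\phi h}^n$ one cannot bound $\nabla\Phi_{\phi h}$ directly. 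Instead, integrate by parts, or equivalently write $-(\mu\nabla\phi,\mathbf{v})=(\phi\nabla\mu,\mathbf{v})$ for divergence-free $\mathbf{v}$ up to the discrete divergence correction. The error then falls on $\|\Phi_{\phi h}^n\|_0$, while the chemical-potential error $e_{\mu h}^n$ is absorbed into $\frac{\Delta t}{4}\|e_{\mu h}^{n+1}\|_1^2$ after an index shift. This accounts for that term on the right of the statement.
\end{itemize}

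I expect the main obstacle to be this last step: keeping only $L^2$ norms of $\Phi_{\phi h}$ and passing the mismatched time indices ($n$ versus $n+1$) into the stated form. Index shifts are harmless after summation, since $e^0=0$. Finally, sum from $n=0$ to $m$, use $e_{uh}^0=0$, $\Phi_{uh}^0=0$ and $h^4\le h^2$, and collect terms. We do not apply Gronwall here; the coupled terms are left on the right, as in the statement, to be closed later together with Lemmas \ref{thm4.1} and \ref{r}.
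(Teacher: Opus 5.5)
\textbf{A gap in the closing step.} Your setup matches the paper: you test with $2\Delta t\,\Phi_{uh}^{n+1}$, use $L^2$-orthogonality to drop $(d_t\Theta_{uh}^{n+1},\Phi_{uh}^{n+1})$, and telescope the nonlinear differences into time shifts, coefficient errors and field errors. The gap is in how you close. You say you will not use Gronwall and will leave the coupled terms on the right ``as in the statement''. But the right-hand side of the statement contains no $\Phi_{uh}$ at all, while your splitting unavoidably produces self-coupling terms in $\Phi_{uh}$ with $O(1)$ coefficients. The clearest one comes from your piece $e_{uh}^n\cdot\nabla\mathbf{u}(t^n)$:
\[
2\Delta t\,\bigl|(\Phi_{uh}^n\cdot\nabla\mathbf{u}(t^n),\Phi_{uh}^{n+1})\bigr|\le \frac{\Delta t}{8}\|\nabla\Phi_{uh}^{n+1}\|_0^2+C\Delta t\,\|\mathbf{u}(t^n)\|_2^2\,\|\Phi_{uh}^n\|_0^2 .
\]
This term has no skew-symmetric structure: replacing $\Phi_{uh}^n$ by $\Phi_{uh}^{n+1}$ leaves $(\Phi_{uh}^{n+1}\cdot\nabla\mathbf{u},\Phi_{uh}^{n+1})$, which is in general nonzero. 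Its constant is not small, so neither the dissipation $\Delta t\|\nabla\Phi_{uh}^{n+1}\|_0^2$ nor Poincar\'e's inequality can absorb it. It must be removed with the discrete Gronwall lemma, with $d_n$ bounded through \Cref{aassume} and Lemmas~\ref{lem5.1}--\ref{lem5.3}. This is exactly how the paper finishes: its summed inequality still carries $\|\Phi_{uh}^{n+1}\|_0^2$ on the right, and the proof ends with ``\dots and Gronwall's lemma''. Without this step you only obtain a weaker inequality with an extra $C\Delta t\sum_n\|\Phi_{uh}^n\|_0^2$ on the right.

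\textbf{Further points.} First, in the term $\mathbf{u}_h^n\cdot\nabla\Phi_{uh}^n$ you absorb $C\Delta t\|\mathbf{u}_h^n\|_{L^\infty}^2\|\Phi_{uh}^{n+1}-\Phi_{uh}^n\|_0^2$ into the numerical dissipation. That needs $C\Delta t\|\mathbf{u}_h^n\|_{L^\infty}^2\le 1$. Lemma~\ref{lem5.3} controls $\|A_h\mathbf{u}_h^n\|_0$ only in $\ell^2$ in time, so \eqref{s2} gives just $\Delta t\|\mathbf{u}_h^n\|_{L^\infty}^2\le C(M_1M_4)^{1/2}\Delta t^{1/2}$, which is an unstated step-size restriction. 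Since Gronwall is needed anyway, it is simpler to bound this term by $\|\mathbf{u}_h^n\|_{L^4}\|\nabla\Phi_{uh}^{n+1}\|_0\|\Phi_{uh}^n\|_{L^4}$ and send $\|\nabla\mathbf{u}_h^n\|_0^4\|\Phi_{uh}^n\|_0^2$ into the Gronwall sum.

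Second, your remark that $\nabla\Phi_{\phi h}$ cannot be bounded directly is not accurate. The paper estimates $(\mu_h^n\nabla e_{\phi h}^n,\Phi_{uh}^{n+1})$ with $\|\nabla\Phi_{\phi h}^n\|_0+\|\nabla\Theta_{\phi h}^n\|_0$ and relies on Lemma~\ref{thm4.1}, which controls $\|\Phi_{\phi h}\|_1$. Your alternative is still legitimate: rewrite $-(\mu\nabla\phi,\mathbf{v})=(\phi\nabla\mu,\mathbf{v})+(\phi\mu,\nabla\cdot\mathbf{v})$ \emph{before} taking differences. This yields only $\|e_{\phi h}^n\|_0$, which agrees better with the $\|\Phi_{\phi h}^{n+1}\|_0^2$ in the statement. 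It uses $\mu\in L^\infty(0,T;H^2(\Omega))$ and the uniform $L^\infty$ bound on $\phi_h^n$ from Lemma~\ref{lem5.2} and \eqref{s4}. Integrating by parts directly in $(\mu_h^n\nabla e_{\phi h}^n,\cdot)$ is not equivalent, because it puts the derivative on $\mu_h^n$.

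Third, your pressure step requires $(\nabla\cdot\Phi_{uh}^{n+1},q_h)=0$, i.e.\ that $P_h$ map into the discretely divergence-free subspace. It also requires $p\in H^1$, which \Cref{aassume} does not provide. The paper simply omits this term.
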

\begin{proof}
Taking $\textbf{v}_h=2\Delta t\Phi_{u h}^{n+1}$ in (\ref{eequ}c) and using the projection \eqref{orth1}, we obtain
\begin{eqnarray}\label{2error}
	&&\|\Phi_{uh}^{n+1}\|_0^2-\|\Phi_{uh}^n\|_0^2+\|\Phi_{uh}^{n+1}-\Phi_{uh}^n\|_0^2+\Delta t(\|\nabla\Phi_{u h}^{n+1}\|_0^2+\|\nabla e_{u h}^{n+1}\|_0^2)
	\nonumber\\
	&=&\Delta t\|\nabla\Theta_{u h}^{n+1}\|_0^2+2\Delta t(R_u,\Phi_{u h}^{n+1})\nonumber\\
	&&
	+2\Delta t\left( \left( \frac{r(t^{n+1})}
	{\sqrt{E_1(t^{n+1})}}\mu(t^{n+1})\nabla\phi(t^{n+1})-\frac{r_h^n}
	{\sqrt{E_{1, h}^n}}\mu_h^n\nabla\phi_h^n,\Phi_{u h}^{n+1}\right) \right.\nonumber\\
	&&-\left. \left(\frac{r(t^{n+1})}{\sqrt{E_1(t^{n+1})}}\textbf{u}(t^{n+1})\cdot\nabla \textbf{u}(t^{n+1})-\frac{r_h^n}
	{\sqrt{E_{1, h}^n}}\textbf{u}_h^n\cdot\nabla \textbf{u}_h^n,\Phi_{u h}^{n+1}\right)  \right) \nonumber\\
	&=&\Delta t\|\nabla\Theta_{u h}^{n+1}\|_0^2+\sum_{i=1}^{3}H_i.
\end{eqnarray}	

We now estimate the terms on the right-hand side of \eqref{2error} one by one. For $H_1$ and $H_2$, it follows
\begin{eqnarray*}
	|H_1|&=&|2\Delta t(R_u,\Phi_{uh}^{n+1})|\leq\frac{\Delta t}{3}\|\nabla\Phi_{uh}^{n+1}\|_0^2+C\Delta t\|R_u\|_{-1}^2,
\end{eqnarray*}
\begin{eqnarray*}
	|H_2|&=&\left| 2\Delta t\left( \frac{r(t^{n+1})}
	{\sqrt{E_1(t^{n+1})}}\mu(t^{n+1})\nabla\phi(t^{n+1})-\frac{r_h^n}
	{\sqrt{E_{1, h}^n}}\mu_h^n\nabla\phi_h^n,\Phi_{u h}^{n+1}\right)\right| \\
	&=&2\Delta t \Bigg{|} (\mu(t^{n+1})\nabla\phi(t^{n+1}),\Phi_{u h}^{n+1})-(\mu(t^n)\nabla\phi(t^n),\Phi_{u h}^{n+1})\\
	&&+(\mu(t^n)\nabla\phi(t^n),\Phi_{u h}^{n+1})
	-(\mu_h^n\nabla\phi_h^n,\Phi_u^{n+1})
	+\frac{e_{rh}^n}{\sqrt{E_1(t^n)}}(
	\mu_h^n\nabla\phi_h^n,\Phi_{u h}^{n+1})\\
	&&\left.
	+r_h^n\left( \frac{1}{\sqrt{E_1(t^n)}}-\frac{1}{\sqrt{E_{1,h}^n}}\right) (
	\mu_h^n\nabla\phi_h^n,\Phi_{u h}^{n+1})\right|.
\end{eqnarray*}
%Thanks to \eqref{s1}, \eqref{s3} and the Young inequality, we have
By \eqref{s1}, \eqref{s3}, and Young's inequality, we have
\begin{eqnarray*}
	&&2\Delta t|(\mu(t^{n+1})\nabla\phi(t^{n+1}),\Phi_{u h}^{n+1})-(\mu(t^n)\nabla\phi(t^n),\Phi_{u h}^{n+1})| \\
	&=&2\Delta t|((\mu(t^{n+1})-\mu(t^n))\nabla\phi(t^{n+1})+\mu(t^n)\nabla(\phi(t^{n+1})-\phi(t^n)),\Phi_{u h}^{n+1})|\\
	&\leq&\frac{\Delta t}{12}\|\nabla
	\Phi_{u h}^{n+1}\|_0^2+C\Delta t^3(\|\mu_t\|_{L^\infty(0,T;H^1(\Omega))}^2\|\nabla\phi(t^{n+1})\|_0^2+\|\mu(t^n)\|_1^2\|\phi_t\|_{L^\infty(0,T;H^1(\Omega))}^2),\\
	&&2\Delta t|(\mu(t^n)\nabla\phi(t^n),\Phi_{uh}^{n+1})
	-(\mu_h^n\nabla\phi_h^n,\Phi_{uh}^{n+1})| \\
	&=&2\Delta t|(e_{\mu h}^n\nabla\phi(t^n),\Phi_{uh}^{n+1})
	+(\mu_h^n\nabla e_{\phi h}^n,\Phi_{uh}^{n+1})|\\
	&\leq&\frac{\Delta t}{12}\|\nabla \Phi_{u h}^{n+1}\|_0^2+\frac{\Delta t}{8}\| e_{\mu h}^n\|_1^2
	+C\Delta t(\|\mu_h^n\|_1^2+\|\phi(t^n)\|_2^2)(\|\nabla \Phi_{\phi h}^n\|_0^2\\
    &&+\|\nabla \Theta_{\phi h}^n\|_0^2+\|\Phi_{uh}^{n+1}\|_0^2),\\
	&&2\Delta t\left| \frac{e_{rh}^n}{\sqrt{E_1(t^n)}}(
	\mu_h^n\nabla\phi_h^n,\Phi_{u h}^{n+1})\right| \\
    &\leq& 2\Delta t\frac{|e_{rh}^n|}{\sqrt{C_0}}\|\mu_h^n\|_1\|\nabla\phi_h^n\|_0\|\nabla
	\Phi_{uh}^{n+1}\|_0\\
	&\leq&\frac{\Delta t}{12}\|\nabla
	\Phi_{uh}^{n+1}\|_0^2+C\Delta t\|\mu_h^n\|_1^2\|\nabla\phi_h^n\|_0^2|e_{rh}^n|^2,
\end{eqnarray*}
and using Lemma \ref{lem2.1}, it follows
\begin{eqnarray*}	
	&&2\Delta t\left| r_h^n\left( \frac{1}{\sqrt{E_1(t^n)}}-\frac{1}{\sqrt{E_{1,h}^n}}\right) (
	\mu_h^n\nabla\phi_h^n,\Phi_{u h}^{n+1})\right|\\
	&\leq&C\Delta t\|e_{\phi h}^n\|_0\|\mu_h^n\|_1\|\nabla\phi_h^n\|_0\|\nabla
	\Phi_{u h}^{n+1}\|_0\\
	&\leq&\frac{\Delta t}{12}\|\nabla
	\Phi_{u h}^{n+1}\|_0^2+C\Delta t\|\mu_h^n\|_1^2\|\nabla\phi_h^n\|_0^2(\| \Phi_{\phi h}^n\|_0^2+\|\Theta_{\phi h}^n\|_0^2).
\end{eqnarray*}
Thus, it holds
\begin{eqnarray*}
	|H_2|&\leq& \frac{\Delta t}{3}\|\nabla
	\Phi_{u h}^{n+1}\|_0^2+C\Delta t^3(\|\mu_t\|_{L^\infty(0,T;H^1(\Omega))}^2\|\nabla\phi(t^{n+1})\|_0^2+\|\mu(t^n)\|_1^2\|\phi_t\|_{L^\infty(0,T;H^1(\Omega))}^2)\\
    &&+\frac{\Delta t}{4}\| e_{\mu h}^n\|_1^2
	+C\Delta t(\|\mu_h^n\|_1^2+\|\phi(t^n)\|_2^2)(\|\nabla \Phi_{\phi h}^n\|_0^2+\|\nabla \Theta_{\phi h}^n\|_0^2+\|\Phi_{uh}^{n+1}\|_0^2)\\
    &&+C\Delta t\|\mu_h^n\|_1^2\|\nabla\phi_h^n\|_0^2|e_{rh}^n|^2+C\Delta t\|\mu_h^n\|_1^2\|\nabla\phi_h^n\|_0^2(\| \Phi_{\phi h}^n\|_0^2+\|\Theta_{\phi h}^n\|_0^2)
\end{eqnarray*}

$H_3$ can be derived similarly,
\begin{eqnarray*}
	|H_3|&=&\left| 2\Delta t\left(\frac{r(t^{n+1})}{\sqrt{E_1(t^{n+1})}}\textbf{u}(t^{n+1})\cdot\nabla \textbf{u}(t^{n+1})-\frac{r_h^n}
	{\sqrt{E_{1, h}^n}}\textbf{u}_h^n\cdot\nabla \textbf{u}_h^n,\Phi_{u h}^{n+1}\right)\right| \\
	&=&2\Delta t\Bigg{|}(\textbf{u}(t^{n+1})\cdot\nabla\textbf{u}(t^{n+1}),\Phi_{u h}^{n+1})-(\textbf{u}(t^n)\cdot\nabla\textbf{u}(t^n),\Phi_{u h}^{n+1})\\
	&&\left.+(\textbf{u}(t^n)\cdot\nabla\textbf{u}(t^n),\Phi_{u h}^{n+1})
	-(\textbf{u}_h^n\cdot\nabla\textbf{u}_h^n,\Phi_{u h}^{n+1})
	+\frac{e_{rh}^n}{\sqrt{E_1(t^n)}}(
	\textbf{u}_h^n\cdot\nabla\textbf{u}_h^n,\Phi_{u h}^{n+1})\right.\\
	&&\left.+r_h^n\left( \frac{1}{\sqrt{E_1(t^n)}}-\frac{1}{\sqrt{E_{1,h}^n}}\right) (
	\textbf{u}_h^n\cdot\nabla\textbf{u}_h^n,\Phi_{u h}^{n+1})\right|.\\
&\leq&\frac{\Delta t}{3}\|\nabla
	\Phi_{u h}^{n+1}\|_0^2+C\Delta t^3\|\textbf{u}_t\|_{L^\infty(0,T;L^2(\Omega))}^2(\|A\textbf{u}(t^{n+1})\|_0^2+\|A\textbf{u}(t^n)\|_0^2)\\
    &&+C\Delta t(\|\nabla\textbf{u}(t^n)\|_0^2+\|\nabla\textbf{u}_h^n\|_0^2)(\|\Phi_{uh}^n\|_0^2+\|\Theta_{uh}^n\|_0^2)\\
    &&+C\Delta t\|\textbf{u}_h^n\|_0^2\|\nabla\textbf{u}_h^n\|_0^2|e_{rh}^n|^2
    +C\Delta t\|\textbf{u}_h^n\|_0^2\|\nabla\textbf{u}_h^n\|_0^2(\| \Phi_{\phi h}^n\|_0^2+\|\Theta_{\phi h}^n\|_0^2).
\end{eqnarray*}

Substituting the above inequalities into \eqref{2error}, summing from $ n = 0 $ to $ m $, we get	
		\begin{eqnarray*}
	&&\|\Phi_{uh}^{m+1}\|_0^2+\Delta t\sum_{n=0}^m\|\nabla \Phi_{u h}^{n+1}\|_0^2\\
	&\leq&\|\Phi_{uh}^0\|_0^2+C \Delta t\sum_{n=0}^m\|R_u\|_{-1}^2+\frac{\Delta t}{4}\sum_{n=0}^m\| e_{\mu h}^{n+1}\|_1^2\\
	&&+C \Delta t^3\sum_{n=0}^m(\|A\textbf{u}(t^{n+1})\|_0^2+\|\nabla\phi(t^{n+1})\|_0^2+\|\mu(t^n)\|_1^2)(\|\textbf{u}_t\|^2_{L^\infty(0,T;L^2
		(\Omega))}\\
	&&\ \ \ \ \ \ \ \ \ \ \ \ \ \ \ \  +\|\phi_t\|^2_{L^\infty(0,T;H^1(\Omega))}+\|\mu_t\|^2_{L^\infty(0,T;H^1(\Omega))})\\
	&&+C \Delta t\sum_{n=0}^m(\|\phi(t^n)\|_2^2+\|\mu_h^n\|_1^2\|\nabla\phi_h^n\|_0^2+\|\nabla\textbf{u}(t^n)\|_0^2+\|\textbf{u}_h^n\|_0^2\|\nabla\textbf{u}_h^n\|_0^2)(\|\Theta_{\phi h}^{n+1}\|_1^2\\
	&&\ \ \ \ \ \ \ \ \ \ \ \ \ \ \ \  
	+\|\Theta_{u h}^{n+1}\|_1^2+\|\Phi_{\phi h}^{n+1}\|_1^2+\|\Phi_{uh}^{n+1}\|_0^2+|e_{rh}^{n+1}|^2).
\end{eqnarray*}
Using  \eqref{orth1}, \eqref{orth3}, Lemma \ref{truncation}, Lemma \ref{thm4.1},  Lemma \ref{r}, \Cref{aassume}, and Gronwall's lemma, we finish the proof.
\end{proof}

\begin{prop}\label{prop}
	Under the conditions of Lemma \ref{thm4.1}-Lemma \ref{lemu}, it holds
	\begin{eqnarray*}
		&&\|\phi(t^{m+1})-\phi_h^{m+1}\|_1^2+|r(t^{m+1})-r_h^{m+1}|^2+\Delta t\sum_{n=0}^m(\|\nabla(\textbf{u}(t^{n+1})-\textbf{u}_h^{n+1})\|_0^2+\|\mu(t^{n+1})-\mu_h^{n+1}\|_1^2)\\
		&\leq& C(\Delta t^2+h^2).
	\end{eqnarray*}
\end{prop}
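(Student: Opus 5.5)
The plan is to add the three estimates of Lemmas \ref{thm4.1}, \ref{r} and \ref{lemu}, absorb the coupling terms, and close with the discrete Gronwall lemma. Then the discrete errors $\Phi$ are converted to the full errors $e$ with the projection bounds \eqref{orth2} and \eqref{orth4}.

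First, introduce
\[
Z^{m+1}=\|\Phi_{\phi h}^{m+1}\|_1^2+\|\Phi_{uh}^{m+1}\|_0^2+|e_{rh}^{m+1}|^2 .
\]
Summing the three lemmas gives $Z^{m+1}$ plus the dissipation $\Delta t\sum_{n}(\|e_{\mu h}^{n+1}\|_1^2+\|\nabla\Phi_{uh}^{n+1}\|_0^2)$ on the left. On the right appear the coupling terms $\frac{\Delta t}{4}\sum\|e_{\mu h}^{n+1}\|_1^2$ (twice) and $\frac{\Delta t}{2}\sum\|\nabla\Phi_{uh}^{n+1}\|_0^2$, coming from Lemmas \ref{r} and \ref{lemu}. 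Their coefficients total at most $1/2$ relative to the left-hand dissipation. I will fix the constant $C$ in front of them by scaling Lemma \ref{thm4.1} and Lemma \ref{lemu} by a suitable large factor before adding. This absorption is the main obstacle, because the stated constants are not tracked explicitly. If the direct addition fails, I will go back into the proofs: the $e_{\mu h}$ and $\nabla\Phi_{uh}$ contributions there arise only from Young's inequality, so their weights can be made arbitrarily small at the cost of larger Gronwall coefficients.

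The remaining right-hand side has the form $C(\Delta t^2+h^2)+C\Delta t\sum_{n=0}^m d_n Z^{n+1}$. The weight $d_n$ collects quantities such as $\|\phi(t^n)\|_2^2$, $\|\mu_h^n\|_1^2\|\nabla\phi_h^n\|_0^2$, $\|\nabla\mathbf{u}_h^n\|_0^2(\|\phi_h^n\|_1^2+\|\mathbf{u}_h^n\|_0^2)$, $\|\mu_h^{n+1}\|_1^2$ and $\|\nabla\mathbf{u}_h^{n+1}\|_0^2$. By Assumption \ref{aassume} and the stability Lemmas \ref{lem5.1}--\ref{lem5.3}, the quantities $\|\phi_h^n\|_1$, $\|\mathbf{u}_h^n\|_0$ and $\|\nabla\mathbf{u}_h^n\|_0$ are uniformly bounded, and $\Delta t\sum_n(\|\mu_h^{n+1}\|_1^2+\|\nabla\mathbf{u}_h^{n+1}\|_0^2)\le M_1+M_2$. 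Hence $\Delta t\sum_n d_n\le C$ independently of $h$ and $\Delta t$.

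Since the sum includes the index $n=m$, the Gronwall lemma requires $\Delta t d_n<1$. For the individual terms $\Delta t\|\mu_h^{n+1}\|_1^2$ and $\Delta t\|\nabla\mathbf{u}_h^{n+1}\|_0^2$, this follows from the energy bound $\Delta t(\|\nabla\mu_h^{n+1}\|_0^2+\|\nabla\mathbf{u}_h^{n+1}\|_0^2)\le M_1$ combined with the $\mu$ bound of Lemma \ref{lem5.1}, under a mild smallness condition on $\Delta t$. Alternatively, these weights can be shifted to index $n$ so that the remark with $\sigma_n=1$ applies. Gronwall then yields
\[
Z^{m+1}+\Delta t\sum_{n=0}^m\big(\|e_{\mu h}^{n+1}\|_1^2+\|\nabla\Phi_{uh}^{n+1}\|_0^2\big)\le C(\Delta t^2+h^2).
\]

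Finally, use the triangle inequality $e=\Phi+\Theta$. The bounds $\|\Theta_{\phi h}^{m+1}\|_1\le Ch\|\phi\|_2$ from \eqref{orth4} and $\|\nabla\Theta_{uh}^{n+1}\|_0\le Ch\|A\mathbf{u}\|_0$ from \eqref{orth2}, together with the regularity in Assumption \ref{aassume}, give the stated bound for $\|e_{\phi h}^{m+1}\|_1^2+|e_{rh}^{m+1}|^2+\Delta t\sum_n(\|\nabla e_{uh}^{n+1}\|_0^2+\|e_{\mu h}^{n+1}\|_1^2)$.
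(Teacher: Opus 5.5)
Your route is the argument the paper leaves implicit, since it states the proposition without a separate proof: scale and add Lemmas \ref{thm4.1}, \ref{r}, \ref{lemu}, close with the discrete Gronwall lemma, then pass from $\Phi$ to $e$ via \eqref{orth2} and \eqref{orth4}. Your absorption-by-scaling step is sound and needs no reopening of the proofs, because the coupling is triangular:
\begin{itemize}
\item Lemma \ref{thm4.1} has neither coupling term on its right-hand side.
\item Lemma \ref{lemu} has only $\frac{\Delta t}{4}\sum\|e_{\mu h}^{n+1}\|_1^2$.
\item Lemma \ref{r} has both coupling terms.
\end{itemize}
So multiply Lemma \ref{lemu} by $\beta>C_r/2$, and then Lemma \ref{thm4.1} by $\alpha>(C_r+\beta C_u)/4$, where $C_r$ and $C_u$ are the constants of Lemmas \ref{r} and \ref{lemu}. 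Your preliminary remark that the coefficients ``total at most $1/2$'' is not correct as stated, because those terms carry the lemmas' constants $C$. It is the scaling that does the work.

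The step that fails as written is your verification of $\Delta t\,d_n<1$.
\begin{itemize}
\item The right-hand sums run up to $n=m$, and the weights contain $\|\mu_h^{n+1}\|_1^2$ (Lemma \ref{r}) and $\|\mu_h^{n}\|_1^2$ (Lemmas \ref{r} and \ref{lemu}). So you need each $\Delta t\|\mu_h^{n+1}\|_1^2$ to be small.
\item Lemma \ref{lem5.1} and the energy law give only $\Delta t\|\mu_h^{n+1}\|_1^2\le M_2$ and $\Delta t\|\nabla\mu_h^{n+1}\|_0^2\le M_1$. These bounds do not improve as $\Delta t\to 0$, so no smallness condition on $\Delta t$ can push $CM_2$ below $1$.
\item Your fallback of shifting these weights to level $n$ is not available from the lemma statements. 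In Lemma \ref{r} the factor $\|\mu_h^{n+1}\|_1^2$ multiplies errors at level $n+1$, so you would have to redo the lemma proofs.
\end{itemize}

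The repair uses Lemma \ref{lem5.2}. Testing (\ref{dsequ11}b) with $\tau_h=\mu_h^{n+1}$ gives
\begin{equation*}
\|\mu_h^{n+1}\|_0\le\|\triangle_h\phi_h^{n+1}\|_0+\|g(\phi_h^n)\|_0\le C,
\end{equation*}
using Lemma \ref{lem5.1} and $H^1\subset L^6$. Then
\begin{equation*}
\Delta t\|\nabla\mu_h^{n+1}\|_0^2=\Delta t(-\triangle_h\mu_h^{n+1},\mu_h^{n+1})\le C\Delta t^{1/2}\big(\Delta t\|\triangle_h\mu_h^{n+1}\|_0^2\big)^{1/2}\le CM_3^{1/2}\Delta t^{1/2}.
\end{equation*}
Lemma \ref{lem5.3} separately gives $\Delta t\|\nabla\mathbf{u}_h^{n+1}\|_0^2\le M_4\Delta t$. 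Combining these, $\Delta t\,d_n\le C\Delta t^{1/2}\le\frac12$ once $\Delta t$ is below a fixed threshold. Hence $\sigma_n\le 2$, and Gronwall closes with a constant independent of $h$ and $\Delta t$. Alternatively, impose $\Delta t\,d_n<1$ as an explicit hypothesis, as the paper does in Theorem \ref{thm5.5}.

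With this fix the rest of your argument, including the final triangle-inequality step, goes through. One caveat is inherited from the paper: the constant weight $\|\phi_t\|^2_{L^\infty(0,T;H^1(\Omega))}$ in Lemma \ref{r} is not covered by Assumption \ref{aassume}. So your claim that the assumption alone bounds $\Delta t\sum_n d_n$ tacitly needs $\phi_t\in L^\infty(0,T;H^1(\Omega))$.
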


%Then we prove the optimal $L^2$-norm error estimates of the numerical solutions for fully discrete SAV scheme \eqref{dsequ11}.
We next prove the optimal $L^2$-norm error estimates for the numerical solutions of the fully discrete SAV-FEM  scheme \eqref{dsequ11}.
\begin{thm}\label{thm5.5}
	Under \Cref{aassume},
	for all $m\geq0$, $d_n\Delta t<1 $ and $e_{\phi h}^0=e_{u h}^0=e_{rh}^0=0$, it holds
	\begin{eqnarray*}
		\| e_{\phi h}^{m+1}\|_0^2+\|e_{u h}^{m+1}\|_{-1}^2+(e_{r h}^{m+1})^2
		+
		\sum_{n=0}^m\| e_{\mu h}^{n+1}\|_0^2
		\leq
		C e^{\left( \Delta t\sum\limits_{n=0}^{m}(1-d_n\Delta t)^{-1}d_n\right) }(\Delta t^2+h^4),
	\end{eqnarray*}
	where $d_n>0$ is a bounded constant.
\end{thm}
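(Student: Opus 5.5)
Our approach is a duality (negative-norm) argument on the error equations \eqref{eequ}, built on the Ritz/$L^2$ splitting and the $H^1$-type bounds of Proposition \ref{prop}. Write $e_{\phi h}=\Phi_{\phi h}+\Theta_{\phi h}$, $e_{uh}=\Phi_{uh}+\Theta_{uh}$. By \eqref{orth2} and \eqref{orth4}, $\|\Theta_{\phi h}\|_0\le Ch^2$ and $\|\Theta_{uh}\|_{-1}\le \|\Theta_{uh}\|_0\le Ch^2$. It therefore suffices to bound $\|\Phi_{\phi h}^{m+1}\|_0$, $\|\Phi_{uh}^{m+1}\|_{-1}$, $e_{rh}^{m+1}$ and $\Delta t\sum\|e_{\mu h}^{n+1}\|_0^2$ by $\Delta t^2+h^4$ up to the Gronwall factor.

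For the phase field we take $\psi_h=2\Delta t\,\Phi_{\mu h}^{n+1}$ in (\ref{eequ}a) and $\tau_h=-2\,(\Phi_{\phi h}^{n+1}-\Phi_{\phi h}^n)$ in (\ref{eequ}b)? That mimics the $H^1$ estimate; for an $L^2$ estimate we instead test (\ref{eequ}a) with $\psi_h=2\Delta t\,\Phi_{\phi h}^{n+1}$ and (\ref{eequ}b) with $\tau_h=2\Delta t\,\Phi_{\mu h}^{n+1}$. We then use that the Ritz projection kills the gradient pairings, $(\nabla\Theta_{\mu h},\nabla\psi_h)=(\nabla\Theta_{\phi h},\nabla\tau_h)=0$. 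This gives
\[
\|\Phi_{\phi h}^{n+1}\|_0^2-\|\Phi_{\phi h}^n\|_0^2+2\Delta t\|\Phi_{\mu h}^{n+1}\|_0^2
\]
on the left, up to the cross term with $\Delta t(\nabla\Phi_{\mu h},\nabla\Phi_{\phi h})$ that cancels between the two equations. The term $(d_t\Theta_{\phi h},\psi_h)$ is bounded by $Ch^2\|\phi_t\|_2$ after Young. The truncation term is handled via Lemma \ref{truncation} with $\|R_\phi\|_{-1}\|\nabla\Phi_{\phi h}\|_0$, or better $\|R_\phi\|_0$, giving $O(\Delta t^2)$. The potential difference $g(\phi(t^{n+1}))-g(\phi_h^n)$ is split into a time increment, $O(\Delta t)$, plus $g(\phi(t^n))-g(\phi_h^n)$. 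The latter is Lipschitz with constant controlled by $\|\phi_h^n\|_{L^\infty}$, which is uniformly bounded by \eqref{s4} together with Lemma \ref{lem5.2}. It is thus bounded by $C(\|\Phi_{\phi h}^n\|_0+h^2)\|\Phi_{\mu h}^{n+1}\|_0$.

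For the velocity we test (\ref{eequ}c) with $2\Delta t A_h^{-1}\Phi_{uh}^{n+1}$. Since $\Phi_{uh}^{n+1}$ is discretely divergence free, the pressure drops out, and the left side yields $\|\Phi_{uh}^{n+1}\|_{-1}^2-\|\Phi_{uh}^n\|_{-1}^2+2\Delta t\|\Phi_{uh}^{n+1}\|_0^2$. The nonlinear differences are then estimated in dual form. For instance, $(\mu\nabla\phi-\mu_h^n\nabla\phi_h^n,A_h^{-1}\Phi)$ is rewritten as $(e_\mu\nabla\phi+\mu_h\nabla e_\phi,\cdot)$. After integrating by parts the $\nabla e_\phi$ term, it is bounded by $C(\|e_{\mu h}^n\|_0+\|e_{\phi h}^n\|_0)\|\Phi\|_{0}$, using $\|A_h^{-1}\Phi\|_{W^{1,\infty}}$-type bounds and the uniform $H^1$/$\Delta_h$ bounds of Lemmas \ref{lem5.1}--\ref{lem5.3}. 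The convective difference is treated the same way using skew-symmetry.

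For the auxiliary variable, equation (\ref{eequ}e) is multiplied by $2\Delta t\,e_{rh}^{n+1}$. Every inner product difference is now bounded by $L^2$ errors rather than $H^1$ errors, again via Lemma \ref{lem2.1}, Lemma \ref{final} and the uniform bounds, plus $T_1$ and $R_r$ from Lemma \ref{truncation}.

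Adding the three inequalities, summing over $n$, and absorbing $\|\Phi_{\mu h}\|_0^2$ and $\|\Phi_{uh}\|_0^2$ on the left produces an inequality of the form \eqref{2.1}. Here $a_n$ is the sum of squared errors, $d_n$ collects the products of the bounded stability quantities (which are bounded constants by Lemmas \ref{lem5.1}--\ref{lem5.3}), and $c_n=O(\Delta t^2+h^4)$. The discrete Gronwall lemma with $\sigma_n=(1-d_n\Delta t)^{-1}$ then gives the stated exponential factor, and the triangle inequality restores $e$ from $\Phi$.

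The main obstacle is keeping every coupling term at $h^2$ rather than $h$. The terms $\mu_h^n\nabla e_{\phi h}^n$ and $\mathbf u\cdot\nabla e_{\phi}$ in (\ref{eequ}a) involve gradients of errors, which Proposition \ref{prop} only controls at $O(h)$. We would first try to integrate these by parts onto the test function, using incompressibility and $\mathbf u_h^n\in\mathbf X_h$ together with the discrete $W^{1,4}$ bounds from Lemma \ref{lem5.2}. If that fails, we would use the negative-norm estimate $\|\Theta_{\phi h}\|_{-1}\le Ch^2$, which holds for $P_1$ Ritz projection in convex domains, for the projection part.
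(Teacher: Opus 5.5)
Your overall architecture matches the paper's: $A_h^{-1}\Phi_{uh}^{n+1}$ for the velocity, $e_{rh}^{n+1}$ as the multiplier in (\ref{eequ}e), and Gronwall with $\sigma_n=(1-d_n\Delta t)^{-1}$. You depart in the Cahn--Hilliard pair. The paper tests (\ref{eequ}a,b) with $2\Delta t\,\triangle_h^{-1}\Phi_{\mu h}^{n+1}$ and $2\Delta t\,d_t\triangle_h^{-1}\Phi_{\phi h}^{n+1}$; you use $2\Delta t\,\Phi_{\phi h}^{n+1}$ and $2\Delta t\,\Phi_{\mu h}^{n+1}$. Your pair is legitimate: the terms $\pm(\nabla\Phi_{\mu h}^{n+1},\nabla\Phi_{\phi h}^{n+1})$ cancel, you get the same $\phi$/$\mu$ part of the left side of \eqref{w}, and the potential term becomes trivial.

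The cost is that the transport error is now paired with $\Phi_{\phi h}^{n+1}$, and your energy contains no gradient of $\Phi_{\phi h}$. Terms such as $(e_{uh}^n\cdot\nabla\phi(t^n),\Phi_{\phi h}^{n+1})$ and $(\mathbf{u}_h^n\cdot\nabla\Phi_{\phi h}^n,\Phi_{\phi h}^{n+1})$ cannot be bounded by $\|\Phi_{\phi h}\|_0$ alone; with only $\phi\in H^2$, $\nabla\phi$ need not be bounded. Your first remedy, integrating by parts onto the test function, just produces $\nabla\Phi_{\phi h}^{n+1}$. Your second, a negative-norm bound for $\Theta_{\phi h}$, does not touch the $\Phi_{\phi h}$ part. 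Falling back on the $O(h)$ bound of Proposition~\ref{prop} gives $h^2$ instead of $h^4$ on the right. The paper avoids this because $\nabla\triangle_h^{-1}\Phi_{\mu h}^{n+1}$ is controlled by $\|\Phi_{\mu h}^{n+1}\|_0$, so the transport difference is only needed in a negative norm (term $G_6$). To keep your pair you need one more ingredient: take $\tau_h=\Phi_{\phi h}^{n+1}$ in (\ref{eequ}b) and use the Ritz property,
\[
\|\nabla\Phi_{\phi h}^{n+1}\|_0^2=(e_{\mu h}^{n+1},\Phi_{\phi h}^{n+1})-\big(g(\phi(t^{n+1}))-g(\phi_h^n),\Phi_{\phi h}^{n+1}\big),
\]
so that $\Delta t\|\nabla\Phi_{\phi h}^{n+1}\|_0^2\le\delta\Delta t\|e_{\mu h}^{n+1}\|_0^2+C\Delta t(\|\Phi_{\phi h}^{n+1}\|_0^2+\|e_{\phi h}^n\|_0^2+\Delta t^2)$, which is absorbable. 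Separately, your bound $Ch^2\|\phi_t\|_2$ for the $d_t\Theta_{\phi h}$ term uses $\phi_t\in L^2(0,T;H^2(\Omega))$, which \Cref{aassume} does not provide.

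The second gap is in the auxiliary-variable equation. It is not true that every difference there is bounded by $L^2$ errors. Equation (\ref{eequ}e) contains $\Delta t\,e_{rh}^{n+1}\big(g(\phi(t^n))/r(t^n),\,d_te_{\phi h}^{n+1}\big)$ (the paper's $B_2$), and nothing on your left side controls $d_te_{\phi h}^{n+1}$. A direct bound either loses a factor $\Delta t^{-1}$ or, via Lemma~\ref{final}, is merely $O(\Delta t|e_{rh}^{n+1}|)$, which does not close. The paper removes $d_te_{\phi h}^{n+1}$ through the error equation (\ref{eequ}a). It bounds $\|d_te_{\phi h}^{n+1}\|_{-2}$ as in \eqref{dtphi}, using Lemma~\ref{lem5.11} to get $\|\triangle_h e_{\mu h}^{n+1}\|_{-2}\le C\|e_{\mu h}^{n+1}\|_0$. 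This is where the dissipation $\Delta t\|e_{\mu h}^{n+1}\|_0^2$ is spent, and it requires $g(\phi(t^n))\in H^2$ with $\partial_n g(\phi)=0$. Lemmas~\ref{lem2.1} and \ref{final} do not supply this step.

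You can do the same in your framework: test (\ref{eequ}a) with $R_h g(\phi(t^n))$, integrate $(\nabla\Phi_{\mu h}^{n+1},\nabla g(\phi(t^n)))$ by parts, and bound the remainder $(d_te_{\phi h}^{n+1},(I-R_h)g(\phi(t^n)))$ by $Ch^2$ via Lemma~\ref{final}. With both additions your route is viable. It trades the paper's estimate of the potential term $G_2$, which also relies on \eqref{dtphi}, for the elliptic bound on $\nabla\Phi_{\phi h}$.
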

	\begin{proof}
		Choosing $(\psi_h,\tau_h,\textbf{v}_h)=2\Delta t(\triangle^{-1}_h\Phi_{\mu h}^{n+1},d_t\triangle^{-1}_h\Phi_{\phi h}^{n+1},A_h^{-1}\Phi_{u h}^{n+1})$ in (\ref{eequ}a)-(\ref{eequ}c), and multiplying (\ref{eequ}e) by $2e_{rh}^{n+1}$,  and using \eqref{Ah-1}-\eqref{Ah-11}, we get
		\begin{eqnarray}\label{w}
			&&\| \Phi_{\phi h}^{n+1}\|_0^2-\|\Phi_{\phi h}^n\|_0^2+\|\Phi_{\phi h}^{n+1}-\Phi_{\phi h}^n\|_0^2
			+\|\Phi_{uh}^{n+1}\|_{-1}^2-\|\Phi_{uh}^n\|_{-1}^2+\|\Phi_{uh}^{n+1}-\Phi_{uh}^n\|_{-1}^2\nonumber\\
			&&+|e_{rh}^{n+1}|^2-|e_{rh}^n|^2+|e_{rh}^{n+1}-e_{rh}^n|^2+\Delta t(\|\Phi_{\mu h}^{n+1}\|_0^2+\|e_{\mu h}^{n+1}\|_0^2+\|\Phi_{uh}^{n+1}\|_0^2+\|e_{uh}^{n+1}\|_0^2)\nonumber\\
			&=&\Delta t(\|\Theta_{\mu h}^{n+1}\|_0^2+\|\Theta_{u h}^{n+1}\|_0^2)
			+2\Delta t((R_u,A_h^{-1}\Phi_{u h}^{n+1})+(R_\phi,\triangle_h^{-1}\Phi_{\mu h}^{n+1})-(\Theta_{\phi h}^{n+1},d_t\Phi_{\phi h}^{n+1})\nonumber\\
    &&+e_{rh}^{n+1}(R_r-T_1))+2\Delta t(g(\phi(t^{n+1}))-
			g(\phi_h^n),d_t\triangle^{-1}_h\Phi_{\phi h}^{n+1})\nonumber\\
            &&+\Delta te_{rh}^{n+1}\left( \left(\frac{g(\phi(t^n))}{r(t^n)}-\frac{g(\phi_h^n)}{r_h^{n+1}},d_t\phi_h^{n+1}\right) +\left(  \frac{g(\phi(t^n))}{r(t^n)},d_te_{\phi h}^{n+1}\right) \right)\nonumber\\
			&&
			+2\Delta t\left( \left( \frac{r(t^{n+1})}
			{\sqrt{E_1(t^{n+1})}}\mu(t^{n+1})\nabla\phi(t^{n+1})-\frac{r_h^n}
			{\sqrt{E_{1, h}^n}}\mu_h^n\nabla\phi_h^n,A_h^{-1}\Phi_{u h}^{n+1}\right) \right.\nonumber\\
			&&-\left(\frac{r(t^{n+1})}{\sqrt{E_1(t^{n+1})}}\textbf{u}(t^{n+1})\cdot\nabla \textbf{u}(t^{n+1})-\frac{r_h^n}
			{\sqrt{E_{1, h}^n}}\textbf{u}_h^n\cdot\nabla \textbf{u}_h^n,A_h^{-1}\Phi_{u h}^{n+1}\right) \nonumber\\
			&&\left. +
			\left(\frac{r(t^{n+1})}
			{\sqrt{E_1(t^{n+1})}}\textbf{u}(t^{n+1})\cdot\nabla\phi(t^{n+1})-\frac{r_h^n}
			{\sqrt{E_{1, h}^n}}\textbf{u}_h^n\cdot\nabla\phi_h^n,
			\triangle^{-1}_h\Phi_{\mu h}^{n+1}
			\right) \right) 
			 \nonumber\\
			&&+\Delta te_{rh}^{n+1}\left( \frac{1}{\sqrt{E_1(t^n)}}(\textbf{u}(t^n)\cdot\nabla \textbf{u}(t^n),\textbf{u}(t^n))- \frac{r_h^n}{r_h^{n+1}\sqrt{E_{1,h}^n}}(\textbf{u}_h^n\cdot\nabla \textbf{u}_h^n,\textbf{u}_h^{n+1})\right.\nonumber\\
			&&\left.+\frac{1}{\sqrt{E_1(t^n)}}(\textbf{u}(t^n)\cdot\nabla \phi(t^n),\mu(t^n))-\frac{r_h^n}{r_h^{n+1}\sqrt{E_{1,h}^n}}(\textbf{u}_h^n\cdot\nabla \phi_h^n,\mu_h^{n+1})\right.\nonumber\\
			&&\left.-\left(\frac{1}{\sqrt{E_1(t^n)}}(\mu(t^n)\cdot\nabla \phi(t^n),\textbf{u}(t^n))-\frac{r_h^n}{r_h^{n+1}\sqrt{E_{1,h}^n}}(\mu_h^n\cdot\nabla \phi_h^n,\textbf{u}_h^{n+1})\right)\right) \nonumber\\
			&=&\Delta t(\|\Theta_{\mu h}^{n+1}\|_0^2+\|\Theta_{u h}^{n+1}\|_0^2)
			+\sum_{i=1}^{9}G_i.
		\end{eqnarray}
We next estimate each term on the right-hand side of \eqref{w} separately.
% Let's estimate each term on the right side of equation \eqref{w} one by one.
\begin{eqnarray*}
		|G_1|&=&|2\Delta t((R_u,A_h^{-1}\Phi_{u h}^{n+1})+(R_\phi,\triangle_h^{-1}\Phi_{\mu h}^{n+1})-(\Theta_{\phi h}^{n+1},d_t\Phi_{\phi h}^{n+1})+e_{rh}^{n+1}(R_r-T_1))|\\
        &\leq&\frac{\Delta t}{6}(\|\Phi_{uh}^{n+1}\|_0^2+\|\Phi_{\mu h}^{n+1}\|_0^2)+C\Delta t(\|R_u\|_{-2}^2+\|R_\phi\|_{-2}^2)+\Delta t^2\|d_t\Phi_{\phi h}^{n+1}\|_0^2\\
        &&+\|d_t\Theta_{\phi h}^{n+1}\|_0^2+\Delta t(|e_{rh}^{n+1}|^2+\|T_1\|_0^2+|R_r|^2),\\
			|G_2|&=&|2\Delta t(g(\phi(t^{n+1}))-
			g(\phi_h^n),d_t\triangle_h^{-1}\Phi_{\phi h}^{n+1})|\\
			&=&2\Delta t|(g(\phi(t^{n+1}))-g(\phi(t^n))+g(\phi(t^n))-g(\phi_h^n),d_t\triangle_h^{-1}\Phi_{\phi h}^{n+1})|\\
			&\leq&2\Delta t\|g(\phi(t^{n+1}))-g(\phi(t^n))+g(\phi(t^n))-g(\phi_h^n)\|_0\|d_t\Phi_{\phi h}^{n+1}\|_{-2}.
		\end{eqnarray*}
		%Since by  (\ref{eequ}a), Lemma \ref{truncation} and the fact that $ \|\cdot\|_{-2}\leq\|\cdot\|_{-1} $, we have
        By (\ref{eequ}a), Lemma~\ref{truncation}, and the fact that $ \|\cdot\|_{-2}\leq\|\cdot\|_{-1} $, we have
		\begin{eqnarray}\label{dtphi}
			\|d_t\Phi_{\phi h}^{n+1}\|_{-2}
			&\leq&\|\triangle_h e_{\mu h}^{n+1}\|_{-2}+\|d_t\Theta_{\phi h}^{n+1}\|_{-1}+\|R_\phi^n\|_{-1}\nonumber\\
			&&+\left\|\frac{r(t^{n+1})}
			{\sqrt{E_1(t^{n+1})}}\textbf{u}(t^{n+1})\cdot\nabla\phi(t^{n+1})-\frac{r_h^n}
			{\sqrt{E_{1, h}^n}}\textbf{u}_h^n\cdot\nabla\phi_h^n \right\|_{-1}
			\nonumber\\
			&\leq&\|e_{\mu h}^{n+1}\|_0+C\frac{h^2}{\Delta t^{1/2}}\int_{t^n}^{t^{n+1}}\|\phi_t\|_1dt+\|R_\phi^n\|_{-1}\nonumber\\
			&&+C\Delta t(\|\textbf{u}_t\|_{L^\infty(0,T;L^2(\Omega))}\|\phi(t^{n+1})\|_2+\|A\textbf{u}(t^n)\|_0\|\phi_t\|_{L^\infty(0,T;L^2(\Omega))})\nonumber\\
			&&+C(\|\phi(t^n)\|_2+\|A_h\textbf{u}_h^n\|_0+\|\nabla u_h^n\|_0\|\phi_h^n\|_1)(\| \Phi_{u h}^n\|_0+\|\Theta_{u h}^n\|_0\nonumber\\
			&&+\| \Phi_{\phi h}^n\|_0+\|\Theta_{\phi h}^n\|_0+|e_{rh}^n|).
		\end{eqnarray}
		Then, it follows
		\begin{eqnarray*}
			|G_2|
			&\leq&\frac{\Delta t}{6}\|e_{\mu h}^{n+1}\|_0^2+C\Delta t\|R_\phi^n\|_{-1}^2+C \Delta t^3(\|\textbf{u}_t\|_{L^\infty(0,T;L^2(\Omega))}^2\|\phi(t^{n+1})\|_2^2\\
			&&+\|A\textbf{u}(t^n)\|_0^2\|\phi_t\|_{L^\infty(0,T;L^2(\Omega))}^2+\|\phi_t\|_{L^\infty(0,T;L^2(\Omega))}^2)+C\Delta t(\|\phi(t^n)\|_2^2+\|A_h\textbf{u}_h^n\|_0^2\\
			&&+\|\nabla u_h^n\|_0^2\|\phi_h^n\|_1^2)(\| \Phi_{u h}^n\|_0^2+\|\Theta_{u h}^n\|_0^2+\| \Phi_{\phi h}^n\|_0^2+\|\Theta_{\phi h}^n\|_0^2+|e_{rh}^n|^2)\\
   &&+Ch^{4}\int_{t^n}^{t^{n+1}}\|\phi_t\|_1^2dt.
		\end{eqnarray*}
		
		Next, we estimate the term $ G_3 $,
		\begin{eqnarray*}
			|G_3|&=&\left| \Delta te_{rh}^{n+1}\left( \left(\frac{g(\phi(t^n))}{r(t^n)}-\frac{g(\phi_h^n)}{r_h^{n+1}},d_t\phi_h^{n+1}\right) +\left(  \frac{g(\phi(t^n))}{r(t^n)},d_te_{\phi h}^{n+1}\right) \right)\right| \\
			&=&\left| \Delta te_{rh}^{n+1} \left(\frac{g(\phi(t^n))}{r(t^n)}-\frac{g(\phi_h^n)}{r_h^{n+1}},d_t\phi_h^{n+1}\right) \right|+\left| \Delta te_{rh}^{n+1} \left(  \frac{g(\phi(t^n))}{r(t^n)},d_te_{\phi h}^{n+1}\right)\right|\\
			&=&|B_1|+|B_2|.
		\end{eqnarray*}
For $B_1$, we decompose the coefficient difference into two parts
\begin{equation*}\label{eq:decomp}
\frac{g(\phi(t^n))}{r(t^n)} - \frac{g(\phi^n_h)}{r^{n+1}_h}
= \underbrace{\frac{g(\phi(t^n))}{r(t^n)} - \frac{g(\phi(t^n))}{r^{n+1}_h}}_{I_1}
+ \underbrace{\frac{g(\phi(t^n))}{r^{n+1}_h} - \frac{g(\phi^n_h)}{r^{n+1}_h}}_{I_2}.
\end{equation*}
According to Lemma \ref{lem2.1},
\begin{eqnarray*}
\|I_1 \|_0&=& g(\phi(t^n))\left\|\frac{1}{r(t^n)} - \frac{1}{r^{n+1}_h}\right\|_0\\ 
&=& g(\phi(t^n))\left\|\frac{1}{r(t^n)}-\frac{1}{r(t^{n+1})}+\frac{1}{r(t^{n+1})}-\frac{1}{r_h^{n+1}}\right\|_0\\ 
&=&g(\phi(t^n))\left\|\frac{E_1(t^{n+1})-E_1(t^n)}{r(t^{n+1})r(t^n)(r(t^{n+1})+r(t^n))}+\frac{E_{1,h}^{n+1}-E_1(t^{n+1})}{r_h^{n+1}r(t^{n+1})(r_h^{n+1}+r(t^{n+1}))}\right\|_0\nonumber\\
			&\leq&C(\Delta t\|\phi_t\|_{L^\infty(0,T;L^2(\Omega))}+\|e_{\phi h}^{n+1}\|_0),
\end{eqnarray*}
\begin{equation*}
I_2 = \frac{1}{r^{n+1}_h}\big(g(\phi(t^n)) - g(\phi^n_h)\big).
\end{equation*}
There exists $\zeta^n$ between $\phi(t^n)$ and $\phi^n_h$ such that
\begin{equation*}
g(\phi(t^n)) - g(\phi^n_h) = g'(\zeta^n)\big(\phi(t^n) - \phi^n_h\big),
\end{equation*}
%Since $g'(\phi) = \frac{1}{\varepsilon}\phi(\phi^2-1)$ and $|\phi| \le 1$ a.e.\ by the maximum principle, we have $|g'(\zeta^n)| \le C$. Together with $\frac{1}{|r^{n+1}_h|} \le C$, we obtain
% where we have used the fact that $\zeta^n$ lies between $\phi(t_n)$ and $\phi_h^n$, and both $\phi(t_n)$ and $\phi_h^n$ are uniformly bounded in $L^\infty(\Omega)$ by Assumption \ref{aassume}, the Sobolev embedding and \eqref{s4} together with Lemma 
% \ref{lem5.2}. Therefore, $g'(\zeta^n)$ has a uniform upper bound, together with the bound $\frac{1}{|r_h^{n+1}|}\leq C$, we obtain
where we have used the fact that $\zeta^n$ lies between $\phi(t_n)$ and $\phi_h^n$. By Assumption \ref{aassume}, the Sobolev embedding theorem, \eqref{s4}, and Lemma \ref{lem5.2}, both $\phi(t_n)$ and $\phi_h^n$ are uniformly bounded in $L^\infty(\Omega)$. Consequently, $\zeta^n$ is also uniformly bounded in $L^\infty(\Omega)$, implying that $g'(\zeta^n)$ admits a uniform upper bound. Combining this with the estimate $\frac{1}{|r_h^{n+1}|}\leq C$, we obtain

\begin{equation*}
\|I_2\|_0 \le C\,\|\phi(t^n) - \phi^n_h\|_0 = C\,\|e^n_{\phi h}\|_0.
\end{equation*}

By Lemma \ref{final}, it follows
	\begin{eqnarray*}
	|B_1|
	&\leq&C\Delta t|e_{r h}^{n+1}|^2+C\Delta t(\|e_{\phi h}^{n+1}\|_0^2+\|e_{\phi h}^n\|_0^2)\\
	&&+C\Delta t^3\|\phi_t\|_{L^\infty(0,T;L^2(\Omega))}^2.
\end{eqnarray*}	
By \eqref{dtphi}, it holds 
	\begin{eqnarray*}
	|B_2|&\leq&C\Delta t|e_{rh}^{n+1}|\|d_te_{\phi h}^{n+1}\|_{-2}\\
	&\leq&\frac{\Delta t}{6}\|e_{\mu h}^{n+1}\|_0^2+C\Delta t\|R_\phi^n\|_{-1}^2+C\Delta t|e_{rh}^{n+1}|^2+C \Delta t^3(\|\textbf{u}_t\|_{L^\infty(0,T;L^2(\Omega))}^2\|\phi(t^{n+1})\|_2^2\\
	&&+\|A\textbf{u}(t^n)\|_0^2\|\phi_t\|_{L^\infty(0,T;L^2(\Omega))}^2)+C\Delta t(\|\phi(t^n)\|_2^2+\|A_h\textbf{u}_h^n\|_0^2\\
 &&+\|\nabla u_h^n\|_0^2\|\phi_h^n\|_1^2)(\| \Phi_{u h}^n\|_0^2
 +\|\Theta_{u h}^n\|_0^2+\| \Phi_{\phi h}^n\|_0^2+\|\Theta_{\phi h}^n\|_0^2+|e_{rh}^n|^2).
\end{eqnarray*}
Then,
	\begin{eqnarray*}
	|G_3|
	&\leq&\frac{\Delta t}{6}\|e_{\mu h}^{n+1}\|_0^2+C\Delta t\|R_\phi^n\|_{-1}^2+C \Delta t^3(\|\textbf{u}_t\|_{L^\infty(0,T;L^2(\Omega))}^2\|\phi(t^{n+1})\|_2^2\\
	&&+\|A\textbf{u}(t^n)\|_0^2\|\phi_t\|_{L^\infty(0,T;L^2(\Omega))}^2+\|\phi_t\|_{L^\infty(0,T;L^2(\Omega))}^2)\\
 &&+C\Delta t(\|\phi(t^n)\|_2^2+\|A_h\textbf{u}_h^n\|_0^2+\|\nabla u_h^n\|_0^2\|\phi_h^n\|_1^2)(\| e_{u h}^n\|_0^2+\| e_{\phi h}^n\|_0^2 \\
 &&+\| e_{\phi h}^{n+1}\|_0^2+|e_{rh}^n|^2+|e_{rh}^{n+1}|^2).
\end{eqnarray*}
	
%With \eqref{s1}-\eqref{s4}, the right-hand remaining terms of \eqref{w} are estimated as follows	
Using \eqref{s1}--\eqref{s4}, the remaining terms on the right-hand side of \eqref{w} can be estimated as follows:
\begin{eqnarray*}
|G_4|&=&			
\left| 2\Delta t\left( \frac{r(t^{n+1})}
{\sqrt{E_1(t^{n+1})}}\mu(t^{n+1})\nabla\phi(t^{n+1})-\frac{r_h^n}
{\sqrt{E_{1, h}^n}}\mu_h^n\nabla\phi_h^n,A_h^{-1}\Phi_{u h}^{n+1}\right)\right| \\
&=&2\Delta t \Bigg{|} (\mu(t^{n+1})\cdot\nabla\phi(t^{n+1}),A_h^{-1}\Phi_{u h}^{n+1})-(\mu(t^n)\cdot\nabla\phi(t^n),A_h^{-1}\Phi_{u h}^{n+1})\\
&&\left.+(\mu(t^n)\cdot\nabla\phi(t^n),A_h^{-1}\Phi_{u h}^{n+1})
-(\mu_h^n\cdot\nabla\phi_h^n,A_h^{-1}\Phi_{u h}^{n+1})
+\frac{e_{rh}^n}{\sqrt{E_1(t^n)}}(
\mu_h^n\cdot\nabla\phi_h^n,A_h^{-1}\Phi_{u h}^{n+1})\right.\\
&&\left.
+r_h^n\left( \frac{1}{\sqrt{E_1(t^n)}}-\frac{1}{\sqrt{E_{1,h}^n}}\right) (
\mu_h^n\cdot\nabla\phi_h^n,A_h^{-1}\Phi_{u h}^{n+1})\right|.
\end{eqnarray*}
%Thanks to \eqref{s1}, \eqref{s3} and the Young inequality, we have
By \eqref{s1}, \eqref{s3}, and Young's inequality, we obtain
\begin{eqnarray*}
&&2\Delta t|(\mu(t^{n+1})\cdot\nabla\phi(t^{n+1}),A_h^{-1}\Phi_{u h}^{n+1})-(\mu(t^n)\cdot\nabla\phi(t^n),A_h^{-1}\Phi_{u h}^{n+1})| \\
&=&2\Delta t|((\mu(t^{n+1})-\mu(t^n))\cdot\nabla\phi(t^{n+1})+\mu(t^n)\cdot\nabla(\phi(t^{n+1})-\phi(t^n)),A_h^{-1}\Phi_{u h}^{n+1})|\\
&\leq&\Delta t\|\Phi_{u h}^{n+1}\|_{-1}^2+\Delta t^3(\|\mu_t\|_{L^\infty(0,T;H^1(\Omega))}^2+\|\phi_t\|_{L^\infty(0,T;H^1(\Omega))}^2)(\|\phi(t^{n+1})\|_1^2+\|\mu(t^n)\|_1^2).
\end{eqnarray*}
\begin{eqnarray*}
&&2\Delta t|(\mu(t^n)\cdot\nabla\phi(t^n),A_h^{-1}\Phi_{u h}^{n+1})
-(\mu_h^n\cdot\nabla\phi_h^n,A_h^{-1}\Phi_{u h}^{n+1})| \\
&=&2\Delta t|(e_{\mu h}^n\cdot\nabla\phi(t^n),A_h^{-1}\Phi_{u h}^{n+1})
+(\mu_h^n\cdot\nabla e_{\phi h}^n,A_h^{-1}\Phi_{u h}^{n+1})|
\\
&\leq&\frac{\Delta t}{6}(\|\Phi_{u h}^{n+1}\|_0^2+\| e_{\mu h}^n\|_0^2)
+C\Delta t(\|\phi(t^n)\|_2^2\|\Phi_{uh}^{n+1}\|_{-1}^2+\|\mu_h^n\|_1^2\|\Phi_{\phi h}^n\|_0^2)\\
&&+C\Delta t\|\mu_h^n\|_1^2\| \Theta_{\phi h}^n\|_0^2.
\end{eqnarray*}		
Using Lemma \ref{lem2.1}, it follows		
\begin{eqnarray*}
&&2\Delta t\left| \frac{e_{rh}^n}{\sqrt{E_1(t^n)}}(
\mu_h^n\cdot\nabla\phi_h^n,A_h^{-1}\Phi_{u h}^{n+1})+ r_h^n\left( \frac{1}{\sqrt{E_1(t^n)}}-\frac{1}{\sqrt{E_{1,h}^n}}\right) (
\mu_h^n\cdot\nabla\phi_h^n,\tilde{\Phi}_{u h}^{n+1})\right|\\
&\leq&\frac{\Delta t}{6}\|\Phi_{uh}^{n+1}\|_0^2+C\Delta t\|\mu_h^n\|_0^2\|\nabla\phi_h^n\|_0^2(|e_{rh}^n|^2+\|e_{\phi h}^n\|_0^2),
\end{eqnarray*}		
Then,
\begin{eqnarray*}
|G_5|&=&			
\left| 2\Delta t\left(\frac{r(t^{n+1})}{\sqrt{E_1(t^{n+1})}}\textbf{u}(t^{n+1})\cdot\nabla \textbf{u}(t^{n+1})-\frac{r_h^n}
			{\sqrt{E_{1, h}^n}}\textbf{u}_h^n\cdot\nabla \textbf{u}_h^n,A_h^{-1}\Phi_{u h}^{n+1}\right)\right| \\
&\leq&\Delta t^3\|\textbf{u}_t\|_{L^\infty(0,T;H^1(\Omega))}^2(\|\nabla\textbf{u}(t^{n+1})\|_0^2+\|\nabla\textbf{u}(t^n)\|_0^2)\\
&&+\frac{\Delta t}{6}(\|\Phi_{u h}^{n+1}\|_0^2+\| e_{u h}^n\|_0^2)
+C\Delta t(\|A\textbf{u}(t^n)\|_0^2+\|A_h\textbf{u}_h^n\|_0^2)\|\Phi_{u h}^{n+1}\|_{-1}^2\\
&&+C\Delta t\|\textbf{u}_h^n\|_0^2\|\nabla\textbf{u}_h^n\|_0^2(|e_{rh}^n|^2+\|e_{\phi h}^n\|_0^2),
\end{eqnarray*}	
 $G_6$  is estimated as follows
		\begin{eqnarray*}
			|G_6|&=&\left| 2\Delta t\left( \frac{r(t^{n+1})}
			{\sqrt{E_1(t^{n+1})}}\textbf{u}(t^{n+1})\cdot\nabla\phi(t^{n+1})-\frac{r_h^n}
			{\sqrt{E_{1, h}^n}}\textbf{u}_h^n\cdot\nabla\phi_h^n,\triangle^{-1}_h\Phi_{\mu h}^{n+1}\right) \right| \\
			&=&2\Delta t \Bigg{|}  (\textbf{u}(t^{n+1})\cdot\nabla\phi(t^{n+1}),\triangle^{-1}_h\Phi_{\mu h}^{n+1})-(\textbf{u}(t^n)\cdot\nabla\phi(t^n),\triangle^{-1}_h\Phi_{\mu h}^{n+1})\\
		&&\left.+(\textbf{u}(t^n)\cdot\nabla\phi(t^n),\triangle^{-1}_h\Phi_{\mu h}^{n+1})
		-(\textbf{u}_h^n\cdot\nabla\phi_h^n,\triangle^{-1}_h\Phi_{\mu h}^{n+1})\right.\\
		&&\left.+\frac{e_{rh}^n}{\sqrt{E_1(t^n)}}(
		\textbf{u}_h^n\cdot\nabla\phi_h^n,\triangle^{-1}_h\Phi_{\mu h}^{n+1})\right.\\
		&&\left.+r_h^n\left( \frac{1}{\sqrt{E_1(t^n)}}-\frac{1}{\sqrt{E_{1,h}^n}}\right) (
		\textbf{u}_h^n\cdot\nabla\phi_h^n,\triangle^{-1}_h\Phi_{\mu h}^{n+1})\right|.
		\end{eqnarray*}
        
		By \eqref{s1} and \eqref{s3}, we can derive
		\begin{eqnarray*}
		&&2\Delta t|(\textbf{u}(t^{n+1})\cdot\nabla\phi(t^{n+1}),\triangle^{-1}_h\Phi_{\mu h}^{n+1})-(\textbf{u}(t^n)\cdot\nabla\phi(t^n),\triangle^{-1}_h\Phi_{\mu h}^{n+1})| \\
		&=&2\Delta t|((\textbf{u}(t^{n+1})-\textbf{u}(t^n))\cdot\nabla\phi(t^{n+1})+\textbf{u}(t^n)\cdot\nabla(\phi(t^{n+1})-\phi(t^n)),\triangle^{-1}_h\Phi_{\mu h}^{n+1})|\\
		&\leq&\frac{\Delta t}{5}\|
		\Phi_{\mu h}^{n+1}\|_0^2+C\Delta t^3(\|\textbf{u}_t\|_{L^\infty(0,T;L^2(\Omega))}^2\|\phi(t^{n+1})\|_1^2
		+\|\nabla\textbf{u}(t^n)\|_0^2\|\phi_t\|_{L^\infty(0,T;L^2(\Omega))}^2),
  \end{eqnarray*}
		\begin{eqnarray*}
		&&2\Delta t|(\textbf{u}(t^n)\cdot\nabla\phi(t^n),\triangle^{-1}_h\Phi_{\mu h}^{n+1})
		-(\textbf{u}_h^n\cdot\nabla\phi_h^n,\triangle^{-1}_h\Phi_{\mu h}^{n+1})| \\
		&=&2\Delta t|(e_{uh}^n\cdot\nabla\phi(t^n)
		+\textbf{u}_h^n\cdot\nabla e_{\phi h}^n,\triangle^{-1}_h\Phi_{\mu h}^{n+1})| \\
		&=&2\Delta t|((\Theta_{uh}^n+\Phi_{uh}^n)\cdot\nabla\phi(t^n)
		+\textbf{u}_h^n\cdot\nabla e_{\phi h}^n,\triangle^{-1}_h\Phi_{\mu h}^{n+1})| \\
		&\leq&\frac{\Delta t}{6}\|
		\Phi_{\mu h}^{n+1}\|_0^2+C\Delta t(\|\phi(t^n)\|_1^2(\|\Theta_{uh}^n\|_0^2+\|\Phi_{uh}^n\|_{-1}^2)+\|\nabla\textbf{u}_h^n\|_0^2\|e_{\phi h}^n\|_0^2),
  \end{eqnarray*}
		\begin{eqnarray*}
		&&2\Delta t\left| \frac{e_{rh}^n}{\sqrt{E_1(t^n)}}(
		\textbf{u}_h^n\cdot\nabla\phi_h^n,\triangle^{-1}_h\Phi_{\mu h}^{n+1})+ r_h^n\left( \frac{1}{\sqrt{E_1(t^n)}}-\frac{1}{\sqrt{E_{1,h}^n}}\right) (
		\textbf{u}_h^n\cdot\nabla\phi_h^n,\triangle^{-1}_h\Phi_{\mu h}^{n+1})\right|\\
		&\leq&\frac{\Delta t}{6}\|
		\Phi_{\mu h}^{n+1}\|_0^2+C\Delta t\|\textbf{u}_h^n\|_0^2\|\nabla\phi_h^n\|_0^2(|e_{rh}^n|^2+\|e_{\phi h}^n\|_0^2).
  \end{eqnarray*}
%The last three terms on the right-hand side of \eqref{w} can be estimated as
The last three terms on the right-hand side of \eqref{w} can be estimated as follows:
\begin{eqnarray*}
			|G_7|&=&\left|\Delta te_{rh}^{n+1}\left(  \frac{1}{\sqrt{E_1(t^n)}}(\textbf{u}(t^n)\cdot\nabla \phi(t^n),\mu(t^n))-\frac{r_h^n}{r_h^{n+1}\sqrt{E_{1,h}^n}}(\textbf{u}_h^n\cdot\nabla \phi_h^n,\mu_h^{n+1})\right) \right| \\
            &=&\frac{\Delta t|e_{rh}^{n+1}|}{\sqrt{E_1(t^n)}}|(e_{uh}^n\cdot\nabla \phi(t^n),\mu(t^n))+(\textbf{u}_h^n\cdot\nabla e_{\phi h}^n,\mu(t^n))+(\textbf{u}_h^n\cdot\nabla \phi_h^n,\mu(t^n)-\mu(t^{n+1}))\\
			&&\ \ \ \ \ \ \ \ \ \ \ \ \ \ \
			+(\textbf{u}_h^n\cdot\nabla \phi_h^n,e_{\mu h}^{n+1})|\\
			&&+\frac{\Delta t|e_{r h}^{n+1}|}{r_h^{n+1}}\left(\frac{r_h^{n+1}}{\sqrt{E_1(t^n)}}-\frac{r_h^n}{\sqrt{E_{1,h}^n}} \right)(\textbf{u}_h^n\cdot\nabla \phi_h^n,\mu_h^{n+1})\\
			&\leq&\frac{\Delta t}{6}\|e_{\mu h}^{n+1}\|_0^2+\frac{\Delta t}{4}\|e_{u h}^{n}\|_0^2+C\Delta t^3(\|\mu_t\|_{L^\infty(0,T;H^1(\Omega))}^2+|r_t|^2)\|\nabla\textbf{u}_h^n\|_0^2\|\nabla\phi_h^n\|_0^2\\
			&&+C\Delta t(|e_{rh}^{n+1}|^2+|e_{rh}^n|^2)(\|\nabla\textbf{u}_h^n\|_0^2\|\triangle_h\phi_h^n\|_0^2+\|\mu_h^{n+1}\|_1^2+1)\\
			&&
			+C\Delta t(\|\phi(t^n)\|_2^2\|\mu(t^n)\|_1^2+\|\nabla \textbf{u}_h^n\|_0^2\|\mu(t^n)\|_2^2+\|\nabla\textbf{u}_h^n\|_0^2\|\triangle_h\phi_h^n\|_0^2)\|e_{\phi h}^n\|_0^2,
	\end{eqnarray*}
    \begin{eqnarray*}
|G_8| &=& \left| \Delta t e_{rh}^{n+1} \left( \frac{1}{\sqrt{E_1(t^n)}} (\textbf{u}(t^n) \cdot \nabla \textbf{u}(t^n),\textbf{u}(t^n)) - \frac{r_h^n}{r_h^{n+1}\sqrt{E_{1,h}^n}} (\textbf{u}_h^n \cdot \nabla \textbf{u}_h^n, \textbf{u}_h^{n+1}) \right) \right| \\
&=& \Delta t |e_{rh}^{n+1}| \left| \frac{1}{\sqrt{E_1(t^n)}} (\textbf{u}(t^n) \cdot \nabla \textbf{u}(t^n), \textbf{u}(t^n)) - \frac{r_h^n}{r_h^{n+1}\sqrt{E_{1,h}^n}} (\textbf{u}_h^n \cdot \nabla \textbf{u}_h^n, \textbf{u}_h^{n+1}) \right| \\
&=& \frac{\Delta t |e_{rh}^{n+1}|}{\sqrt{E_{1,h}(t^n)}} \Big| (e_{uh}^n \cdot \nabla \textbf{u}(t^n), \textbf{u}(t^n)) + (\textbf{u}_h^n \cdot \nabla e_{uh}^n, \textbf{u}(t^n)) + (\textbf{u}_h^n \cdot \nabla \textbf{u}_h^n, \textbf{u}(t^n) - \textbf{u}(t^{n+1})) \\
&&\qquad\qquad\qquad + (\textbf{u}_h^n \cdot \nabla \textbf{u}_h^n, e_{uh}^{n+1}) \Big| \\
&& + \frac{\Delta t |e_{rh}^{n+1}|}{r_h^{n+1}} \left( \frac{r_h^{n+1}}{\sqrt{E_1(t^n)}} - \frac{r_h^n}{\sqrt{E_{1,h}^n}} \right) (\textbf{u}_h^n \cdot \nabla \textbf{u}_h^n, \textbf{u}_h^{n+1}) \\
&\leq& \frac{\Delta t}{4}\|e_{u h}^{n}\|_0^2+\frac{\Delta t}{4}\|e_{u h}^{n+1}\|_0^2+C\Delta t^3 \left( \|\textbf{u}_t\|_{L^\infty(0,T;H^1(\Omega))}^2 + |r_t|^2 \|\textbf{u}_h^n\|_0^2 \|\nabla \textbf{u}_h^n\|_0^2 \right)\\
&&+ C\Delta t \Big( \|A\textbf{u}(t^n)\|_0^2 ( \|\nabla\textbf{u}(t^n)\|_0^2 + \|\nabla \textbf{u}_h^n\|_0^2)  + \|\textbf{u}_h^n\|_0^2 \|\nabla\textbf{u}_h^n\|_0^2 + \|\textbf{u}_h^n\|_1^2 \|A_h \textbf{u}_h^n\|_0^2  \Big) \left( |e_{rh}^{n+1}|^2 + |e_{rh}^n|^2 \right. \\
&& \left. + \|\Theta_{\phi h}^n\|_0^2+ \|\Phi_{\phi h}^n\|_0^2   \right),
\end{eqnarray*}
\begin{eqnarray*}
|G_9| &= &\left| \Delta t e_{rh}^{n+1} \left( \frac{1}{\sqrt{E_1(t^n)}} (\mu(t^n) \cdot \nabla \phi(t^n), \textbf{u}(t^n)) - \frac{r_h^n}{r_h^{n+1}\sqrt{E_{1,h}^n}} (\mu_h^n \cdot \nabla \phi_h^n,  \textbf{u}_h^{n+1}) \right) \right| \\
&\leq& C\Delta t^3 \left( \|\textbf{u}_t\|_{L^\infty(0,T;H^1(\Omega))}^2 + |r_t|^2 \|\mu_h^n\|_0^2 \|\nabla \phi_h^n\|_0^2 \right) + C\Delta t \Big( \|A\textbf{u}(t^n)\|_0^2 (\|\nabla \phi(t^n)\|_0^2 \\
&& + \|\mu_h^n\|_1^2) + \|\mu_h^n\|_1^2 \|\nabla \phi_h^n\|_0^2 + \|\mu_h^n\|_1^2 \|\phi_h^n\|_2^2 + \|\nabla u_h^{n+1}\|_0^2 \Big) \left( |e_{rh}^{n+1}|^2 + |e_{rh}^n|^2 \right. \\
&& \left. + \|\theta_{\phi h}^n\|_0^2 + \|\Phi_{\phi h}^n\|_0^2 \right) + \frac{\Delta t}{4}\|e_{uh}^{n+1}\|_0^2.
\end{eqnarray*}

		%Substituting the above inequalities into \eqref{w}, summing from $ n = 0 $ to $ m $, using \eqref{orth2}, \eqref{orth4}, Lemmas \ref{truncation}, \ref{lem5.1}-\ref{lem5.3} and the triangle inequality, we finish the proof.
        Substituting the above inequalities into \eqref{w}, summing the resulting inequality from $n=0$ to $m$, and applying \eqref{orth2}, \eqref{orth4}, Lemma~\ref{truncation}, Lemmas~\ref{lem5.1}--\ref{lem5.3}, and the triangle inequality, we obtain the desired result.
	\end{proof}

\begin{thm}\label{3.6}
	For all $ m \geq 0 $, under the assumptions of Theorem \ref{thm5.5} and $e_{u h}^0=0$, it holds
	\begin{eqnarray*}
		\|e_{u h}^{m+1}\|_0^2
		+\Delta t\sum_{n=0}^m\|d_te_{u h}^{n+1}\|_{-1}^2
		\leq C(\Delta t^2+h^4).
	\end{eqnarray*}
\end{thm}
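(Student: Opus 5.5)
We take $\mathbf{v}_h=2\Delta t\,\Phi_{uh}^{n+1}$ in (\ref{eequ}c). This is the test function of Lemma~\ref{lemu}, but now the estimate is organized to be $O(\Delta t^2+h^4)$ rather than $O(\Delta t^2+h^2)$. All lower-order quantities are fed from Theorem~\ref{thm5.5}: $\|e_{\phi h}\|_0$, $\|e_{uh}\|_{-1}$, $|e_{rh}|$, and $\Delta t\sum\|e_{\mu h}\|_0^2$. Since $e_{uh}=\Theta_{uh}+\Phi_{uh}$ and $\|\Theta_{uh}^{n}\|_0\le Ch^2\|A\mathbf{u}(t^n)\|_0$ by \eqref{orth2}, it suffices to bound $\|\Phi_{uh}^{m+1}\|_0^2$. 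The $L^2$-orthogonality \eqref{orth1} kills the time-derivative projection term, since $(d_t\Theta_{uh}^{n+1},\Phi_{uh}^{n+1})=0$.

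The energy identity is as in \eqref{2error}, except that the viscous projection term $(\nabla\Theta_{uh}^{n+1},\nabla\Phi_{uh}^{n+1})$ must not be estimated crudely by $\|\nabla\Theta_{uh}\|_0^2=O(h^2)$. To handle it, we would replace $P_h$ in the splitting by the Stokes (Ritz) projection of $(\mathbf{u},p)$, for which this term and the pressure term vanish. The price is a term $(d_t\Theta_{uh}^{n+1},\Phi_{uh}^{n+1})$, bounded by $Ch^2\|\mathbf{u}_t\|_{1}$. If one wants to stay with $P_h$ instead, we would bound $2\Delta t|(\nabla\Theta_{uh}^{n+1},\nabla\Phi_{uh}^{n+1})|=2\Delta t|(\Theta_{uh}^{n+1},A_h\Phi_{uh}^{n+1})|$ using a superconvergence/duality argument. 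We expect the Stokes-projection route to be cleaner, and that is the first thing we would try.

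For the nonlinear terms $H_2,H_3$ and the convection term, we reuse the decompositions from Lemma~\ref{lemu}. Each error factor is now placed in the weakest available norm. In $(e_{\mu h}^n\nabla\phi(t^n),\Phi_{uh}^{n+1})$ we use $\|e_{\mu h}^n\|_0\|\nabla\phi\|_{L^4}\|\Phi_{uh}^{n+1}\|_{L^4}$ and absorb $\|\nabla\Phi_{uh}^{n+1}\|_0^2$. For $(\mu_h^n\nabla e_{\phi h}^n,\Phi_{uh}^{n+1})$ we integrate by parts to move the gradient off $e_{\phi h}^n$, which leaves $\|e_{\phi h}^n\|_0$ times $\|\mu_h^n\|_{1}$-type and $\|\Phi_{uh}^{n+1}\|_1$ factors. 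This requires the $H^1$ stability of $\mu_h^n$ from Lemma~\ref{lem5.1} and the $L^\infty$ bounds on $\mu_h$ and $\phi_h$ from Lemma~\ref{lem5.2} and \eqref{s4}. The SAV factors are controlled by $|e_{rh}^n|$ and, via Lemma~\ref{lem2.1}, by $\|e_{\phi h}^n\|_0$. The convective difference is split as $(e_{uh}^n\cdot\nabla\mathbf{u}+\mathbf{u}_h^n\cdot\nabla e_{uh}^n,\Phi_{uh}^{n+1})$, and the skew-symmetry \eqref{triliform} removes the $\Phi_{uh}$ self-interaction. The remaining pieces are bounded by $C(\|\nabla\mathbf{u}_h^n\|_0^2+\|A\mathbf{u}\|_0^2)\|\Phi_{uh}^n\|_0^2$ plus $h^4$ terms. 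The truncation term is handled by $\|R_u\|_{-1}$ from Lemma~\ref{truncation}.

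After summing over $n$, the discrete Gronwall lemma applies, with coefficients $d_n$ involving $\|A_h\mathbf{u}_h^n\|_0^2$ and $\|\mu_h^n\|_1^2$; these are summable by Lemmas~\ref{lem5.1} and \ref{lem5.3}. This gives $\|\Phi_{uh}^{m+1}\|_0^2+\Delta t\sum\|\nabla\Phi_{uh}^{n+1}\|_0^2\le C(\Delta t^2+h^4)$, since every remaining input is $O(\Delta t^2+h^4)$ by Theorem~\ref{thm5.5}. Finally, we obtain $\|d_te_{uh}^{n+1}\|_{-1}$ by duality from (\ref{eequ}c), testing with discretely divergence-free $\mathbf{v}_h$ (so the pressure term drops). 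This gives $\|d_te_{uh}^{n+1}\|_{-1}\le C(\|\nabla e_{uh}^{n+1}\|_0+\|R_u\|_{-1}+\text{nonlinear differences in }H^{-1})$. Squaring and summing with the bounds just obtained yields the claim.

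We expect the main obstacle to be the viscous projection term and the term $\nabla e_{\phi h}^n$, which in Lemma~\ref{lemu} only give $O(h^2)$. Securing $h^4$ there is the delicate point, and we would resolve it by the Stokes-projection splitting together with integration by parts against the $H^2$-bounded $\mu,\phi$.
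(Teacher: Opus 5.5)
Your route differs from the paper's, and as written its final step does not give the stated $h^4$ rate.

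\textbf{The final duality step loses $h^4$.} You bound $\|d_te_{uh}^{n+1}\|_{-1}$ by $C(\|\nabla e_{uh}^{n+1}\|_0+\|R_u\|_{-1}+\cdots)$ and then sum. But $\nabla e_{uh}^{n+1}=\nabla\Theta_{uh}^{n+1}+\nabla\Phi_{uh}^{n+1}$, and $\|\nabla\Theta_{uh}^{n+1}\|_0$ is genuinely of order $h$ for the $P_1$-bubble velocity space, whatever projection you use. Hence $\Delta t\sum_n\|\nabla e_{uh}^{n+1}\|_0^2=O(\Delta t^2+h^2)$, and your argument gives only $h^2$ for $\Delta t\sum_n\|d_te_{uh}^{n+1}\|_{-1}^2$.

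The same loss appears in the pressure term. For a discretely divergence-free $\mathbf v_h$ only $(p_h^{n+1},\nabla\cdot\mathbf v_h)$ vanishes. The exact-pressure part $(p(t^{n+1}),\nabla\cdot\mathbf v_h)=(p(t^{n+1})-q_h,\nabla\cdot\mathbf v_h)$ is in general only $O(h)\|\nabla\mathbf v_h\|_0$.

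These two $O(h)$ pieces must be kept together. By the defining relation of the Stokes projection $(S_h\mathbf u,S_hp)$, for $\mathbf v_h\in\mathbf V_h$ one has $(\nabla e_{uh}^{n+1},\nabla\mathbf v_h)-(e_{ph}^{n+1},\nabla\cdot\mathbf v_h)=(\nabla\Phi_{uh}^{n+1},\nabla\mathbf v_h)$. The duality argument then gives $\|d_te_{uh}^{n+1}\|_{-1}\le\|\nabla\Phi_{uh}^{n+1}\|_0+\|R_u\|_{-1}+\|\text{nonlinear differences}\|_{-1}$. Only with $\|\nabla\Phi_{uh}^{n+1}\|_0$ in place of $\|\nabla e_{uh}^{n+1}\|_0$ does your energy bound $\Delta t\sum_n\|\nabla\Phi_{uh}^{n+1}\|_0^2\le C(\Delta t^2+h^4)$ close the estimate.

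\textbf{Regularity.} The term $(d_t\Theta_{uh}^{n+1},\Phi_{uh}^{n+1})$ is not $O(h^2\|\mathbf u_t\|_1)$. From $\mathbf u_t\in\mathbf H^1$ the Stokes projection gives only $O(h)$ in $L^2$. The correct estimates are $\|(I-S_h)\mathbf u_t\|_0\le Ch^2(\|\mathbf u_t\|_2+\|p_t\|_1)$ and $\|\Theta_{uh}\|_0\le Ch^2(\|\mathbf u\|_2+\|p\|_1)$. So your route needs $\mathbf u_t\in L^2(0,T;\mathbf H^2(\Omega))$, $p\in L^\infty(0,T;H^1(\Omega))$ and $p_t\in L^2(0,T;H^1(\Omega))$. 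None of these is contained in Assumption~\ref{aassume}, so your route proves the theorem only under strengthened hypotheses.

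\textbf{Convection terms.} The explicit piece $(\mathbf u_h^n\cdot\nabla\Phi_{uh}^n,\Phi_{uh}^{n+1})$ couples two time levels, and $\mathbf u_h^n$ is only discretely divergence-free, so \eqref{triliform} does not remove it. It must be bounded, e.g. by $\|\mathbf u_h^n\|_{L^4}\|\nabla\Phi_{uh}^n\|_0\|\Phi_{uh}^{n+1}\|_{L^4}$, and absorbed. Also, $(\mathbf u_h^n\cdot\nabla\Theta_{uh}^n,\Phi_{uh}^{n+1})$ must be integrated by parts, since $\|\nabla\Theta_{uh}^n\|_0=O(h)$.

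\textbf{Comparison with the paper.} The paper never splits $e_{uh}$ in this proof and has no separate duality step. It tests (\ref{eequ}c) with $\mathbf v_h=-2\Delta t A_h^{-1}d_te_{uh}^{n+1}$. Then $2\Delta t\|d_te_{uh}^{n+1}\|_{-1}^2$ appears on the left, and the viscous term becomes the telescoping quantity $\|e_{uh}^{n+1}\|_0^2-\|e_{uh}^n\|_0^2+\|e_{uh}^{n+1}-e_{uh}^n\|_0^2$. The nonlinear differences are paired with $A_h^{-1}d_te_{uh}^{n+1}$ and bounded in the weak quantities $\|e_{\phi h}^n\|_0$, $\|e_{\mu h}^n\|_0$, $|e_{rh}^n|$, $\|e_{uh}^n\|_0$. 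The proof is closed with Theorem~\ref{thm5.5} and Gronwall's lemma.

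That identity tacitly treats $e_{uh}$ as a discrete function and omits the pressure term. Your Stokes-projection framework makes exactly this issue explicit. With the correction above, your route is a more careful, though more regularity-demanding, alternative.
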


\begin{proof}
	Taking $\textbf{v}_h=-2\Delta tA^{-1}_hd_te_{u h}^{n+1}$ in (\ref{eequ}c), it holds
	\begin{eqnarray}\label{eo1}
		&&\|e_{u h}^{n+1}\|_0^2-\|e_u^n\|_0^2
		+\|e_{u h}^{n+1}-e_u^n\|_0^2
		+2\Delta t\|d_te_{u h}^{n+1}\|_{-1}^2\nonumber\\
		&=&2\Delta t\left( \frac{r(t^{n+1})}{\sqrt{E_1(t^{n+1})}}\textbf{u}(t^{n+1})\cdot\nabla \textbf{u}(t^{n+1})-\frac{r_h^n}{\sqrt{E_{1,h}^n}}\textbf{u}_h^n\cdot\nabla \textbf{u}_h^n,A_h^{-1} d_t e_{uh}^{n+1}\right) \nonumber\\
			&&-2\Delta t\left( \frac{r(t^{n+1})}{\sqrt{E_1(t^{n+1})}}\mu(t^{n+1})\nabla\phi(t^{n+1})
			-\frac{r_h^n}{\sqrt{E_{1,h}^n}}\mu_h^n\nabla\phi_h^n,A_h^{-1} d_t e_{uh}^{n+1}\right)\nonumber\\
            &&-2\Delta t(R_u, A_h^{-1} d_t e_{uh}^{n+1})\nonumber\\
		&=&P_1+P_2+P_3.
	\end{eqnarray}
	We estimate the terms on the right-hand side of \eqref{eo1} as follows:
	%We can treat the right-hand terms of \eqref{eo1} as follows
	\begin{eqnarray*}
		|P_1| &=& \left| 2\Delta t \left( \frac{r(t^{n+1})}{\sqrt{E_1(t^{n+1})}} \textbf{u}(t^{n+1}) \cdot \nabla \textbf{u}(t^{n+1}) - \frac{r_h^n}{\sqrt{E_{1,h}^n}} \textbf{u}_h^n \cdot \nabla \textbf{u}_h^n, A_h^{-1} d_t e_{uh}^{n+1} \right) \right| \\
&=& 2\Delta t \left| \left( (\textbf{u}(t^{n+1}) - \textbf{u}(t^n)) \cdot \nabla \textbf{u}(t^{n+1}) + \textbf{u}(t^n) \cdot \nabla(\textbf{u}(t^{n+1}) - \textbf{u}(t^n)), A_h^{-1} d_t e_{uh}^{n+1} \right) \right. \\
&& + (e_{uh}^n \cdot \nabla \textbf{u}(t^n), A_h^{-1} d_t e_{uh}^{n+1}) + (\textbf{u}_h^n \cdot \nabla e_{uh}^n, A_h^{-1} d_t e_{uh}^{n+1}) \\
&& + \frac{e_{rh}^n}{\sqrt{E_1(t^n)}} (\textbf{u}_h^n \cdot \nabla \textbf{u}_h^n, A_h^{-1} d_t e_{uh}^{n+1}) \\
&& \left. + r_h^n \left( \frac{1}{\sqrt{E_1(t^n)}} - \frac{1}{\sqrt{E_{1,h}^n}} \right) (\textbf{u}_h^n \cdot \nabla \textbf{u}_h^n, A_h^{-1} d_t e_{uh}^{n+1}) \right|\\
&\leq& \frac{\Delta t}{3} \|d_t e_{uh}^{n+1}\|_{-1}^2 + C\Delta t^3 \|\textbf{u}_t\|_{L^\infty(0,T;H^1(\Omega))}^2 \left( \|\nabla \textbf{u}(t^{n+1})\|_0^2 + \|\nabla \textbf{u}(t^n)\|_0^2 \right) \\
&& + C\Delta t \left( \|A\textbf{u}(t^n)\|_0^2 + \|A_h \textbf{u}_h^n\|_0^2 + \|\nabla \textbf{u}_h^n\|_0^4 \right) \left( |e_{rh}^n|^2 + \|e_{\phi h}^n\|_0^2 + \|e_{uh}^n\|_0^2 \right).\\
		|P_2| &=& \left| 2\Delta t \left( \frac{r(t^{n+1})}{\sqrt{E_1(t^{n+1})}} \mu(t^{n+1}) \nabla\phi(t^{n+1}) - \frac{r_h^n}{\sqrt{E_{1,h}^n}} \mu_h^n \nabla\phi_h^n, A_h^{-1} d_t e_{uh}^{n+1} \right) \right| \\
&=& 2\Delta t \left| \left( (\mu(t^{n+1}) - \mu(t^n)) \cdot \nabla\phi(t^{n+1}) + \mu(t^n) \cdot \nabla(\phi(t^{n+1}) - \phi(t^n)), A_h^{-1} d_t e_{uh}^{n+1} \right) \right. \\
&& + (e_{\mu h}^n \cdot \nabla\phi(t^n), A_h^{-1} d_t e_{uh}^{n+1}) + (\mu_h^n \cdot \nabla e_{\phi h}^n, A_h^{-1} d_t e_{uh}^{n+1}) \\
&& + \frac{e_{rh}^n}{\sqrt{E_1(t^n)}} (\mu_h^n \cdot \nabla\phi_h^n, A_h^{-1} d_t e_{uh}^{n+1}) \\
&& \left. + r_h^n \left( \frac{1}{\sqrt{E_1(t^n)}} - \frac{1}{\sqrt{E_{1,h}^n}} \right) (\mu_h^n \cdot \nabla\phi_h^n, A_h^{-1} d_t e_{uh}^{n+1}) \right|\\
		&\leq& \frac{\Delta t}{3} \|d_t e_{uh}^{n+1}\|_{-1}^2 + C\Delta t^3 \left( \|\mu_t\|_{L^\infty(0,T;H^1(\Omega))}^2 + \|\phi_t\|_{L^\infty(0,T;H^1(\Omega))}^2 \right) \left( \|\phi(t^{n+1})\|_2^2 + \|\mu(t^n)\|_1^2 \right) \\
&& + C\Delta t \left( \|\phi(t^n)\|_2^2 \|e_{\mu h}^n\|_0^2 + (\|\mu_h^n\|_1^2 + \|\mu_h^n\|_1^2 \|\phi_h^n\|_2^2) \left( \|e_{\phi h}^n\|_0^2 + |e_{rh}^n|^2 \right) \right).\\
|P_3| &=& |2\Delta t (R_u, A_h^{-1} d_t e_{uh}^{n+1})| \leq \frac{\Delta t}{3} \|d_t e_{uh}^{n+1}\|_{-1}^2 + C\Delta t \|R_u\|_{-1}^2.
	\end{eqnarray*}
	%Combining above inequalities with \eqref{eo1} and summing from $ n = 0 $ to $ m $, using Lemma \ref{lem1}-\ref{lem3}, \ref{lem5.1}-\ref{lem5.3}  and Theorem \ref{thm5.5}, we completes the proof.
    % Combining the above inequalities with \eqref{eo1} and summing over $n=0,\dots,m$, and using Theorem \ref{thm5.5}, we complete the proof.
    Combining the above inequalities with \eqref{eo1}, summing over $n=0,\dots,m$, and applying Theorem~\ref{thm5.5}, we complete the proof.

\end{proof}

%In the following process of proving the error estimate of pressure, we will encounter a problem term $ \|e_u^{n+1}-e_u^n\|_{-1} $, so we give the following theorem.
In the subsequent analysis of the pressure error estimate, the term $|e_u^{n+1}-e_u^n|_{-1}$ arises. To handle this term, we establish the following lemma.
	\begin{lem}\label{lemp}
		Under \Cref{aassume},
		for all $m\geq0$ and $e_{u h}^0=0$, it holds
		\begin{eqnarray*}
			\sum_{n=0}^m\|e_u^{n+1}-e_u^n\|_{-1}^2\leq
			C\Delta t(\Delta t^2+h^{2}).
		\end{eqnarray*}
	\end{lem}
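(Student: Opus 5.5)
The bound will follow from Theorem \ref{3.6}, which already controls $\Delta t\sum_{n=0}^m\|d_te_{uh}^{n+1}\|_{-1}^2$. Since $e_u^{n+1}-e_u^n=\Delta t\, d_te_{uh}^{n+1}$, we have
\begin{eqnarray*}
\sum_{n=0}^m\|e_u^{n+1}-e_u^n\|_{-1}^2=\Delta t\cdot\Delta t\sum_{n=0}^m\|d_te_{uh}^{n+1}\|_{-1}^2\leq C\Delta t(\Delta t^2+h^4)\leq C\Delta t(\Delta t^2+h^2),
\end{eqnarray*}
which is the claim. Two technical points must be addressed before this one-line argument is legitimate.

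\textbf{The norm on the full error.} The norm $\|\cdot\|_{-1}=\|A_h^{-1/2}\cdot\|_0$ is defined only on $\mathbf{V}_h$, whereas $e_{uh}^n=\Theta_{uh}^n+\Phi_{uh}^n$ also contains the projection error. There are two options:
\begin{itemize}
\item interpret $\|\cdot\|_{-1}$ as the dual norm of $\mathbf{H}_0^1$ (equivalently, apply it to $P_h e_{uh}$);
\item split $e_{uh}^{n+1}-e_{uh}^n=(\Theta_{uh}^{n+1}-\Theta_{uh}^n)+(\Phi_{uh}^{n+1}-\Phi_{uh}^n)$.
\end{itemize}
With the splitting, the $\Theta$ part is bounded by \eqref{orth2} applied to $\mathbf{u}(t^{n+1})-\mathbf{u}(t^n)=\int_{t^n}^{t^{n+1}}\mathbf{u}_t\,dt$. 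Using $\mathbf{u}_t\in L^\infty(0,T;\mathbf{L}^2)$ from \Cref{aassume}, this gives $\|\Theta_{uh}^{n+1}-\Theta_{uh}^n\|_{-1}\le Ch\,\Delta t\,\|\mathbf{u}_t\|_{L^\infty(L^2)}$. Summing $m\le T/\Delta t$ such terms yields $Ch^2\Delta t$, which already fits the target $C\Delta t(\Delta t^2+h^2)$. For this reason I expect only $h^2$, not $h^4$, to be claimed. The $\Phi$ part lies in $\mathbf{X}_h$, and one uses $\|\Phi_{uh}^{n+1}-\Phi_{uh}^n\|_{-1}\le \|e_{uh}^{n+1}-e_{uh}^n\|_{-1}+\|\Theta_{uh}^{n+1}-\Theta_{uh}^n\|_{-1}$, or works directly with the test function $A_h^{-1}$ as in Theorem \ref{3.6}.

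\textbf{Avoiding circularity (main obstacle).} Theorem \ref{3.6} was obtained by testing (\ref{eequ}c) with $A_h^{-1}d_te_{uh}^{n+1}$, and its right-hand side involves $\|e_{\mu h}^n\|_0$, $\|e_{\phi h}^n\|_0$, $|e_{rh}^n|$ and $\|e_{uh}^n\|_0$. These are controlled by Theorem \ref{thm5.5} and Proposition \ref{prop}, so there is no circularity. The delicate step is the cross term where the pressure might enter. To handle it, I would use the discrete divergence-free property (\ref{eequ}d) together with the fact that $A_h^{-1}$ maps into $\mathbf{V}_h$, so that the pressure term $(e_{ph}^{n+1},\nabla\cdot \mathbf{v}_h)$ drops out.

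If this cancellation is unavailable, the fallback is to bound $\|d_te_{uh}^{n+1}\|_{-1}$ directly by duality from (\ref{eequ}c) restricted to $\mathbf{V}_h$. This gives
\begin{eqnarray*}
\|d_te_{uh}^{n+1}\|_{-1}\le \|\nabla e_{uh}^{n+1}\|_0+\|R_u\|_{-1}+\text{(nonlinear differences in } \mathbf{H}^{-1}).
\end{eqnarray*}
The nonlinear differences are bounded exactly as in $P_1$ and $P_2$. Squaring and summing, with Proposition \ref{prop} controlling $\Delta t\sum\|\nabla e_{uh}^{n+1}\|_0^2\le C(\Delta t^2+h^2)$ and Lemma \ref{truncation} controlling the truncation terms, gives $\sum\|e_u^{n+1}-e_u^n\|_{-1}^2\le \Delta t\cdot C(\Delta t^2+h^2)$, as required.
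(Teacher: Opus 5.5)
Your main deduction is correct, but it is not how the paper argues. You obtain the lemma as an immediate corollary of Theorem~\ref{3.6}: since $e_{uh}^{n+1}-e_{uh}^n=\Delta t\,d_te_{uh}^{n+1}$ exactly, $\sum_{n=0}^m\|e_{uh}^{n+1}-e_{uh}^n\|_{-1}^2=\Delta t\cdot\Delta t\sum_{n=0}^m\|d_te_{uh}^{n+1}\|_{-1}^2\le C\Delta t(\Delta t^2+h^4)$, which is sharper than claimed.

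The paper instead reruns the energy argument from scratch. It tests (\ref{eequ}c) with $\Delta t A_h^{-1}(e_{uh}^{n+1}-e_{uh}^n)$. Up to the factor $\Delta t/2$ and immaterial signs, this gives the same identity as in Theorem~\ref{3.6}. It then absorbs $\frac12\|e_{uh}^{n+1}-e_{uh}^n\|_{-1}^2$ and bounds the rest by $C\Delta t^2\|R_u\|_{-1}^2$, $C\Delta t^4(\ldots)$ and $C\Delta t^2(\ldots)(\|e_{uh}^n\|_1^2+\|e_{\mu h}^n\|_1^2+|e_{rh}^n|^2+\|e_{\phi h}^n\|_0^2)$. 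After summation these are controlled by Lemma~\ref{truncation} and the $H^1$-level Lemmas~\ref{thm4.1}--\ref{lemu}. That, and not the projection part you point to, is why only $h^2$ appears in the statement.

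The paper's route keeps the lemma independent of the negative-norm analysis of Theorem~\ref{thm5.5}. Your route formally inherits that theorem's step-size condition $d_n\Delta t<1$, which the lemma does not list. In exchange, yours is shorter, gives $h^4$, and shows the lemma is redundant: the pressure proof only uses $\frac{1}{\Delta t}\|e_{uh}^{n+1}-e_{uh}^n\|_{-1}=\|d_te_{uh}^{n+1}\|_{-1}$, which Theorem~\ref{3.6} already controls.

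Your two ``technical points'' are not needed for the corollary. The norm convention and the pressure cancellation are internal to Theorem~\ref{3.6} and pass through the exact identity unchanged. Note also that, as defined, $A_h^{-1}$ maps into $\mathbf{X}_h$, not $\mathbf{V}_h$, so the pressure cancellation is an implicit assumption of the paper rather than a fact you can cite. Your duality fallback via Proposition~\ref{prop} is essentially the paper's own proof.
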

\begin{proof}
		Taking $\textbf{v}_h=\Delta tA_h^{-1}(e_{uh}^{n+1}-e_{uh}^n)$ in (\ref{eequ}c), we have		
		\begin{eqnarray}\label{eo}
			&&\|e_{uh}^{n+1}-e_{uh}^n\|_{-1}^2-(\triangle_h e_{uh}^{n+1},\Delta tA_h^{-1}(e_{uh}^{n+1}-e_{uh}^n))\nonumber\\
			&=&\Delta t\left( 
			\left( \frac{r(t^{n+1})}{\sqrt{E_1(t^{n+1})}}\mu(t^{n+1})\nabla\phi(t^{n+1})-\frac{r_h^n}
			{\sqrt{E_{1, h}^n}}\mu_h^n\nabla\phi_h^n,A_h^{-1}(e_{uh}^{n+1}-e_{uh}^n)\right.\right) \nonumber\\
			&&-\left.\left( \frac{r(t^{n+1})}{\sqrt{E_1(t^{n+1})}}\textbf{u}(t^{n+1})\cdot\nabla \textbf{u}(t^{n+1})-\frac{r_h^n}
			{\sqrt{E_{1, h}^n}}\textbf{u}_h^n\cdot\nabla \textbf{u}_h^n,A_h^{-1}(e_{uh}^{n+1}-e_{uh}^n)\right) \right) 
			\nonumber\\
			&&+\Delta t(R_u,A_h^{-1}(e_{uh}^{n+1}-e_{uh}^n)).
		\end{eqnarray}
		
		For the second term on the left side of \eqref{eo}, it follows
		\begin{eqnarray*}
			&&-(\triangle_h e_{uh}^{n+1},\Delta tA_h^{-1}(e_{uh}^{n+1}-e_{uh}^n))\\
            &=&(e_{uh}^{n+1},-\Delta t\triangle_h A_h^{-1}(e_{uh}^{n+1}-e_{uh}^n))=\Delta t(e_{uh}^{n+1}, e_{uh}^{n+1}-e_{uh}^n)\\
			&=&\frac{\Delta t}{2}(\|e_{uh}^{n+1}\|_0^2-\|e_{uh}^n\|_0^2+\|e_{uh}^{n+1}-e_{uh}^n\|_0^2).
		\end{eqnarray*}
		Then, \eqref{eo} yields
		\begin{eqnarray}\label{eoo}
			&&\|e_{uh}^{n+1}-e_{uh}^n\|_{-1}^2+\frac{\Delta t}{2}(\|e_{uh}^{n+1}\|_0^2-\|e_{uh}^n\|_0^2+\|e_{uh}^{n+1}-e_{uh}^n\|_0^2)\nonumber\\
			&=&\Delta t(R_u,A_h^{-1}(e_{uh}^{n+1}-e_{uh}^n))\nonumber\\
			&&+\Delta t\left( 
			\left( \frac{r(t^{n+1})}{\sqrt{E_1(t^{n+1})}}\mu(t^{n+1})\nabla\phi(t^{n+1})-\frac{r_h^n}
			{\sqrt{E_{1, h}^n}}\mu_h^n\nabla\phi_h^n,A_h^{-1}(e_{uh}^{n+1}-e_{uh}^n)\right.\right) \nonumber\\
			&&-\left.\left( \frac{r(t^{n+1})}{\sqrt{E_1(t^{n+1})}}\textbf{u}(t^{n+1})\cdot\nabla \textbf{u}(t^{n+1})-\frac{r_h^n}
			{\sqrt{E_{1, h}^n}}\textbf{u}_h^n\cdot\nabla \textbf{u}_h^n,A_h^{-1}(e_{uh}^{n+1}-e_{uh}^n)\right) \right) .
		\end{eqnarray}
		For terms on the right hand side of \eqref{eoo}, we have
%		The right hand side terms of \eqref{eoo} can be treated as follows
		\begin{eqnarray*}
			|\Delta t(R_u,A_h^{-1}(e_{uh}^{n+1}-e_{uh}^n))|&\leq&\frac{1}{6}\|e_{uh}^{n+1}-e_{uh}^n\|_{-1}^2+C\Delta t^2\|R_u^n\|_{-1}^2,
		\end{eqnarray*}
		\begin{eqnarray*}
			&&\left| \Delta t\left(\frac{r(t^{n+1})}{\sqrt{E_1(t^{n+1})}}\mu(t^{n+1})\nabla\phi(t^{n+1})-\frac{r_h^n}
			{\sqrt{E_{1, h}^n}}\mu_h^n\nabla\phi_h^n,A_h^{-1}(e_{uh}^{n+1}-e_{uh}^n)\right)\right| \\
            &=&\Delta t \left| (\mu(t^{n+1})\cdot\nabla\phi(t^{n+1}),A_h^{-1}(e_{uh}^{n+1}-e_{uh}^n))-(\mu(t^n)\cdot\nabla\phi(t^n),A_h^{-1}(e_{uh}^{n+1}-e_{uh}^n))\right.\\
			&&\left.+(e_{\mu h}^n\cdot\nabla\phi(t^n),A_h^{-1}(e_{uh}^{n+1}-e_{uh}^n))
			+(\mu_h^n\cdot\nabla e_{\phi h}^n,A_h^{-1}(e_{uh}^{n+1}-e_{uh}^n))\right.\\
			&&+\frac{e_{rh}^n}{\sqrt{E_1(t^n)}}(
			\mu_h^n\cdot\nabla\phi_h^n,A_h^{-1}(e_{uh}^{n+1}-e_{uh}^n))\\
			&&\left.
			+r_h^n\left( \frac{1}{\sqrt{E_1(t^n)}}-\frac{1}{\sqrt{E_{1,h}^n}}\right) (
			\mu_h^n\cdot\nabla\phi_h^n,A_h^{-1}(e_{uh}^{n+1}-e_{uh}^n))\right|\\
            &\leq&\frac{1}{6}\|e_{uh}^{n+1}-e_{uh}^n\|_{-1}^2+C\Delta t^4(\|\mu_t\|_{L^\infty(0,T;H^1(\Omega))}^2\|\phi(t^{n+1})\|_1^2+\|\mu(t^n)\|_1^2\|\phi_t\|_{L^\infty(0,T;H^1(\Omega))}^2)\\
            &&+C\Delta t^2(\|\phi(t^n)\|_1^2\|e_{\mu h}^n\|_1^2+\|\mu_h^n\|_1^2\|\phi_h^n\|_1^2(|e_{rh}^n|^2+\|e_{\phi h}^n\|_0^2)). 
   \end{eqnarray*}
		Similarly,
		\begin{eqnarray*}
			&&	\left| \Delta t\left( \frac{r(t^{n+1})}{\sqrt{E_1(t^{n+1})}}\textbf{u}(t^{n+1})\cdot\nabla \textbf{u}(t^{n+1})-\frac{r_h^n}
			{\sqrt{E_{1, h}^n}}\textbf{u}_h^n\cdot\nabla \textbf{u}_h^n,A_h^{-1}(e_{uh}^{n+1}-e_{uh}^n)\right) \right| \\
			&=&\Delta t\Big{|}(\textbf{u}(t^{n+1})\cdot\nabla\textbf{u}(t^{n+1}),A_h^{-1}(e_{uh}^{n+1}-e_{uh}^n))-(\textbf{u}(t^n)\cdot\nabla\textbf{u}(t^n),A_h^{-1}(e_{uh}^{n+1}-e_{uh}^n))\\
			&&+(e_{uh}^n\cdot\nabla\textbf{u}(t^n),A_h^{-1}(e_{uh}^{n+1}-e_{uh}^n))
			+(\textbf{u}_h^n\cdot\nabla e_{uh}^n,A_h^{-1}(e_{uh}^{n+1}-e_{uh}^n))\\
			&&+\frac{e_{rh}^n}{\sqrt{E_1(t^n)}}(
			\textbf{u}_h^n\cdot\nabla\textbf{u}_h^n,A_h^{-1}(e_{uh}^{n+1}-e_{uh}^n))\\
			&&+r_h^n\left( \frac{1}{\sqrt{E_1(t^n)}}-\frac{1}{\sqrt{E_{1,h}^n}}\right) (
			\textbf{u}_h^n\cdot\nabla\textbf{u}_h^n,A_h^{-1}(e_{uh}^{n+1}-e_{uh}^n))\Big{|}\\
			&\leq&\frac{1}{6}\|e_{uh}^{n+1}-e_{uh}^n\|_{-1}^2+C\Delta t^4\|\textbf{u}_t\|_{L^\infty(0,T;H^1(\Omega))}^2(\|\nabla\textbf{u}(t^{n+1})\|_0^2+\|\nabla\textbf{u}(t^n)\|_0^2)\\
  &&+C\Delta t^2(\|\nabla\textbf{u}(t^n)\|_0^2+\|\nabla\textbf{u}_h^n\|_0^2)\| e_{uh}^n\|_1^2+\| \textbf{u}_h^n\|_0^2\|\nabla\textbf{u}_h^n\|_0^2(|e_{rh}^n|^2+\|e_{\phi h}^n\|_0^2).
\end{eqnarray*}  
		% Combining the above inequalities with  \eqref{eoo} and summing from $ n = 0 $ to $ m $, using Lemma \ref{truncation}, Lemma \ref{thm4.1}-Lemma \ref{lemu}, we derive the desired results.
        Combining the above inequalities with \eqref{eoo}, summing the resulting inequality from $n=0$ to $m$, and applying Lemma~\ref{truncation} and Lemmas~\ref{thm4.1}--\ref{lemu}, we obtain the desired result.
	\end{proof}

\begin{thm} 
	For all $ m \geq 0 $, it holds
	\begin{eqnarray*}
		\|\phi(t^n)-\phi_h^n\|_0^2+\|\textbf{u}(t^n)-\textbf{u}_h^n\|_0^2
		+|r(t^n)-r_h^n|^2+\Delta t\sum_{n=0}^m(\|\mu(t^n)-\mu_h^n\|_0^2)
		\leq C(\Delta t^2+h^4),\\
		\Delta t\sum_{n=0}^m\|p_h^n-p(t^n)\|_0^2
		\leq C(\Delta t^2+h^2).
	\end{eqnarray*}
\end{thm}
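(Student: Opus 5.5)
The first estimate should follow from the results already in hand. I would use the splittings $e_{\phi h}^n=\Theta_{\phi h}^n+\Phi_{\phi h}^n$ and $e_{\mu h}^n=\Theta_{\mu h}^n+\Phi_{\mu h}^n$. For the projection parts, \eqref{orth4} together with \Cref{aassume} gives $\|\Theta_{\phi h}^n\|_0^2+\|\Theta_{\mu h}^n\|_0^2\le Ch^4$. Theorem \ref{thm5.5} controls $\|e_{\phi h}^{m+1}\|_0^2$, $(e_{rh}^{m+1})^2$ and the $\mu$-error sum by $C(\Delta t^2+h^4)$. The factor $e^{(\cdot)}$ there is a constant once $d_n$ is bounded and $\Delta t$ is small enough that, say, $d_n\Delta t\le 1/2$. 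The velocity bound $\|e_{uh}^{m+1}\|_0^2\le C(\Delta t^2+h^4)$ is exactly Theorem \ref{3.6}. Summing these gives the first inequality for every $n\le m+1$; the $n=0$ term vanishes by the choice of initial data.

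For the pressure I would use the discrete inf-sup condition \eqref{inf-sup}. I split $e_{ph}^{n+1}=(p(t^{n+1})-\rho_hp(t^{n+1}))+(\rho_hp(t^{n+1})-p_h^{n+1})$, where $\rho_h$ is the $L^2$ projection onto $M_h$, so the first piece is $O(h)$ in $L^2$. Then I apply \eqref{inf-sup} to the discrete piece and read off $(e_{ph}^{n+1},\nabla\cdot\mathbf v_h)$ from (\ref{eequ}c):
\[
\beta\|\rho_hp(t^{n+1})-p_h^{n+1}\|_0\le Ch+\sup_{\mathbf v_h}\frac{(d_te_{uh}^{n+1},\mathbf v_h)+(\nabla e_{uh}^{n+1},\nabla\mathbf v_h)-(R_u,\mathbf v_h)+N^{n+1}(\mathbf v_h)}{\|\nabla\mathbf v_h\|_0},
\]
where $N^{n+1}$ collects the two nonlinear differences (convection and $\mu\nabla\phi$).

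I bound each piece of the supremum as follows.
\begin{itemize}
\item $(d_te_{uh}^{n+1},\mathbf v_h)\le\|d_te_{uh}^{n+1}\|_{-1}\|\nabla\mathbf v_h\|_0$.
\item The viscous term contributes $\|\nabla e_{uh}^{n+1}\|_0$.
\item $R_u$ contributes $\|R_u\|_{-1}$.
\item $N^{n+1}$ is split exactly as in $H_2$ and $H_3$ of Lemma \ref{lemu}. Using \eqref{s1}, \eqref{s3}, Lemma \ref{lem2.1} and the stability Lemmas \ref{lem5.1}--\ref{lem5.3}, it is bounded by
\[
C\bigl(\Delta t+\|e_{\mu h}^n\|_1+\|e_{\phi h}^n\|_1+\|\nabla e_{uh}^n\|_0+|e_{rh}^n|\bigr)\,\|\nabla\mathbf v_h\|_0,
\]
with coefficients such as $\|\mu_h^n\|_1$ and $\|A_h\mathbf u_h^n\|_0$ that are square-summable in time.
\end{itemize}

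Next I square, multiply by $\Delta t$ and sum over $n$. The terms are then controlled as follows.
\begin{itemize}
\item Proposition \ref{prop} bounds $\Delta t\sum(\|\nabla e_{uh}^{n+1}\|_0^2+\|e_{\mu h}^{n+1}\|_1^2)$ and the $H^1$-error of $\phi$ by $C(\Delta t^2+h^2)$.
\item Lemma \ref{truncation} bounds $\Delta t\sum\|R_u\|_{-1}^2$ by $C\Delta t^2$.
\item The weighted products need $\sup_n\|\mu_h^n\|_1^2$ to be bounded. When it is not, I would pair $\Delta t\|\mu_h^n\|_1^2$ with the sup-in-time $H^1$ error bounds from Proposition \ref{prop}.
\item For the velocity-increment term, $\Delta t\sum\|d_te_{uh}^{n+1}\|_{-1}^2=\Delta t^{-1}\sum\|e_{uh}^{n+1}-e_{uh}^n\|_{-1}^2$, and Lemma \ref{lemp} bounds the sum by $C\Delta t(\Delta t^2+h^2)$. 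This gives $C(\Delta t^2+h^2)$, consistent with the stated first-order-in-$h$ pressure rate.
\end{itemize}
The projection part contributes $Ch^2$, completing the pressure bound.

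The main obstacle is the time-derivative term. Theorem \ref{3.6} gives only $\Delta t\sum\|d_te_{uh}\|_{-1}^2\le C(\Delta t^2+h^4)$ in a form that is not uniform enough, so one must use Lemma \ref{lemp} carefully, keeping track of the $\Delta t^{-1}$ factor. A secondary issue is that the nonlinear differences must be estimated in the $H^{-1}$-type dual norm with only $H^1$ control of the errors. Here I would rely on the $L^\infty$ bounds for $\phi_h^n$ (from \eqref{s4} and Lemma \ref{lem5.2}) and for $\mathbf u_h^n$ (from \eqref{s2} and Lemma \ref{lem5.3}), exactly as in the treatment of $G_4$ and $G_5$.
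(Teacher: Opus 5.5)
Your proposal is correct and follows essentially the same route as the paper. The first bound comes from Theorems \ref{thm5.5} and \ref{3.6} with the triangle inequality. The pressure bound comes from the inf-sup condition applied to (\ref{eequ}c), with $\Delta t^{-1}\sum\|e_{uh}^{n+1}-e_{uh}^n\|_{-1}^2$ controlled by Lemma \ref{lemp} and the nonlinear terms handled by the stability lemmas together with the sup-in-time error bounds.

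Your splitting through the $L^2$ projection onto $M_h$ is a small improvement, since the paper applies \eqref{inf-sup} directly to $e_{ph}^{n+1}\notin M_h$. Your worry that Theorem \ref{3.6} is ``not uniform enough'' is unfounded: as stated, it already bounds $\Delta t\sum\|d_te_{uh}^{n+1}\|_{-1}^2$ by $C(\Delta t^2+h^4)$, which would suffice directly.
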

\begin{proof}
	%According to the Theorem \ref{thm5.5}, \ref{3.6} and the triangle inequality, we have
    By Theorem~\ref{thm5.5}, Theorem~\ref{3.6}, and the triangle inequality, we obtain
	\begin{eqnarray*}
		\|\phi(t^n)-\phi_h^n\|_0^2+\|\textbf{u}(t^n)-\textbf{u}_h^n\|_0^2
		+|r(t^n)-r_h^n|^2+\Delta t\sum_{n=0}^m(\|\mu(t^n)-\mu_h^n\|_0^2)
		\leq C(\Delta t^2+h^4).
	\end{eqnarray*}
	By \eqref{eequ}c and the inf-sup condition \eqref{inf-sup}, we obtain
	%From (\ref{eequ}c) with the help of \eqref{inf-sup}, we have
	\begin{eqnarray*}
		\beta\|e_{ph}^{n+1}\|_0&\leq&\frac{(e_{ph}^{n+1},\nabla\cdot \textbf{v}_h)}{\|\nabla \textbf{v}_h\|_0}\\
		&=&\left[ (d_te_{uh}^{n+1},\textbf{v}_h) -(R_u,\textbf{v}_h)
			+(\nabla e_{uh}^{n+1},\nabla \textbf{v}_h)\right.\\
			&&+\left( \frac{r(t^{n+1})}{\sqrt{E_1(t^{n+1})}}\textbf{u}(t^{n+1})\cdot\nabla \textbf{u}(t^{n+1})-\frac{r_h^n}{\sqrt{E_{1,h}^n}}\textbf{u}_h^n\cdot\nabla \textbf{u}_h^n,\textbf{v}_h\right) \\
			&&-\left.
			\left( \frac{r(t^{n+1})}{\sqrt{E_1(t^{n+1})}}\mu(t^{n+1})\nabla\phi(t^{n+1})
			-\frac{r_h^n}{\sqrt{E_{1,h}^n}}\mu_h^n\nabla\phi_h^n,\textbf{v}_h\right) \right]/\|\nabla \textbf{v}_h\|_0\\
			&\leq&\frac{1}{\Delta t}\|e_{uh}^{n+1}-e_{uh}^n\|_{-1}+\|\nabla e_{uh}^{n+1}\|_0+\|R_u\|_{-1}+C\Delta t(\|\mu_t\|_{L^\infty(0,T;H^1(\Omega))}^2\\
&&+\|\phi_t\|_{L^\infty(0,T;H^1(\Omega))}^2+\|\textbf{u}_t\|_{L^\infty(0,T;L^2(\Omega))}^2)(\|\nabla\phi(t^{n+1})\|_0^2+\|\mu(t^n)\|_1^2+\|A\textbf{u}(t^{n+1})\|_0^2)\\
    &&+C\| e_{\mu h}^n\|_1^2
	+C\Delta t(\|\|\mu_h^n\|_1^2\|\nabla\phi_h^n\|_0^2+\|\textbf{u}_h^n\|_0^2\|\nabla\textbf{u}_h^n\|_0^2+\|\phi(t^n)\|_2^2+\|\nabla\textbf{u}(t^n)\|_0^2)(\|e_{\phi h}^n\|_1^2\\
    &&+\|e_{uh}^{n+1}\|_0^2+|e_{rh}^n|^2).
	\end{eqnarray*}
  Multiplying both sides of the inequality by $\Delta t$, summing over $n=0,\dots,m$, and applying Lemma~\ref{lemp}, Lemma~\ref{lem5.1}, and Theorems~\ref{thm5.5}--\ref{3.6}, we complete the proof.
\end{proof}

\section{Numerical experiment}\label{secNum}
	% In this section, we give several examples to test and verify the reliability and efficiency of the proposed numerical scheme for the following CHNS system
In this section, we present several numerical experiments to demonstrate the reliability and efficiency of the proposed numerical scheme for the following CHNS system:
\begin{equation}
\left\{\begin{aligned}
	\phi_t+ \boldsymbol{u}\cdot\nabla\phi -M\Delta \mu& =  0, \quad \text{in}\ \Omega \times [0, T],\\
	\mu+\lambda\Delta\phi-F^{'}(\phi)&=0, \quad \text{in}\ \Omega \times [0, T],\\
	\boldsymbol{u}_t-\nu\Delta \boldsymbol{u} + (\boldsymbol{u}\cdot\nabla)\boldsymbol{u} + \nabla p - \mu \nabla \phi& = \boldsymbol{g}, \quad \text{in}\ \Omega \times [0, T],\\
	\nabla \cdot \boldsymbol{u} &= 0, \quad\, \text{in}\ \Omega \times [0, T],\\
	\nabla \phi\cdot\mathbf{n}=\nabla \mu\cdot\mathbf{n}=0,\mathbf{u} & = \mathbf{0},\quad\, \text{on}\  \partial\Omega\times(0,T],
\end{aligned}
\right.\label{1e1}
\end{equation}
where the corresponding parameters will be specified in each example. The associated Ginzburg--Landau energy is defined as follows:
\begin{equation}\label{1e2}
\widetilde{E}_{h}^{n}(\phi,\boldsymbol{u},r)=\int_{\Omega}\left(\frac{1}{2}|\boldsymbol{u}_{h}^{n}|^{2}+\frac{\lambda}{2}|\nabla \phi_{h}^{n}|^{2}\right) d x+\lambda|r_{h}^{n}|^{2}.
\end{equation}

We first present a numerical example to demonstrate the accuracy of the proposed scheme. Since the exact solution is not available, we use the Cauchy error to assess the convergence behavior. Specifically, the error between two successive mesh sizes $h$ and $h/2$ is defined by
$\|e_{\zeta}\|=\|\zeta_{h}-\zeta_{h/2}\|$.

\begin{example}\cite{LS20201}\label{exm0}
	We consider the CHNS system \eqref{1e1} on the domain $\Omega=[0,1]^2$. The initial conditions are chosen as
	\begin{equation}\begin{aligned}
		\phi_{0} &=  \cos(\pi x)\cos(\pi y), \\
		\boldsymbol{u}_{0}&=  \left(-x^{2}(x-1)^{2}(y-1)(2y-1)y/128,y^{2}(y-1)^{2}(x-1)(2x-1)x/128\right)^{T}, \\
		p_{0} &=  0.
	\end{aligned}\end{equation}
	The external force is taken as $\boldsymbol{g}=(0,0)^{T}$. The parameters are given by %$\boldsymbol{g}=\left(\boldsymbol{0,0}\right)^\mathrm{T}$, and the parameters 
    $\varepsilon^{2}=0.1,\ \lambda=0.1,\ M=0.001,\ \ \nu=0.1,\ \ T=0.1,\ \Delta t=1e-4$.
\end{example}
%\end{example}
\begin{figure}[!htbp]
	\centering
	$\begin{array}{c}
		\includegraphics[width=0.64\textwidth]{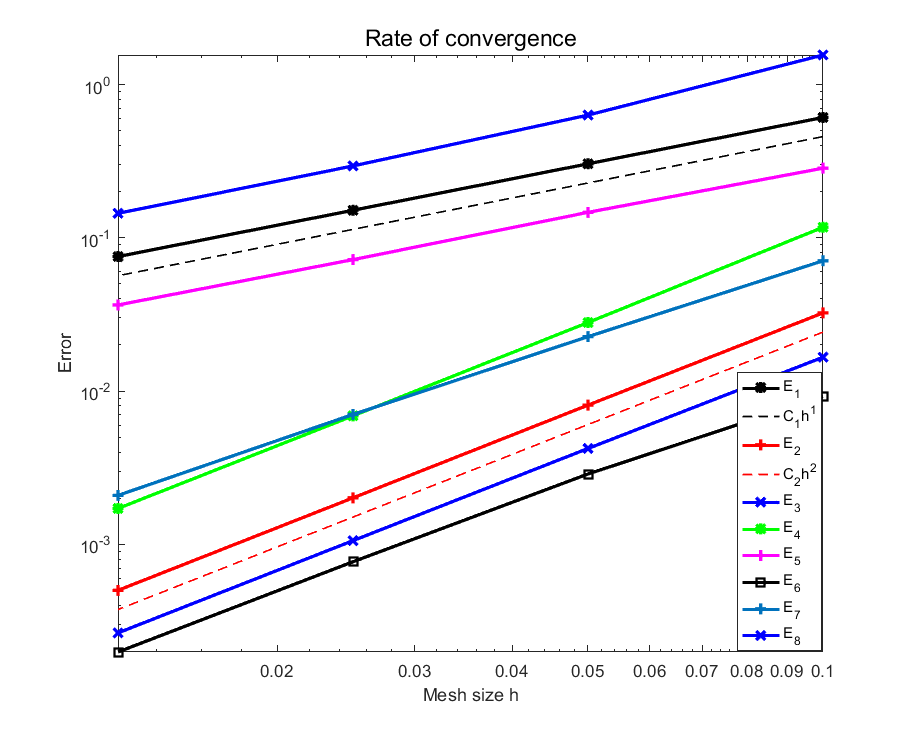}
	\end{array}$
	\caption{Errors and convergent rates.}\label{exp1u1}
\end{figure}

% The errors and convergent rates for spatial discretization between two different grid spacings $h$ and $\frac{h}{2}$ are plotted in Figure \ref{exp1u1}. From the figures, we can see that
The errors and convergence rates for the spatial discretization, obtained using two successive mesh sizes $h$ and $h/2$, are plotted in Figure \ref{exp1u1}. From the figures, we observe that
\[\begin{aligned}
	E_{1}:&=\left\|\phi_{h}^{n+1}-\phi_{h/2}^{n+1}\right\|_{H^{1}}\approx O(h),\qquad\quad
	E_{2}:=\left\|\phi_{h}^{n+1}-\phi_{h/2}^{n+1}\right\|_{L^{2}}\approx O(h^{2}),\\
	E_{3}:&=\left\|\mu_{h}^{n+1}-\mu_{h/2}^{n+1}\right\|_{H^{1}}\approx O(h),\qquad\ \ \
	E_{4}:=\left\|\mu_{h}^{n+1}-\mu_{h/2}^{n+1}\right\|_{L^{2}}\approx O(h^{2}),\\
	E_{5}:&=\left\| \boldsymbol{u}_{h}^{n+1}- \boldsymbol{u}_{h/2}^{n+1}\right\|_{(H^{1})^{2}}\approx O(h),\qquad
	E_{6}:=\left\| \boldsymbol{u}_{h}^{n+1}- \boldsymbol{u}_{h/2}^{n+1}\right\|_{(L^{2})^{2}}\approx O(h^{2}),\\
	E_{7}:&=\left\|p_{h}^{n+1}-p_{h/2}^{n+1}\right\|_{L^{2}}\approx O(h^{2}),\qquad\ \ \ \
	E_{8}:=\left\|r_{h}^{n+1}-r_{h/2}^{n+1}\right\|_{L^{2}}\approx O(h^{2}).
\end{aligned}\]

\begin{example}\cite{F2006}\label{exm1} 
% In the second example, we consider the CHNS equations \eqref{1e1} with the domain $\Omega=[-0.4,0.4]^{2}$, the parameters $\varepsilon=0.01,\ \lambda=1,\ M=0.1,\ \mu=1,\ \nu=1,\ \gamma=1$ and the right term $\boldsymbol{g}=\left(\boldsymbol{1,0}\right)^\mathrm{T}$. For the boundary conditions, we take the homogeneous Dirichlet boundary condition for velocity, and the homogeneous Neumann boundary conditions for $\phi$ and $w$. The initial conditions of velocity and pressure are both set to be zero, and the initial profile of the phase field function is taken as following
% 	\begin{equation}\label{1e3}
% 	\phi_{0}=\tanh\left(\frac{x^{2}}{0.01}+\frac{y^{2}}{0.0225}-1\right).
% 	\end{equation}
    In the second example, we consider the CHNS system \eqref{1e1} on the domain $\Omega=[-0.4,0.4]^2$. The parameters are set to $\varepsilon=0.01$, $\lambda=1$, $M=0.1$, $\nu=1$ and the external force is given by $\mathbf{g}=(1,0)^T$. We impose homogeneous Dirichlet boundary conditions for the velocity and homogeneous Neumann boundary conditions for $\phi$ and $w$. The initial velocity and pressure are both taken to be zero, while the initial phase-field profile is given by
\begin{equation}\label{1e3}
\phi_{0}=\tanh\left(\frac{x^{2}}{0.01}+\frac{y^{2}}{0.0225}-1\right).
\end{equation}
\end{example}

\begin{figure}[!htpb]
\centering
\includegraphics[width=0.325\textwidth]{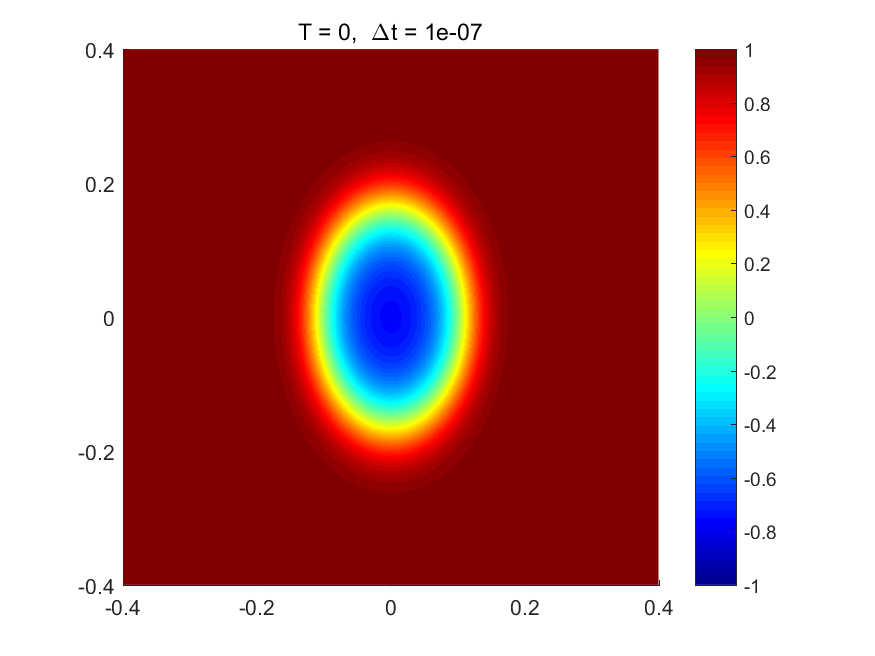}
\hfill
\includegraphics[width=0.325\textwidth]{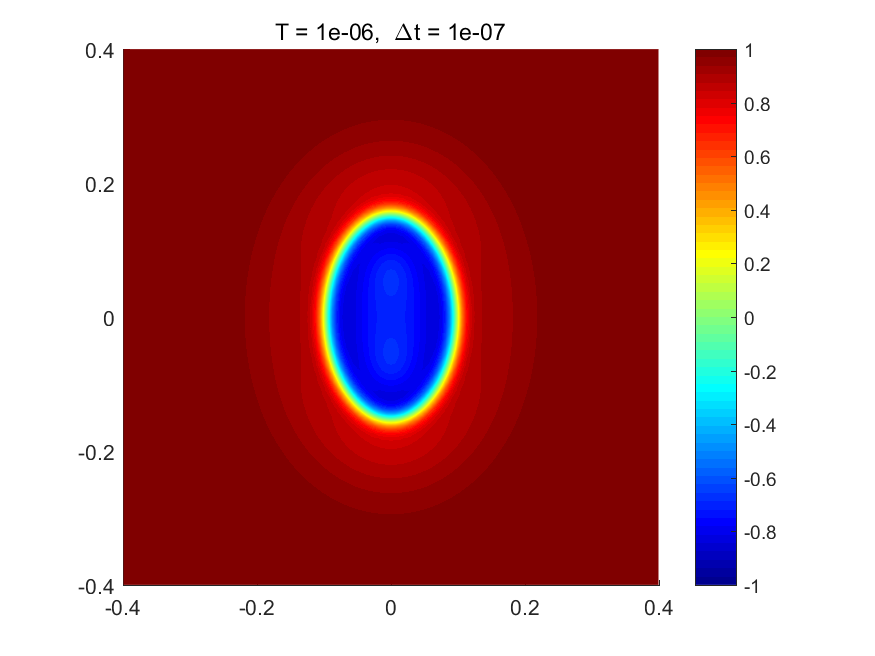}
\hfill
\includegraphics[width=0.325\textwidth]{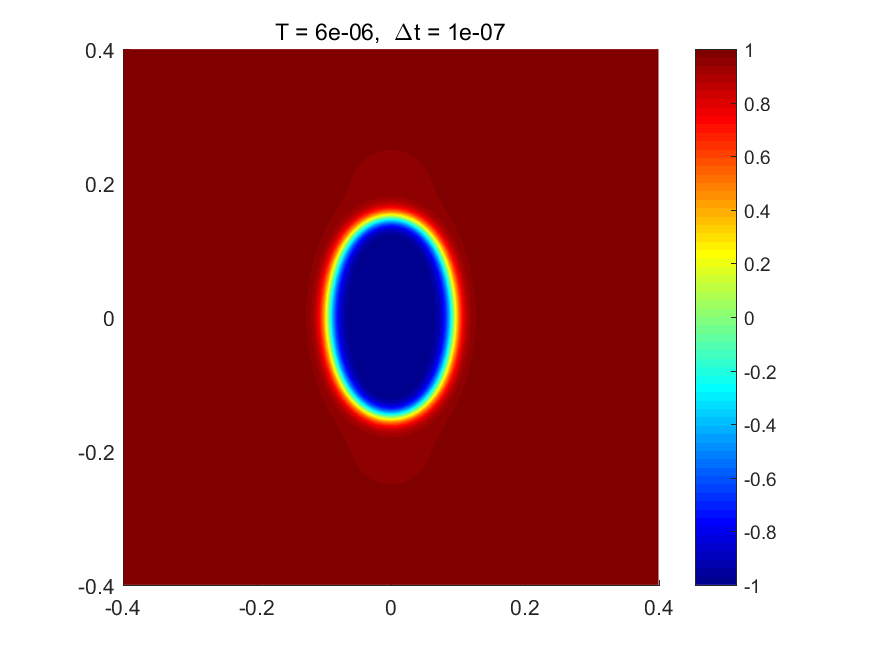}

\vspace{0.3cm} % 控制两行图之间的间距
\includegraphics[width=0.325\textwidth]{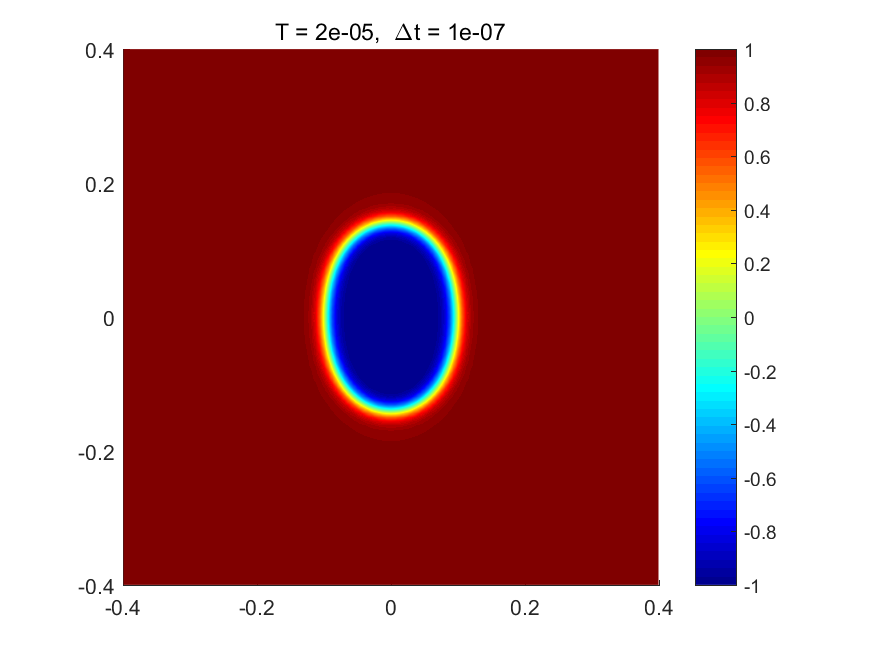}
\hfill
\includegraphics[width=0.325\textwidth]{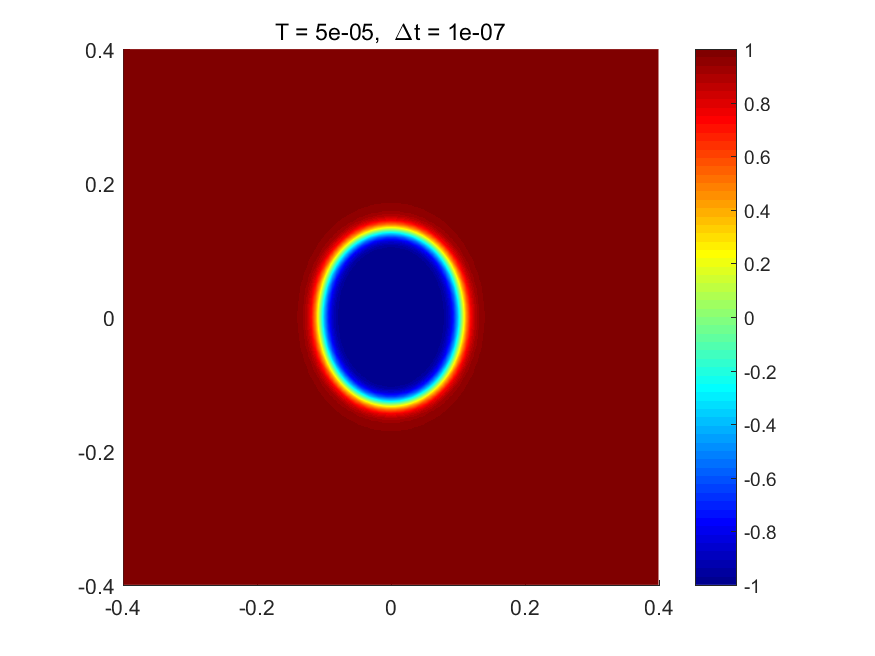}
\hfill
\includegraphics[width=0.325\textwidth]{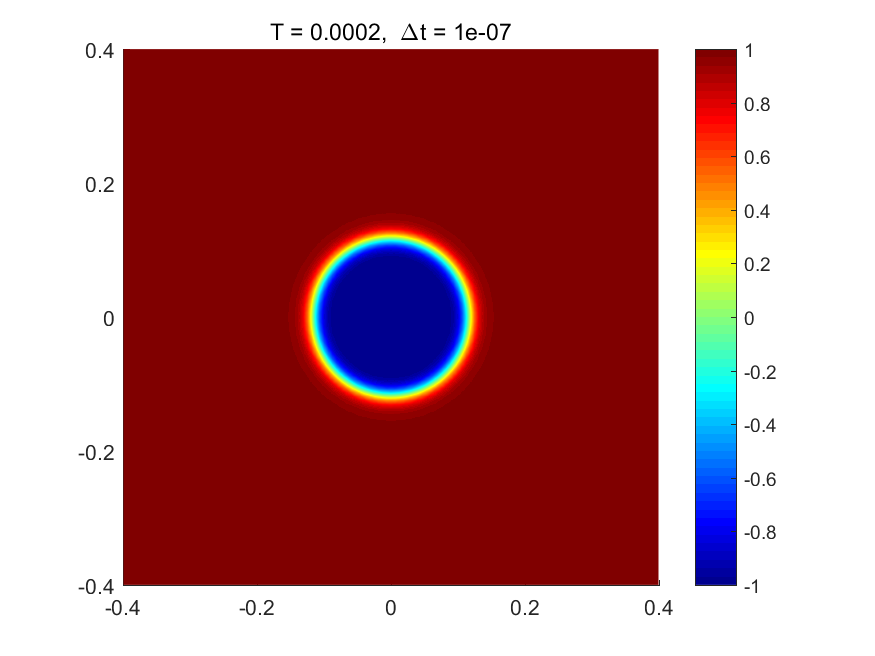}

\caption{Example \ref{exm1}, snapshots of numerical solutions for phase field function.}
\label{exp3u}
\end{figure}
%
% The snapshots of the numerical solutions, which are based on the proposed numerical scheme, for phase field function at six different time steps are shown in {\bf{Figure} \ref{exp3u}} for {\bf{Example} \ref{exm1}}. As it is shown, we can see the initially elliptical interface develops circular shapes gradually.
The snapshots of the numerical solutions obtained using the proposed scheme for the phase-field function at six different time instants are presented in {{Figure} \ref{exp3u}} for {\bf{Example} \ref{exm1}}. As shown in the figures, the initially elliptical interface gradually evolves into a more circular shape over time.

\begin{example}\cite{LS20201}\label{exm2} 
% In this example, we consider the CHNS equations \eqref{1e1} with the domain $\Omega=[0,1]^{2}$, the parameters $\varepsilon=0.01,\ \lambda=0.01,\ M=0.002,\ \mu=1,\ \nu=1,\ \gamma=1$ and the right term $\boldsymbol{g}=\left(\boldsymbol{0,0}\right)^\mathrm{T}$. The initial conditions of velocity and pressure are both set to be zero, and the initial profile of the phase field function is chosen as to be a rectangular bubble, i.e.,
% 	\begin{equation}\label{1e4}
% 	\phi_{0}(x,y)=\left\{\begin{aligned}
% 		1\qquad & if\ 0.25\leq x\leq 0.75  \ \& \ 0.25\leq y\leq 0.75 ,\\
% 		-1\qquad & if\ otherwise.
% 	\end{aligned}
% 	\right.
% 	\end{equation}
% 	For the boundary conditions, we take homogeneous Neumann boundary for $\phi$ and $w$, and boundary condition for velocity is zero.
In this example, we consider the CHNS system \eqref{1e1} on the domain $\Omega=[0,1]^2$. The parameters are chosen as $\varepsilon=0.01$, $\lambda=0.01$, $M=0.002$, $\nu=1$ and the external force is given by $\mathbf{g}=(0,0)^T$. The initial velocity and pressure are both taken to be zero, while the initial phase-field profile is prescribed as a rectangular bubble:
	\begin{equation}\label{1e4}
	\phi_{0}(x,y)=\left\{\begin{aligned}
		1\qquad & if\ 0.25\leq x\leq 0.75  \ \& \ 0.25\leq y\leq 0.75 ,\\
		-1\qquad & if\ otherwise.
	\end{aligned}
	\right.
	\end{equation}
For the boundary conditions, homogeneous Neumann boundary conditions are imposed for $\phi$ and $w$, while homogeneous Dirichlet boundary conditions are applied to the velocity field.
\end{example}

\begin{figure}[!htpb]
\centering
\includegraphics[width=0.325\textwidth]{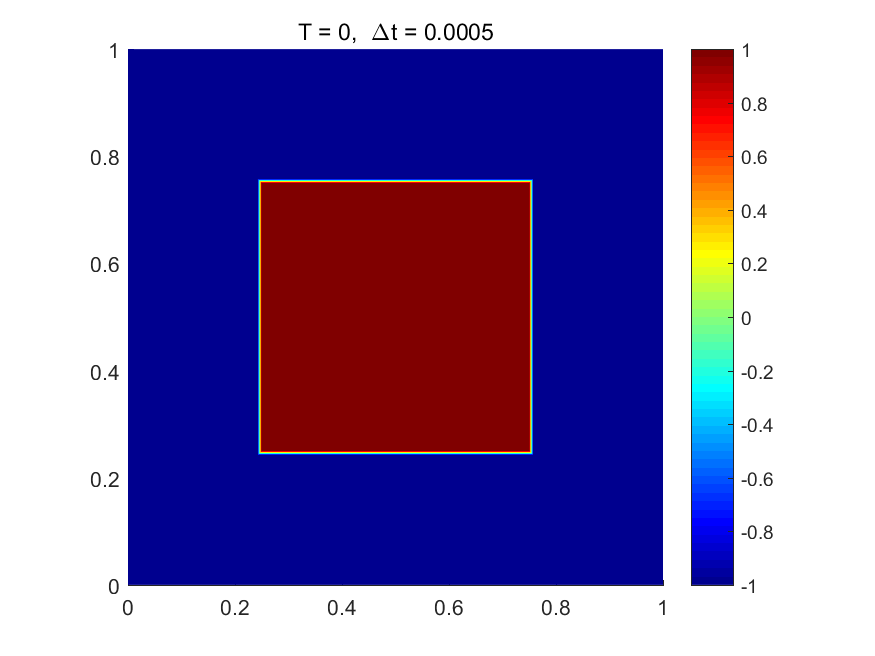}
\hfill
\includegraphics[width=0.325\textwidth]{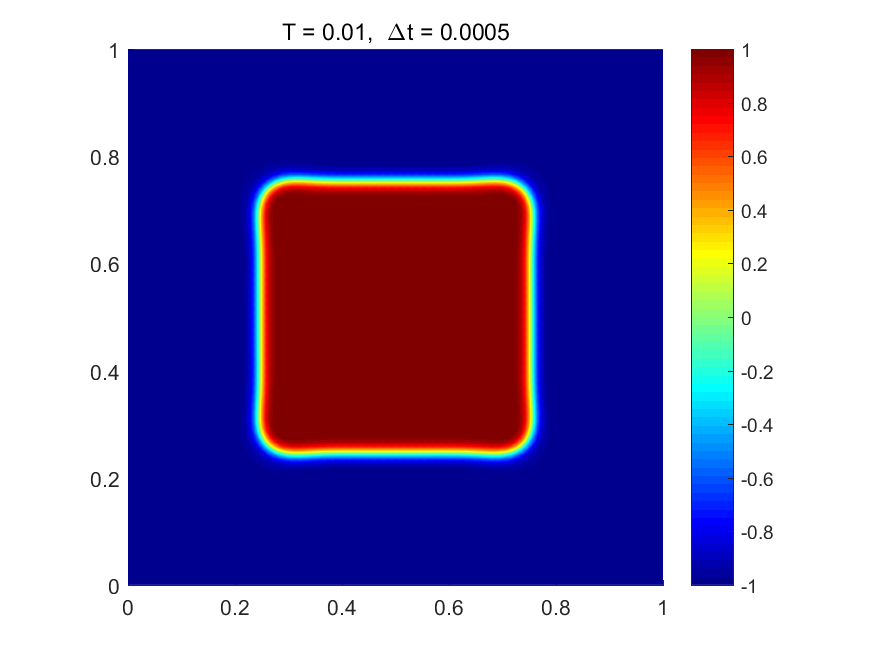}
\hfill
\includegraphics[width=0.325\textwidth]{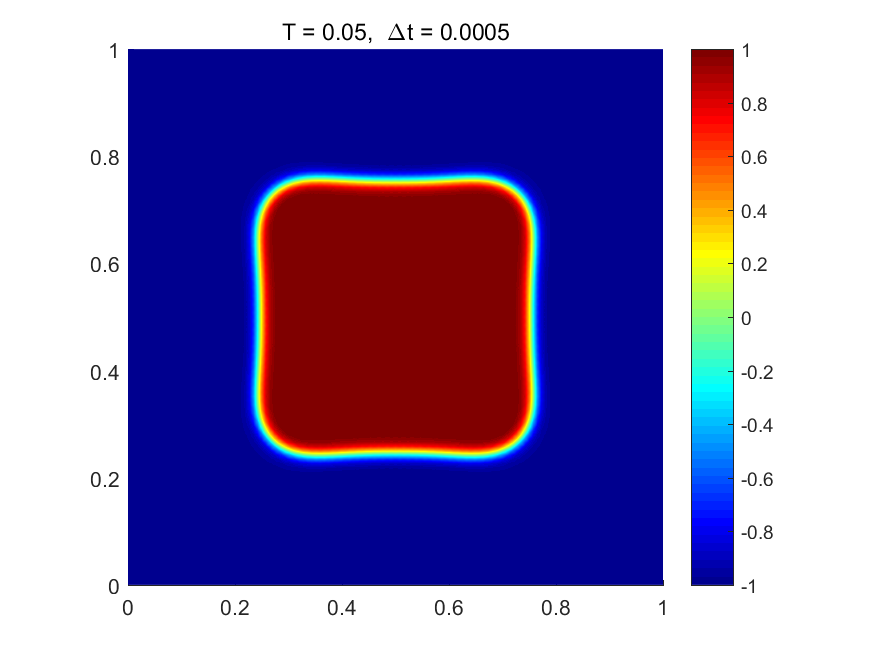}

\vspace{0.3cm} % 行间距
\includegraphics[width=0.325\textwidth]{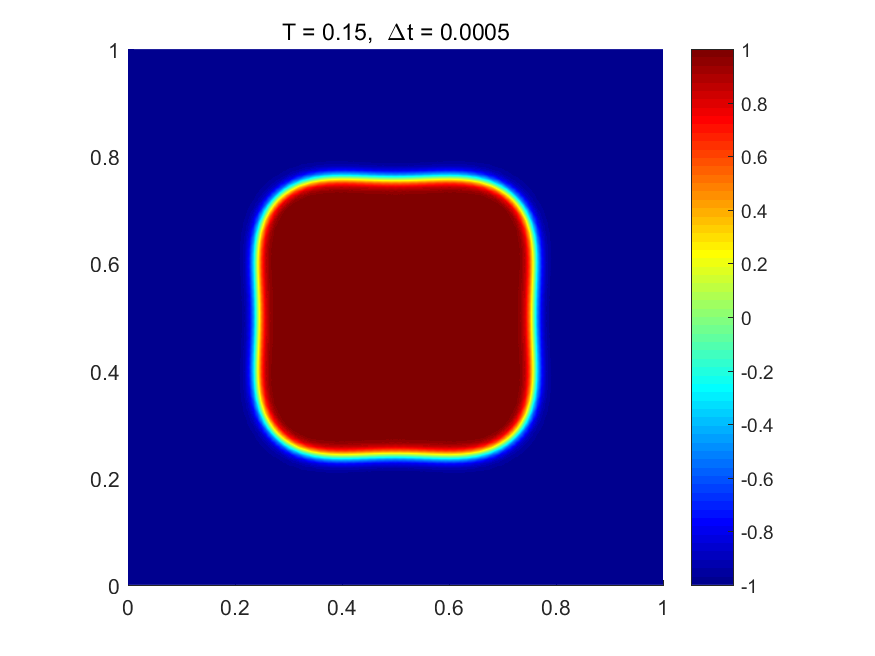}
\hfill
\includegraphics[width=0.325\textwidth]{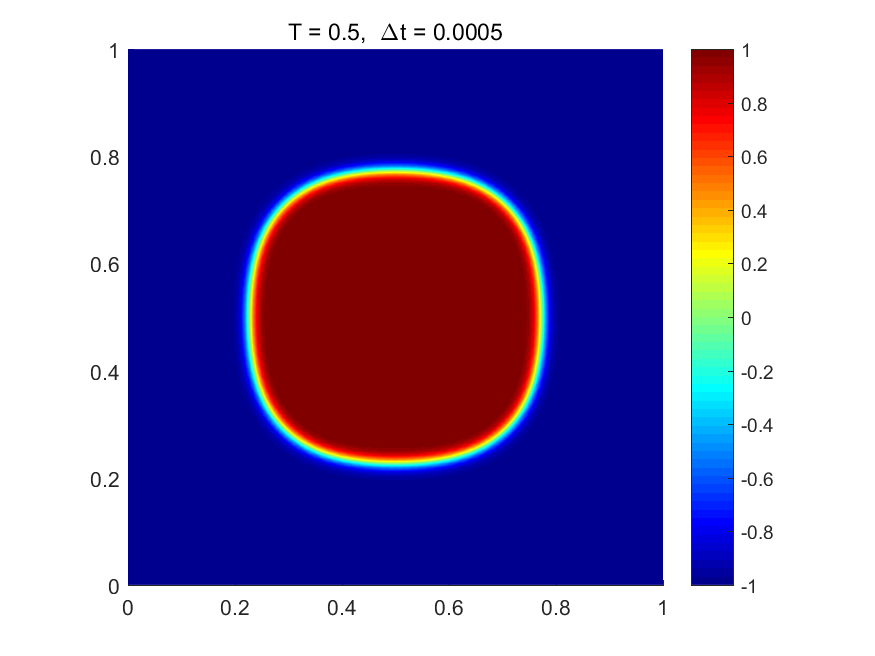}
\hfill
\includegraphics[width=0.325\textwidth]{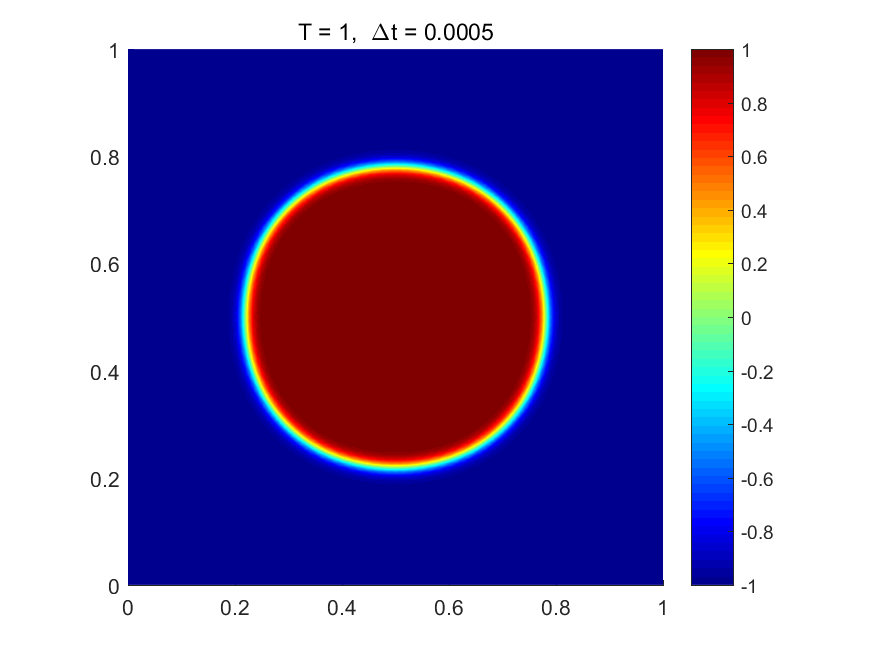}

\caption{Example \ref{exm2}: snapshots of numerical solutions for phase field function.}
\label{exp2u}
\end{figure}

% A sequence snapshots of the numerical solutions for phase field function are produced in view of the proposed scheme in Figure \ref{exp2u} for {\bf{Example} \ref{exm2}}. It can be clearly seen that the interface shape gradually becomes circular over time. 
A sequence of snapshots of the numerical solutions for the phase-field function is presented in Figure \ref{exp2u} for {\bf{Example} \ref{exm2}}, based on the proposed scheme. It can be clearly observed that the interface gradually evolves toward a circular shape over time.

\begin{example}\label{exm4} 
% In the forth example, we consider the CHNS equations \eqref{1e1} with a random phase function in the domain $[-1,1]\times [-1,1]$ to simulate the coarsening dynamics. The initial conditions of velocity and pressure are both set to be zero, and the initial profile of the phase field function is taken as
% 	\begin{equation}\label{1e6}
% 	\phi_{0}(x,y)=rand(x,y)
% 	\end{equation}
% 	with rand(x, y) generates random values between -1 and 1. For the boundary conditions, we take homogeneous Neumann boundary for $\phi$ and $w$, and boundary condition for velocity is zero. And the parameters $\varepsilon=0.02,\ \lambda=0.01,\ M=0.002,\ \mu=1,\ \nu=1,\ \gamma=1$ and the right term $\boldsymbol{g}=\left(\boldsymbol{0,0}\right)^\mathrm{T}$.
In the fourth example, we consider the CHNS equations \eqref{1e1} on the domain $[-1,1]\times[-1,1]$ with a random initial phase-field function to simulate coarsening dynamics. The initial velocity and pressure are both set to zero, while the initial profile of the phase-field function is given by
\begin{equation}\label{1e6}
\phi_{0}(x,y)=\mathrm{rand}(x,y),
\end{equation}
where $\mathrm{rand}(x,y)$ generates random values uniformly distributed in $[-1,1]$. For the boundary conditions, homogeneous Neumann boundary conditions are imposed for $\phi$ and $w$, while no-slip (zero) boundary conditions are prescribed for the velocity field. The parameters are chosen as $\varepsilon=0.02$, $\lambda=0.01$, $M=0.002$, $\nu=1$, and the external force is set to $\boldsymbol{g}=(0,0)^{\mathrm{T}}$.
\end{example}

\begin{figure}[!htpb]
\centering
\includegraphics[width=0.325\textwidth]{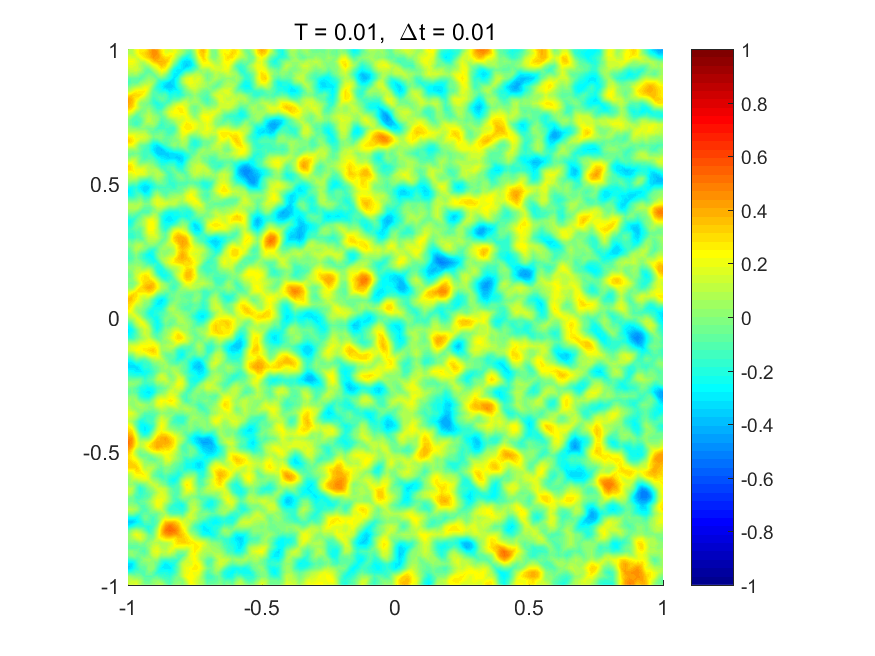}
\hfill
\includegraphics[width=0.325\textwidth]{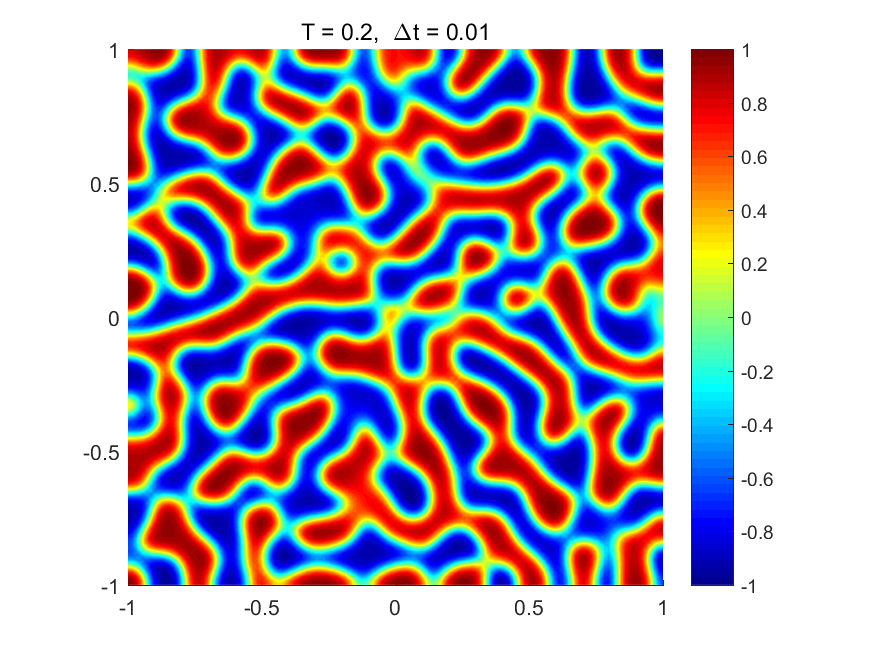}
\hfill
\includegraphics[width=0.325\textwidth]{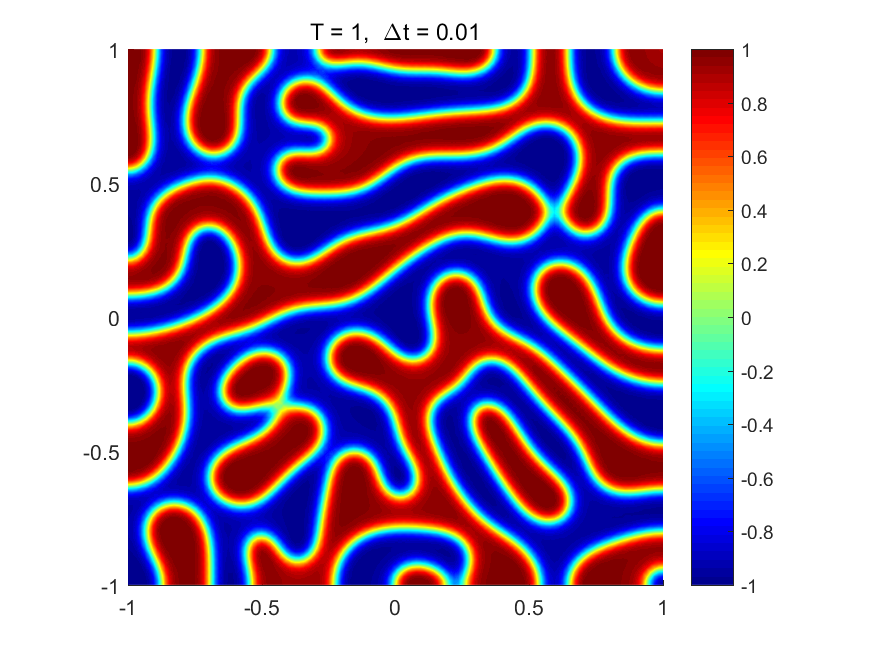}

\vspace{0.3cm} % 控制两行图之间的间距
\includegraphics[width=0.325\textwidth]{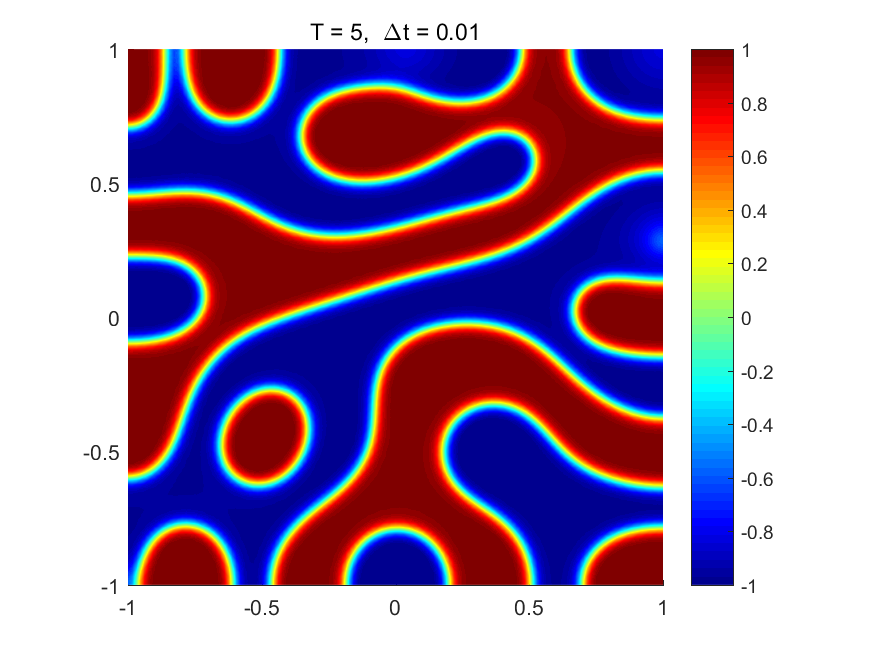}
\hfill
\includegraphics[width=0.325\textwidth]{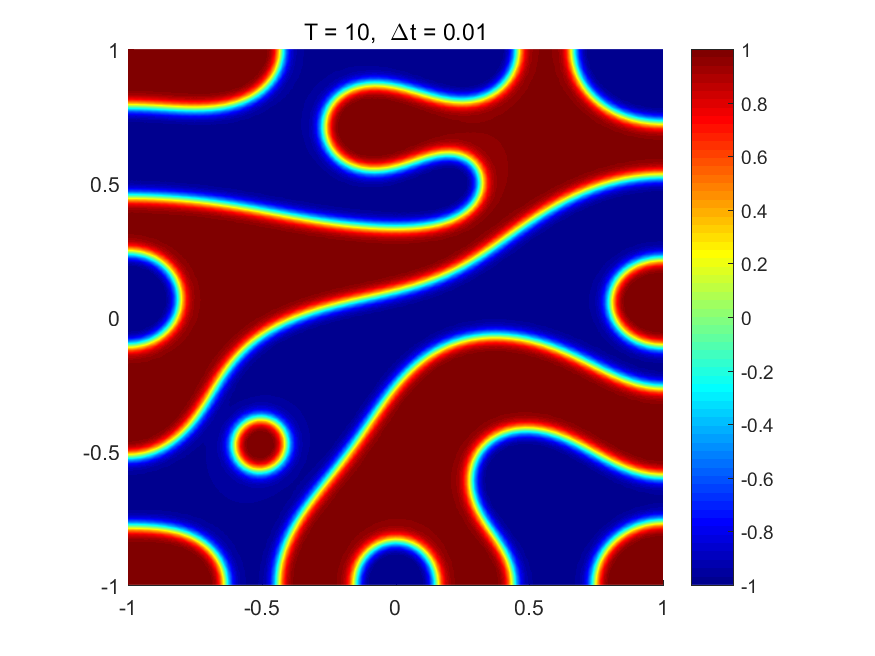}
\hfill
\includegraphics[width=0.325\textwidth]{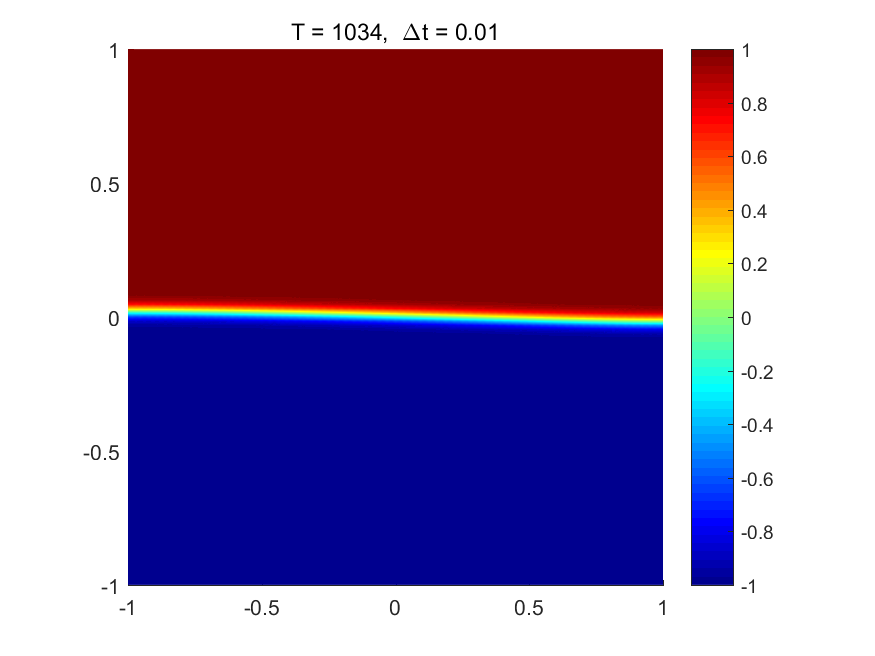}

\caption{Example \ref{exm4}: snapshots of numerical solutions for phase field function.}
\label{exp4u}
\end{figure}

% A sequence snapshots of the numerical solutions for phase field function are produced in view of the proposed scheme in Figure \ref{exp4u} for {\bf{Example} \ref{exm4}}.  We see clearly that the coarsening phenomena.
A sequence of snapshots of the numerical solutions for the phase-field function is presented in Figure \ref{exp4u} for {\bf{Example} \ref{exm4}}, based on the proposed scheme. It can be clearly observed that coarsening phenomena occur over time.

\begin{figure}[!htbp]
\centering
\includegraphics[width=0.325\textwidth]{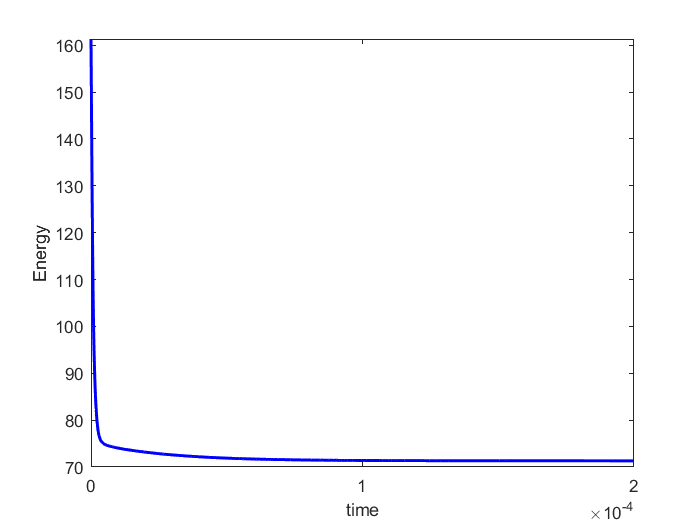}
\hfill
\includegraphics[width=0.325\textwidth]{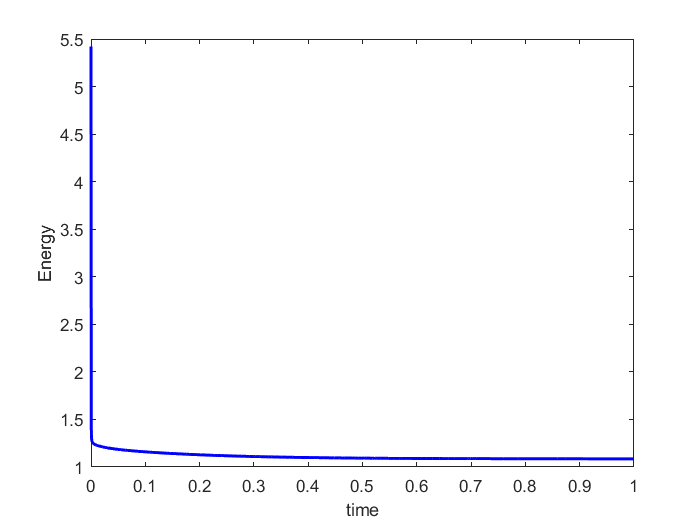}
\hfill
\includegraphics[width=0.325\textwidth]{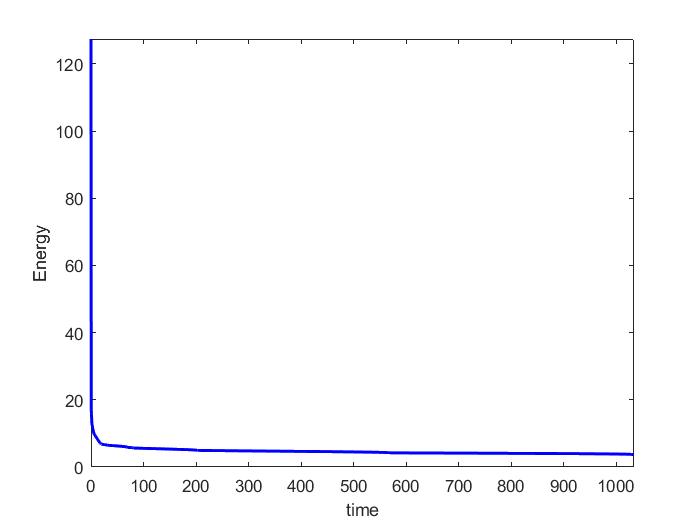}
\caption{The modified discrete energy history,
First line: Example \ref{exm1}; Second line, Left: Example \ref{exm2}; Right: Example \ref{exm4}.}
\label{exp1u0}
\end{figure}

% At last, the time history curves of the modified discrete energy defined in \eqref{1e2} for {\bf{Example} \ref{exm1}-\ref{exm4}} are displayed in {\bf{Figure} \ref{exp1u0}}. It shows that the modified energy decreases on every time step.
Finally, the time-history curves of the modified discrete energy defined in \eqref{1e2} for {\bf{Examples} \ref{exm1}-\ref{exm4}} are presented in Figure \ref{exp1u0}. It can be observed that the modified energy decreases monotonically at each time step.

\section*{Conclusion}\label{secCon}

In this work, we develop a linear, fully decoupled, and unconditionally energy-stable fully discrete finite element scheme for the CHNS system by combining the SAV approach with an IMEX Euler time discretization. Unlike existing SAV-based schemes, which typically require multiple auxiliary variables or additional techniques to handle different nonlinearities, the proposed method introduces only a single scalar auxiliary variable together with a novel update equation for the auxiliary variable. This design yields an efficient decoupled solver structure while preserving the unconditional energy stability of the system.
We established the unconditional discrete energy dissipation law and stability estimates of the scheme, and further derived optimal-order error estimates for the fully discrete finite element approximation. The main difficulty caused by the non-globally Lipschitz nonlinear potential was overcome by proving the uniform boundedness of the numerical solution, which enables the error analysis without additional assumptions on the free energy. Numerical experiments confirmed the theoretical convergence rates and demonstrated the monotonic decay of the modified discrete energy.

Future work will focus on extending the proposed framework to higher-order time discretizations and to more physically realistic CHNS models with variable density and variable viscosity while maintaining unconditional energy stability and efficient decoupling.

\section*{Funding}
J. Yang was supported by the Fundamental Research Funds for the Universities of Henan Province under Grant No. NSFRF2502090, and Natural Science Foundation of Henan Province under Grant No. 262300422624.
N. Yi was supported by the National Key R \& D Program of China (2024YFA1012600) and the NSFC Project (12431014)
P. Yin’s research was supported by the University of Texas at El Paso Startup Award.

\bibliographystyle{abbrv}
\bibliography{ref}

\end{document}